\DeclareMathOperator{\Ad}{Ad}
\DeclareMathOperator{\ad}{ad}
\DeclareMathOperator{\Aut}{Aut}
\DeclareMathOperator{\Ric}{Ric}
\newcommand{\fr}{\mathfrak}
\newcommand{\al}{\alpha}
\newcommand{\be}{\beta}
\newcommand{\thickline}{\noalign{\hrule height 1pt}}
\newtheorem{theorem}{Theorem}
\newtheorem{lemma}{Lemma}
\newtheorem{prop}{Proposition}
\begin{document}

\title
{Complete description of invariant Einstein metrics on the generalized flag manifold $SO(2n)/U(p)\times U(n-p)$}

\author{Andreas Arvanitoyeorgos, Ioannis Chrysikos and Yusuke Sakane}
\address{University of Patras, Department of Mathematics, GR-26500 Rion, Greece}
\email{arvanito@math.upatras.gr}
\email{xrysikos@master.math.upatras.gr}
\address{Osaka University, Department of Pure and Applied Mathematics, Graduate School of Information and Technology,
Osaka 560-043, Japan}
\email{sakane@math.sci.osaka-u.ac.jp}
 
\medskip
\noindent
\thanks{The first two authors  were partially supported
  by the C.~Carath\'{e}odory grant \#C.161 2007-10,
  University of Patras and the third auther was supported by Grant-in-Aid
for Scientific Research (C) 21540080}

\begin{abstract}
 We find the precise number of non-K\"ahler $SO(2n)$-invariant Einstein metrics on the generalized flag manifold
$M=SO(2n)/U(p)\times U(n-p)$ with $n\geq 4$ and $2\leq p\leq n-2$.
We use an analysis on parametric systems of polynomial equations and we give some insight towards the study of such systems. 
  We also examine the isometric problem 
 for these Einstein metrics.

 \medskip
\noindent 2000 {\it Mathematics Subject Classification.} Primary 53C25; Secondary 53C30, 12D05, 65H10

\medskip
\noindent {\it Keywords}:  homogeneous  manifold, Einstein metric,  generalized flag manifold,  algebraic systems
of equations, resultant.

\end{abstract}

\maketitle

\section*{Introduction}
\markboth{Andreas Arvanitoyeorgos, Ioannis Chrysikos and Yusuke Sakane}{Invariant Einstein metrics on the generalized flag manifold $SO(2n)/U(p)\times U(n-p)$}

A Riemannian metric $g$    is called {\it Einstein} if the Ricci tensor
 $\Ric_{g}$ satisfies the equation  
 ${\rm  Ric}_{g}=e\cdot g$, 
  for some $e\in\mathbb{R}$.     When $M$ is compact, 
  Einstein metrics of   volume 1 
  can be characterized variationally  as the critical 
  points of the scalar curvature functional 
    $T(g)=\int_{M}S_{g}d {\rm vol}_{g}$ 
  on the space  $\mathcal{M}_{1}$ of Riemannian metrics of 
  volume 1. If  $M=G/K$  is a compact homogeneous space, a $G$-invariant
Einstein metric  is precisely a critical 
point  of $T$ restricted to
the set  of $G$-invariant metrics of volume 1.
As a consequence, the Einstein equation reduces to a system 
of non-linear algebraic equations, 
which is still very complicated but more manageable, 
and in some times can be solved explicity. 
 Thus most known examples of Einstein manifolds are homogeneous.

  In a recent work \cite{Chry3} the first two authors classified 
  all flag manifolds for which the isotropy representation decomposes 
  into four pairwise inequivalent irreducible submodules, and found new 
  invariant Einstein metrics on these spaces.  Recall that a generalized 
  flag manifold is an adjoint orbit  of a compact semisimple Lie group $G$, 
  or equivalently a compact homogeneous space  of the form  $M=G/K=G/ C(S)$, 
   where $C(S)$ is the centralizer  of a torus $S$ in $G$.

Eventhough the problem of finding all invariant 
Einstein metrics on $M$  can be facilitated by use of certain
theoretical results (e.g. the work \cite{Gr} on 
the total number of $G$-invariant complex Einstein metrics),
it still remains a difficult one, especially when the number 
of isotropy summands increases.  This difficulty also increases 
when we pass from exceptional flag manifolds to classical flag 
manifolds, because in the later case the Einstein equation reduces 
to a parametric system.   In  particular,  eventhough all invariant 
Einstein metrics were found for every generalized flag manifold with
 four isotropy summands,
a partial answer was given for the spaces $SO(2n)/U(p)\times U(n-p)$ 
 and $Sp(n)/U(p)\times U(n-p)$.

We summarize  the results obtained in \cite{Chry3} about these spaces.

\begin{theorem}\label{The1}{\textnormal{(\cite{Chry3})}}
The flag manifold $SO(2n)/U(p)\times U(n-p)$ ($n\geq 4$ and $2\leq p\leq n-2$)  
    admits at least six 
    $SO(2n)$-invariant Einstein metrics.  There are  two   non-K\"ahler 
     Einstein metrics and  two pairs of isometric K\"ahler-Einstein metrics.
\end{theorem}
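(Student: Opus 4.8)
The plan is to pass to the Lie-theoretic model of $M=SO(2n)/U(p)\times U(n-p)$ as a generalized flag manifold $G/K=G/C(S)$, obtained by painting two nodes of the Dynkin diagram of $\mathfrak{so}(2n,\mathbb{C})$, so that $b_2(M)=2$. From the associated set of $T$-roots one reads off that the isotropy representation decomposes into four pairwise inequivalent irreducible $\Ad(K)$-submodules $\mathfrak{m}=\mathfrak{m}_1\oplus\mathfrak{m}_2\oplus\mathfrak{m}_3\oplus\mathfrak{m}_4$, whose dimensions $d_i=\dim\mathfrak{m}_i$ are explicit polynomials in $n$ and $p$. A general $SO(2n)$-invariant Riemannian metric is then $g=x_1(-B)|_{\mathfrak{m}_1}+x_2(-B)|_{\mathfrak{m}_2}+x_3(-B)|_{\mathfrak{m}_3}+x_4(-B)|_{\mathfrak{m}_4}$ with $x_i>0$, where $B$ is the Killing form of $\mathfrak{so}(2n)$.

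Next I would record the nonzero structure constants (symmetric triples) $\bigl[{}^{\,k}_{ij}\bigr]$ of this decomposition; for $b_2(M)=2$ only a short list of them is nonzero, governed by which sums of $T$-roots are again $T$-roots, and they can be computed uniformly in $n,p$. Substituting these into the standard Wang--Ziller formula for the Ricci tensor of an invariant metric produces the four components $r_1,r_2,r_3,r_4$ as explicit rational functions of $x_1,\dots,x_4$ with coefficients depending on $n,p$. Imposing the Einstein condition $r_1=r_2=r_3=r_4$ and normalizing (say $x_4=1$) turns this into a parametric system of polynomial equations in $x_1,x_2,x_3$, whose positive real solutions correspond, up to scale, to the invariant Einstein metrics on $M$.

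For the Kähler part I would avoid solving the system directly and instead invoke the general theory of Kähler--Einstein metrics on flag manifolds: each invariant complex structure of $M$ — coming from the choices of signs on the two painted simple roots, which for this space yields four inequivalent ones up to conjugation — carries, by Matsushima's theorem, a unique compatible invariant Kähler--Einstein metric, whose components $x_i$ are given in closed form as normalized sums of the Koszul numbers of the roots spanning $\mathfrak{m}_i$. This yields the four Kähler--Einstein solutions. To see that they split into two pairs of isometric metrics I would exhibit, for each pair, an automorphism of $\mathfrak{so}(2n)$ — a suitable Weyl group element of $D_n$, together where needed with the outer automorphism interchanging the two spinor nodes — that normalizes $\mathfrak{k}$ and swaps the two complex structures, hence maps one Kähler--Einstein metric isometrically onto the other.

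Finally, for the two non-Kähler Einstein metrics I would look for solutions of the polynomial system that are not of Kähler type, the efficient route being a symmetry ansatz: when the modules interchanged by the residual involution (available since $p$ and $n-p$ enter symmetrically after relabelling) are matched, one sets the corresponding $x_i$ equal and the system collapses to essentially one polynomial equation in one variable, still with coefficients depending on $n,p$. One then checks that this equation has at least two positive real roots distinct from the Kähler values for every admissible $(n,p)$ with $n\geq 4$, $2\leq p\leq n-2$. The main obstacle is exactly this last step: eliminating variables from a genuinely parametric nonlinear system and certifying the reality, positivity and non-Kähler nature of the relevant solutions uniformly in the parameters. This is where resultants and a careful analysis of the resulting discriminant enter, and it is precisely the analysis that the present paper pushes further in order to determine the \emph{exact} number of non-Kähler invariant Einstein metrics.
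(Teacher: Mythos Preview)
Your outline is essentially the approach of \cite{Chry3}, which this paper recalls and then sharpens. Two small corrections are worth making. First, the involution relevant for the non-K\"ahler ansatz is not the $p\leftrightarrow n-p$ swap but the interchange $\mathfrak{m}_1\leftrightarrow\mathfrak{m}_3$: one has $d_1=d_3=2p(n-p)$ and the only nonzero triples $[123]$, $[134]$ are symmetric in the indices $1,3$, so $r_1-r_3$ factors with $(x_1-x_3)$ as a factor. Setting $x_1=x_3=1$ then collapses the Einstein system to two equations in $x_2,x_4$; eliminating $x_4$ yields the quartic $H_{n,p}(x_2)$, and an elementary intermediate-value argument (evaluating at $\tfrac{2(n-p-1)}{n+p-1}$, $\tfrac{2(n-p-1)}{n}$, and $2$) already produces two positive roots---no resultants or discriminants are needed for the ``at least two'' assertion of this theorem. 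Second, your K\"ahler count is right in spirit: the $T$-root system here is of type $B_2$, so there are four K\"ahler--Einstein solutions in the parameter space, and the same $\mathfrak{m}_1\leftrightarrow\mathfrak{m}_3$ isometry (not an outer automorphism) pairs them into two isometric pairs; equivalently there are two inequivalent invariant complex structures in Nishiyama's sense. The resultant and discriminant analysis you allude to is indeed what the present paper deploys, but for the complementary task of excluding further solutions in Case~B and determining the \emph{exact} root count of $H_{n,p}$ in Case~A, not for the lower bound of Theorem~\ref{The1}.
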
 

\begin{theorem}\label{The2}{\textnormal{(\cite{Chry3})}}
The flag manifold $Sp(n)/U(p)\times U(n-p)$ ($n\geq 2$ and $1\leq p\leq n-1$)  
    admits at least four 
    $Sp(n)$-invariant Einstein metrics, which are K\"ahler. 
\end{theorem}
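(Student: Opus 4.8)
The plan is to exhibit four invariant K\"ahler--Einstein metrics on $Sp(n)/U(p)\times U(n-p)$; since the statement only asserts their existence (``at least four''), this suffices. Recall the structure theory of invariant K\"ahler metrics on a generalized flag manifold $G/K=G/C(S)$: by classical results (Koszul, Matsushima), each $G$-invariant complex structure $J$ on $G/K$ carries a unique compatible $G$-invariant K\"ahler--Einstein metric, with positive Einstein constant, and these exhaust the invariant K\"ahler--Einstein metrics; moreover the invariant complex structures correspond bijectively to the Weyl chambers of the ``$\mathfrak{t}$-root system'' of $G/K$ (the restricted roots of $\mathfrak{g}$ on the centre $\mathfrak{t}=\mathfrak{z}(\mathfrak{k})$), equivalently to the admissible systems of positive $\mathfrak{t}$-roots. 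So the first step is to determine this $\mathfrak{t}$-root system for $N=Sp(n)/U(p)\times U(n-p)$.

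To carry this out I would realize $K=U(p)\times U(n-p)$ inside $Sp(n)$ as the centralizer of a two-torus obtained by ``painting'' the simple roots $\al_p$ and $\al_n$ in the Dynkin diagram of $C_n=\{\pm e_i\pm e_j\}\cup\{\pm 2e_i\}$: painting $\al_n$ alone yields $U(n)\subset Sp(n)$, and painting in addition $\al_p$ (a node of the $A_{n-1}$ part) breaks $U(n)$ into $U(p)\times U(n-p)$. Writing the roots of $C_n$ in the basis of simple roots and reading off the pair formed by the coefficients of $\al_p$ and $\al_n$, one finds that the roots outside $\mathfrak{k}$ project onto exactly four positive $\mathfrak{t}$-roots $\xi_1,\ \xi_2,\ \xi_1+\xi_2,\ 2\xi_1+\xi_2$ --- the positive roots of the rank two root system $C_2=B_2$. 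Hence the isotropy representation of $N$ splits into four pairwise inequivalent irreducible $\Ad(K)$-submodules $\mathfrak{n}=\mathfrak{n}_1\oplus\mathfrak{n}_2\oplus\mathfrak{n}_3\oplus\mathfrak{n}_4$, indexed by these $\mathfrak{t}$-roots (with the brackets $[\mathfrak{n}_i,\mathfrak{n}_j]\subseteq\mathfrak{n}_k$ governed by $\mathfrak{t}$-root addition), and $N$ admits exactly $|W(C_2)|=8$ invariant complex structures $J_1,\dots,J_8$, one per chamber.

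Each $J_a$ gives an invariant K\"ahler--Einstein metric $g_a=\sum_i x_i^{(a)}(\,\cdot\,|\,\cdot\,)|_{\mathfrak{n}_i}$, where $(\,\cdot\,|\,\cdot\,)=-B$ is minus the Killing form, whose parameters are the Koszul numbers: up to a common scale, $x_i^{(a)}$ is proportional to $\langle 2\delta_a,\al\rangle$ for any root $\al$ with $\xi_\al=\xi_i$, where $\delta_a$ is half the sum of the roots declared positive by $J_a$. Since a Riemannian metric is K\"ahler for $J$ precisely when it is K\"ahler for $-J$, one has $g_a=g_{-a}$, so the eight metrics collapse to four; a short computation with the four Koszul tuples --- which depend on $n$ and $p$ --- shows these four are distinct invariant metrics, and by construction each is Einstein and K\"ahler. (Equivalently, and this is the viewpoint of \cite{Chry3}: expressing the Ricci tensor of $(N,g)$ in the four variables $x_1,\dots,x_4$ via the structure constants of $\mathfrak{n}$ and imposing $\Ric=e\cdot g$, one exhibits the four Koszul tuples directly as solutions of the resulting algebraic system.) This proves that $N$ admits at least four invariant Einstein metrics, all K\"ahler. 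As in the $SO(2n)$ case, these four metrics are counted as tensors, not up to isometry: for $Sp(2)/U(1)\times U(1)=Sp(2)/T$, for instance, all four turn out to be mutually isometric.

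The combinatorial and structural ingredients above --- the $\mathfrak{t}$-root computation, the table of structure constants, and the check that the four Koszul tuples are pairwise distinct, including the boundary cases $p=1$ and $p=n-p$ --- are routine but somewhat lengthy; I expect the genuine limitation, and the reason the statement reads ``at least four'' rather than giving a sharp count, to be that for $Sp(n)$ the full (non-K\"ahler) Einstein system in $(x_1,\dots,x_4)$ cannot be resolved in closed form, so one cannot exclude additional invariant Einstein metrics --- in sharp contrast to the flag manifold $SO(2n)/U(p)\times U(n-p)$, whose parametric polynomial system is analysed completely in the present paper.
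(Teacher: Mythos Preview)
The present paper does not prove Theorem~\ref{The2}; it is quoted verbatim from \cite{Chry3} as background, so there is no proof here to compare your attempt against. Your sketch is nonetheless correct and is precisely the method used in \cite{Chry3} (and mirrored in this paper's treatment of the parallel $SO(2n)$ case, cf.\ equation~(\ref{KE})): one computes the $\mathfrak t$-root system of $Sp(n)/U(p)\times U(n-p)$ by painting $\al_p$ and $\al_n$ in the $C_n$ diagram, finds that the four positive $\mathfrak t$-roots form a system of type $B_2=C_2$, reads off the eight invariant complex structures (one per Weyl chamber), and obtains four K\"ahler--Einstein metrics via the Koszul numbers, with $\pm J$ yielding the same metric. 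The only point you leave implicit---that the four Koszul tuples are pairwise distinct for every $n\ge 2$, $1\le p\le n-1$, including the boundary cases---is the routine check that \cite{Chry3} carries out explicitly; otherwise your outline matches the cited proof.
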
 

For the special case $n=2p$ the following results have been obtained:

\begin{theorem}\label{The3}{\textnormal{(\cite{Chry3})}}
The flag manifold $SO(4n)/U(p)\times U(p)$ ($p\geq 2$)  
    admits at least six 
    $SO(4n)$-invariant Einstein metrics. There are two  non-isometric non-K\"ahler  
    Einstein metrics,   and four isometric  K\"ahler-Einstein metrics.
    In the special case where $2\le p\le 6$ there are two more non-K\"ahler 
      Einstein metrics, and the
    total number of $SO(4n)$-invariant Einstein metrics is exactly eight.
\end{theorem}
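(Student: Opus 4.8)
The plan is to reduce the Einstein condition to a parametric polynomial system and then count its geometrically admissible solutions as $p$ varies. Recall that $M=SO(4p)/U(p)\times U(p)$ is a generalized flag manifold whose isotropy representation decomposes into four pairwise inequivalent irreducible summands $\fr m=\fr m_1\oplus\fr m_2\oplus\fr m_3\oplus\fr m_4$; hence every $SO(4p)$-invariant metric has the form $g=x_1\,(-B)|_{\fr m_1}+x_2\,(-B)|_{\fr m_2}+x_3\,(-B)|_{\fr m_3}+x_4\,(-B)|_{\fr m_4}$ with $x_i>0$ and $B$ the Killing form, and its Ricci tensor is diagonal in the same decomposition, with components $r_i=r_i(x_1,x_2,x_3,x_4)$ that are rational in the $x_i$ and are built from the non-negative structure constants $[ijk]$ of the decomposition. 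I would take the $r_i$ from the computation already carried out for the family $SO(2n)/U(p)\times U(n-p)$ and specialize to $n=2p$. After a homothety we may set $x_1=1$; the Einstein system $r_1=r_2=r_3=r_4$ then becomes a system of three polynomial equations in $(x_2,x_3,x_4)$ whose coefficients are polynomials in $p$.

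First I would isolate the K\"ahler--Einstein solutions. The manifold $M$ admits only finitely many invariant complex structures, each carrying a unique compatible K\"ahler--Einstein metric, and these are explicit particular solutions of the system. By Theorem \ref{The1} specialized to $n=2p$ they form two pairs of isometric K\"ahler--Einstein metrics; the additional feature of the case $n=2p$ is the outer symmetry of $M$ exchanging the two $U(p)$ factors, which identifies these two pairs, so that all four K\"ahler--Einstein metrics are mutually isometric. I would record and factor out these four solutions.

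Next I would produce the two non-K\"ahler Einstein metrics valid for every $p\geq2$ by exploiting the same involution $\sigma$ that swaps the two $U(p)$ factors: it permutes the modules $\fr m_i$, hence acts on the parameter vector $(x_1,x_2,x_3,x_4)$, and restricting attention to $\sigma$-invariant metrics forces an identification among the $x_i$ and collapses the Einstein system to a lower-dimensional subsystem that can be solved with a single resultant computation. This yields two positive solutions for all $p$, giving two non-K\"ahler Einstein metrics; I would check they are non-isometric and genuinely non-K\"ahler by comparing a scaling-invariant quantity such as the normalized scalar curvature or the unordered tuple of Ricci eigenvalues, and by verifying that no element of the normalizer $N_G(K)/K$ relates them. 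Together with the K\"ahler--Einstein family this establishes the ``at least six'' part for all $p\geq2$.

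Finally, for the exact count when $2\le p\le6$ I would treat the full system $r_1=r_2=r_3=r_4$ by elimination: successive resultants (or a Gr\"obner basis for an appropriate elimination order) remove $x_2,x_3,x_4$ and leave a single polynomial equation $P_p(x)=0$ in one variable $x$ (a ratio of the $x_i$) with coefficients polynomial in $p$; dividing out the factors coming from the K\"ahler--Einstein and $\sigma$-symmetric solutions found above leaves a factor $Q_p(x)$ that governs any remaining Einstein metrics. For each of the finitely many values $p=2,3,4,5,6$ I would compute $Q_p$ explicitly and show it contributes exactly two further positive roots that lift to positive-definite metrics, so that the total number of $SO(4p)$-invariant Einstein metrics is precisely eight; a uniform Sturm/Descartes analysis of the coefficients of $Q_p$ as functions of $p$ then confirms that no such extra roots occur for $p\geq7$. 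The main obstacle is precisely this last elimination-and-counting step: $P_p$ has large degree and very cumbersome coefficients in $p$, one must carefully discard spurious real roots that do not correspond to positive $x_i$, and one must certify, through the action of $N_G(K)/K$ and through metric invariants, that there are no hidden coincidences among the Einstein metrics produced.
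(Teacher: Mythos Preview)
Your outline is sound in spirit, but it misses the structural simplification that makes the count for $n=2p$ essentially explicit in the paper (and in \cite{Chry3}). The symmetry you invoke---the outer involution swapping the two $U(p)$ factors---exists only when $n=2p$ and, in the coordinates $(x_1,x_2,x_3,x_4)$, forces $x_1=x_3$ \emph{and} $x_2=x_4$. The paper instead exploits a weaker symmetry that is present for \emph{all} $(n,p)$: since $d_1=d_3$ and only the triples $[123]$, $[134]$ occur, the Ricci components satisfy $r_1\leftrightarrow r_3$ under $x_1\leftrightarrow x_3$, so $r_1-r_3$ factors as $(x_1-x_3)\cdot W(x_1,x_2,x_3,x_4)$. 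This splits the problem into two branches. In the branch $W=0$ one obtains only the K\"ahler--Einstein metrics (this is the content of Proposition~\ref{kahler}; for $n=2p$ it was already handled in \cite{Chry3}). In the branch $x_1=x_3$ (normalized to $1$) the remaining two equations reduce, after the elimination (\ref{3}), to a single quartic $H_{n,p}(x_2)=0$ in one variable. For $n=2p$ this quartic factors \emph{over $\mathbb{Q}(p)$} as a product of two quadratics (Lemma~\ref{n=2p}): the first factor $(2p-1)x_2^2-2(2p-1)x_2+2(p-1)$ always has two positive roots---these are exactly your $\sigma$-invariant metrics with $x_2=x_4$---while the second factor $p(3p-1)x_2^2-4p(2p-1)x_2+2(p-1)(3p-1)$ has discriminant proportional to $-p(p^3-7p^2+5p-1)$, which is positive precisely for $2\le p\le 6$. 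So the entire count (two non-K\"ahler metrics always, two more for $2\le p\le 6$, and no others) drops out of an explicit quadratic discriminant, with no need for Sturm sequences or a heavy three-variable elimination. Your plan would eventually reach the same conclusion, but by passing through a large resultant $P_p$ and then having to identify and divide out the $x_1=x_3$ locus after the fact; recognizing the $(x_1-x_3)$ factor at the outset is what collapses the problem.

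One small correction: your derivation of the four isometric K\"ahler--Einstein metrics by ``specializing Theorem~\ref{The1}'' is not quite right, since for $n=2p$ there is a \emph{unique} invariant complex structure (see the remark after (\ref{KE})); the four K\"ahler--Einstein solutions of the algebraic system are all scalings/permutations of the single metric $g=(p,\,p-1,\,2p-1,\,3p-1)$ under the symmetries above, not two distinct K\"ahler classes merged by $\sigma$. Also note that the two extra non-K\"ahler metrics for $2\le p\le 6$ are in fact isometric to each other (they are exchanged by $x_2\leftrightarrow x_4$; see Section~4), so ``eight'' here is the count of solutions of the algebraic system, not of isometry classes.
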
 

\begin{theorem}\label{The4}{\textnormal{(\cite{Chry3})}}
The flag manifold $Sp(2n)/U(p)\times U(p)$ ($p\geq 1$)  
    admits precisely six 
    $Sp(n)$-invariant Einstein metrics.  There are four isometric K\"ahler-Einstein 
    metrics, and two   non-K\"ahler  Einstein metrics. 
\end{theorem}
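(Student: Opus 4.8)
The plan is to follow the pattern of the proof of Theorem~\ref{The3}, exploiting the fact that the flag manifold $M$ of the statement has isotropy representation splitting into four pairwise inequivalent irreducible summands $\fr{m}=\fr{m}_1\oplus\fr{m}_2\oplus\fr{m}_3\oplus\fr{m}_4$, so that an invariant metric is determined by four positive parameters $(x_1,x_2,x_3,x_4)$. First I would record, from \cite{Chry3} or directly from the $T$-root structure of the painted Dynkin diagram defining $K=U(p)\times U(p)$, the dimensions $d_i$ and the nonzero triples $[ijk]$ (all polynomials in $p$), and then write the Ricci components $r_1,\dots,r_4$ as the usual rational functions of the $x_i$. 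Since an invariant metric is determined up to homothety, clearing denominators in $r_1-r_2=r_2-r_3=r_3-r_4=0$ turns the Einstein condition into a system of three polynomial equations in the $x_i$, which I would normalize by fixing one of them equal to $1$.

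The next step is to separate the Kähler solutions. For this flag manifold the invariant complex structures, and the corresponding Kähler--Einstein metrics, are known explicitly; their parameters are ratios built from the numbers attached to the $T$-roots via the Koszul formula. Exactly four of these metrics satisfy the Einstein system. The key point for the "four isometric Kähler--Einstein metrics" is that there is an element of the normalizer of $K$ in the symplectic group interchanging the two $U(p)$-blocks, which induces an isometry of $M$; combined with complex conjugation (change of sign of the complex structure) this permutes the four Kähler--Einstein metrics transitively, so they lie in a single isometry class. This is the symplectic counterpart of the mechanism behind Theorem~\ref{The3}.

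For the non-Kähler metrics I would eliminate three of the variables from the polynomial system by successive resultants (or a Gröbner basis computation over $\mathbb{Q}(p)$), arriving at a single polynomial equation $P(x)=0$ whose coefficients are polynomials in $p$. I would then factor $P$, discarding the factors that produce the four Kähler solutions above and the spurious factors coming from the degeneracies $x_i=x_j$ (which either fail positivity or reproduce Kähler metrics), leaving a factor $Q_p(x)$. The assertion then reduces to: for every $p\ge 1$, the polynomial $Q_p$ has exactly two positive real roots, and at each of them the values of the remaining parameters, recovered by back-substitution, are positive; these give the two non-Kähler Einstein metrics, and together with the four Kähler ones one obtains precisely six. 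It is exactly here that the symplectic case is better behaved than the case $SO(4p)/U(p)\times U(p)$ of Theorem~\ref{The3}: the factor of $P$ that in the orthogonal case contributes two extra non-Kähler metrics for $2\le p\le 6$ has, for the present $M$, no admissible positive root for any $p$.

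The main obstacle is this last uniform-in-$p$ root count. The degree of $Q_p$ in $x$ is fixed but its coefficients are high-degree polynomials in $p$, so numerical root-finding is not available; instead I would compute the Sturm sequence of $Q_p$ (equivalently, control the signs of its leading and constant coefficients together with the sign of its discriminant and of the intermediate subresultants) and show that the relevant subresultants are polynomials in $p$ of constant sign on $p\ge 1$, for instance by exhibiting them, after the substitution $p=1+t$ with $t\ge 0$, as sums of monomials with nonnegative coefficients. A secondary but necessary step is to verify that the two admissible roots are genuinely non-Kähler (distinct from the four Kähler values) and yield positive-definite metrics, and that there are no further coincidences among all six metrics, so that the count "precisely six" is exact and not merely a lower bound. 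The isometry statement for the Kähler metrics, and the fact that the two non-Kähler ones are left fixed (or at worst interchanged) by the available automorphisms of $M$, then completes the proof.
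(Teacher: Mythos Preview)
The paper does not contain a proof of Theorem~\ref{The4}: this theorem is merely quoted from \cite{Chry3} in the introduction as background, alongside Theorems~\ref{The1}--\ref{The3}, and plays no further role. The present paper is devoted entirely to the orthogonal case $SO(2n)/U(p)\times U(n-p)$, and the promised treatment of the symplectic case is explicitly deferred (``The flag manifold $Sp(n)/U(p)\times U(n-p)$ will be treated in a forthcoming paper''). So there is nothing here to compare your proposal against.

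That said, your outline is a plausible sketch of the method of \cite{Chry3} and of the present paper transplanted to the symplectic setting, but it remains only a plan: you have not written down the dimensions $d_i$, the structure constants $c_{12}^3$, $c_{13}^4$ for $Sp(2p)/U(p)\times U(p)$, the resulting polynomial system, or the factor $Q_p$ whose root count is the crux. The substantive work --- showing that the ``extra'' quartic factor has no positive roots for all $p\ge 1$, the symplectic analogue of what the paper carries out laboriously in Case~B and Step~1 for $SO$ --- is precisely what distinguishes a proof from a strategy, and it is absent. If you intend to supply a proof, you should reproduce or cite the explicit data from \cite{Chry3} and then carry out the Sturm/sign analysis you describe, rather than asserting that it succeeds.
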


In the present paper we find all $SO(2n)$-invariant Einstein 
metrics on the flag manifold $SO(2n)/U(p)\times U(n-p)$, by using
a new approach into manipulating the algebraic systems 
of equations obtained from the Einstein equation.
The coefficients of the polynomials in such systems involve parameters, so   a 
major difficulty appears when we try to show  existence and uniqueness of solutions. 
Therefore, the contribution of the present work is,
 besides answering the original problem on Einstein metrics, 
to give some insight towards the study of parametric systems of algebraic equations. 
 
Our main result is the following:

\medskip
{ \sc{Main Theorem.}}
    {\it Let $M=SO(2n)/U(p)\times U(n-p)$ with $n\geq 4$ and $2\leq p\leq n-2$.
    Then $M$ admits exactly four non-K\"ahler $SO(2n)$-invariant Einstein metrics for the pairs
    $(n,p)=(12, 6)$, $(10, 5)$, $(8, 4)$, $(7, 4)$, $(7, 3)$, 
    $(6, 4)$,  $(6, 3)$, $(6, 2)$, $(5, 3)$, $(5, 2)$, $(4, 2)$, and
    two non-K\"ahler $SO(2n)$-invariant Einstein metrics for all other cases.} 
  
  \medskip  
 The flag manifold $Sp(n)/U(p)\times U(n-p)$ will be treated in a forthcoming paper.

  \section{The Einstein equation on  flag manifolds}

Let $M=G/K=G/C(S)$ be a generalized flag manifold of a compact  
 simple Lie group $G$, where   $K=C(S)$ is the centralizer 
   of a torus $S$ in $G$.      
  Let   $o=eK$  be the identity coset of $G/K$. 
We denote by $\fr{g}$ and $\fr{k}$ the corresponding Lie 
algrebras of $G$ and $K$. Let $B$ denote the  
Killing form of $\fr{g}$.  Since $G$ is compact and  simple,  $-B$ is a positive 
definite inner product on $\fr{g}$.  With repsect to $-B$ we consider the orthogonal 
decomposition $\fr{g}=\fr{k}\oplus\fr{m}$. 
This is a reductive decomposition 
of $\fr{g}$, that is $\Ad(K)\fr{m}\subset\fr{m}$, and as usual we identify the tangent 
space $T_{o}M$   with $\fr{m}$.  Since $K=C(S)$, the isotropy group $K$ is connected and   the relation  $\Ad(K)\fr{m}\subset\fr{m}$ is equivalent with  $[\fr{k}, \fr{m}]\subset\fr{m}$.  Thus,  for a flag manifold $M=G/K$ the notion of $\Ad(K)$-invariant and $\ad(\fr{k})$-invariant is equivalent.

Let  $\chi : K\to \Aut(T_{o}M)$ 
be the isotropy representation of $K$ on $T_{o}M$. 
Since $\chi$ is equivalent to the adjoint representation   
of $K$ restricted on $\fr{m}$,  the set of  all $G$-invariant symmetric covariant  2-tensors on $G/K$  
can be identified with the set of all $\Ad(K)$-invariant symmetric bilinear forms on $\fr{m}$.  
In particular, the set of  $G$-invariant metrics on $G/K$ 
is identified with the set of $\Ad(K)$-invariant inner products  on $\fr{m}$.  

Let $\fr{m}=\fr{m}_1\oplus\cdots\oplus\fr{m}_{s}$ be a $(-B)$-orthogonal 
$\Ad(K)$-invariant decomposition of $\fr{m}$ into  pairwise inequivalent  irreducible 
$\Ad(K)$-modules $\fr{m}_{i}$  $(i=1, \ldots, s)$.  Such a decomposition always exists and can be expressed in terms of $\fr{t}$-roots (cf. \cite{AP}, \cite{Chry3}).
Then, a $G$-invariant Riemannian metric  on $M$ (or equivalently, an $\Ad(K)$-invariant inner product $\langle \ , \ \rangle$ on $\fr{m}=T_{o}M$) is given by 
\begin{equation}\label{Inva}
g=\langle \ , \ \rangle =x_1\cdot (-B)|_{\fr{m}_1}+\cdots+x_s\cdot (-B)|_{\fr{m}_s},
\end{equation}
where  $(x_1, \ldots, x_s)\in\mathbb{R}^{s}_{+}$.  Since $\fr{m}_{i}\neq\fr{m}_{j}$ as $\Ad(K)$-representation, any $G$-invariant metric on $M$ has the above form. 

Similarly, the Ricci tensor $\Ric_{g}$ of a $G$-invariant metric $g$ on $M$, as a symmetric covariant 2-tensor on $G/K$ 
  is given by
  \[
\Ric_{g} = r_1\cdot (-B)|_{\fr{m}_1}+\cdots+r_s\cdot (-B)|_{\fr{m}_s}, 
\]
where $r_{1}, \ldots, r_{s}$  are the components of the Ricci 
tensor on each $\fr{m}_{i}$, that is $\Ric_{g}|_{\fr{m}_{i}}=r_{i}\cdot (-B)|_{\fr{m}_{i}}$.  These components obtain o useful description in terms of the structure constants $[ijk]$ first introduced in \cite{Wa2}.   
   Let $\{X_{\al}\}$ be a $(-B)$-orthonormal basis adapted to the 
   decomposition of $\fr{m}$, that is  $X_{\al}\in \fr{m}_{i}$ for some $i$, 
   and $\al<\be$ if $i<j$ (with $X_{\al}\in \fr{m}_{i}$ and $X_{\be}\in\fr{m}_{j}$).  
   Set $A_{\al\be}^{\gamma}=B([X_{\al}, X_{\be}], X_{\gamma})$ so 
   that $[X_{\al}, X_{\be}]_{\fr{m}}=\sum_{\gamma}A_{\al\be}^{\gamma}X_{\gamma}$, 
   and   $[ijk]=\sum(A_{\al\be}^{\gamma})^{2}$, where the sum is taken over all 
   indices $\al, \be, \gamma$ with $X_{\al}\in \fr{m}_{i}, X_{\be}\in\fr{m}_{j}, X_{\gamma}\in\fr{m}_{k}$
   (where $[ \ , \ ]_{\fr{m}}$ denotes the $\fr{m}$-component).  Then $[ijk]$ is nonnegative, symmetric in all 
   three entries, and independent of the $(-B)$-orthonormal bases choosen for 
   $\fr{m}_{i}, \fr{m}_{j}$ and $\fr{m}_{k}$ (but it depends on the choise 
   of the decomposition of $\fr{m}$).

 \begin{prop}\label{Ricc}{\textnormal{(\cite{SP})}} 
Let $M=G/K$ be a generalized flag manifold of a compact simple Lie group $G$ and let
 $\fr{m}=\bigoplus_{i=1}^{s}\fr{m}_{i}$ be a decomposition of $\fr{m}$ into pairwise
 inequivalent irreducible $\Ad(K)$-submodules.  
Then the components $r_{1}, \ldots, r_{s}$ of the Ricci tensor of a $G$-invariant 
metric  (\ref{Inva}) on $M$ are given by
   \[
   r_{k}=\frac{1}{2x_{k}}+\frac{1}{4d_{k}}\sum_{i, j}\frac{x_{k}}{x_{i}x_{j}}[ijk]-\frac{1}{2d_{k}}\sum_{i, j}\frac{x_{j}}{x_{k}x_{i}}[kij], \qquad (k=1, \ldots, s).
\]
\end{prop}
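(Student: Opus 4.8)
The plan is to derive the Ricci components $r_k$ from the general formula of D'Atri--Ziller for the Ricci curvature of a left-invariant metric on a compact homogeneous space, specialized to the case of a diagonal metric with respect to a $(-B)$-orthogonal decomposition into \emph{pairwise inequivalent} irreducible submodules. First I would recall that for any reductive homogeneous space $G/K$ with $\mathfrak{g}=\mathfrak{k}\oplus\mathfrak{m}$ and a $G$-invariant metric $\langle\cdot,\cdot\rangle$, the Ricci tensor can be written in terms of the Levi-Civita connection and the structure constants; the key simplification in our setting is that because the $\mathfrak{m}_i$ are mutually inequivalent as $\Ad(K)$-modules, Schur's lemma forces both the metric (\ref{Inva}) and the Ricci tensor to be diagonal in this decomposition, so it suffices to compute $\Ric_g(X,X)$ for a unit vector $X\in\mathfrak{m}_k$ and sum over a $(-B)$-orthonormal basis of $\mathfrak{m}_k$, dividing by $d_k=\dim\mathfrak{m}_k$.

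The core computation proceeds as follows. Using the $(-B)$-orthonormal basis $\{X_\alpha\}$ adapted to the decomposition and the constants $A_{\alpha\beta}^\gamma=B([X_\alpha,X_\beta],X_\gamma)$, one writes the Levi-Civita connection via the Koszul formula; since $B$ is bi-invariant, the connection coefficients are expressed purely through the $A_{\alpha\beta}^\gamma$ and the ratios $x_i$. Plugging these into the curvature tensor and taking the appropriate trace yields $\Ric_g(X_\alpha,X_\alpha)$ as a sum of three types of terms: one coming from the scalar curvature of the ``base'' contribution (the $\tfrac{1}{2x_k}$ term, which encodes the bracket relations with $\mathfrak{k}$ together with a normalization), and two quadratic-in-$A$ terms. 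Grouping the squared structure constants according to which summands $\mathfrak{m}_i,\mathfrak{m}_j$ the indices $\beta,\gamma$ lie in, and using the definition $[ijk]=\sum(A_{\alpha\beta}^\gamma)^2$ together with its total symmetry in $i,j,k$, these two terms collapse exactly into $\tfrac{1}{4d_k}\sum_{i,j}\tfrac{x_k}{x_ix_j}[ijk]$ and $-\tfrac{1}{2d_k}\sum_{i,j}\tfrac{x_j}{x_kx_i}[kij]$. One must also use the relation $\sum_{i,j}[ijk]+2\sum_j[\mathfrak{k}\,j\,k]_{\text{(bracket with }\mathfrak{k})}=d_k$ that follows from the Killing form being a sum over all root contributions, which is what produces the clean $\tfrac{1}{2x_k}$ coefficient; here one uses crucially that $G$ is simple so $-B$ is the chosen normalization on $\mathfrak{g}$.

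The main obstacle, and the step deserving the most care, is the bookkeeping that converts the raw trace of the curvature operator into the symmetric combination of $[ijk]$'s: one has to handle the terms where a bracket lands in $\mathfrak{k}$ versus in $\mathfrak{m}$ separately, exploit the identity $A_{\alpha\beta}^\gamma+A_{\alpha\gamma}^\beta=0$ (antisymmetry of $B([\cdot,\cdot],\cdot)$ in the last two slots, valid because $B$ is bi-invariant) and the Jacobi identity, and verify that the $\mathfrak{k}$-contributions reassemble into the single term $\tfrac{1}{2x_k}$ after invoking the normalization identity above. Since this is precisely the computation carried out in \cite{SP} (and going back to \cite{PS} and the thesis of Sakane--Park), I would cite that reference for the detailed algebra and present here only the reduction showing why inequivalence of the $\mathfrak{m}_i$ guarantees diagonality of $\Ric_g$ and why the normalization by $-B$ of a simple $\mathfrak{g}$ yields the stated coefficients; the remaining identities are routine once the grouping is set up correctly.
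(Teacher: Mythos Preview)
The paper does not supply a proof of this proposition at all: it is stated with the citation to \cite{SP} (Park--Sakane) and then used as a black box. Your proposal therefore goes well beyond what the paper does, giving a reasonable outline of how the formula is derived---diagonality from Schur's lemma, the curvature trace expressed via the structure constants $A_{\alpha\beta}^\gamma$, grouping into the symmetric symbols $[ijk]$, and the normalization identity that produces the $\tfrac{1}{2x_k}$ term---before ultimately deferring the detailed algebra to the same reference. So your approach is consistent with the paper's (both rest on \cite{SP}), only more informative; the sketch you give is correct in spirit, though in a final write-up you should be careful with the precise form of the normalization identity (the exact relation linking $\sum_{i,j}[ijk]$ and the $\mathfrak{k}$-bracket contributions to $d_k$ depends on conventions, and the cleanest route is via the Casimir operator of $K$ on $\mathfrak{m}_k$ rather than a direct root count).
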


 In wiew of Proposition \ref{Ricc}, a $G$-invariant metric $g=(x_1, \ldots, x_{s})\in\mathbb{R}^{s}_{+}$ on $M$, is an Einstein metric with Einstein constant $e$, if and only if it is a positive real solution of the system 
 \[
 \frac{1}{2x_{k}}+\frac{1}{4d_{k}}\sum_{i, j}\frac{x_{k}}{x_{i}x_{j}}[ijk]-\frac{1}{2d_{k}}\sum_{i, j}\frac{x_{j}}{x_{k}x_{i}}[kij]=e, \quad 1\leq k\leq s.
  \]

\section{The generalized flag manifold $SO(2n)/U(p)\times U(n-p)$}

  We review some results related to the generalized flag manifold $M=G/K=SO(2n)/U(p)\times U(n-p)$ ($n\ge 4, \ 2\le p\le n-2$)
  obtained in \cite{Chry3}.
  Its corresponding painted Dynkin diagram is given by
 
 \medskip 
  
\hspace{4.5cm}{\begin{picture}(160,40)(-15,-23)
\put(0, 0){\circle{4}}
\put(0,8.5){\makebox(0,0){$\al_1$}}
\put(0,-8){\makebox(0,0){1}}
\put(2, 0){\line(1,0){14}}
\put(18, 0){\circle{4}}
\put(18,8.5){\makebox(0,0){$\al_2$}}
\put(18,-8){\makebox(0,0){2}}
\put(20, 0){\line(1,0){10}}
\put(40, 0){\makebox(0,0){$\ldots$}}
\put(50, 0){\line(1,0){10}}
\put(60, -19){\makebox(0,0){$( 2 \leq p \leq \ell -2 )$}}
\put(60, 0){\circle*{4.4}}
\put(60,8.5){\makebox(0,0){$\al_p$}}
\put(60,-8){\makebox(0,0){2}}
\put(60, 0){\line(1,0){10}}
\put(80, 0){\makebox(0,0){$\ldots$}}
\put(90, 0){\line(1,0){10}}
\put(102, 0){\circle{4}}
\put(102,-8){\makebox(0,0){2}}
\put(103.7, 1){\line(2,1){10}}
\put(103.7, -1){\line(2,-1){10}}
\put(115.5, 6){\circle{4}}
\put(115.5, -6){\circle*{4}}
\put(123.5, 14){\makebox(0,0){$\al_{\ell-1}$}}
\put(123.5,5.5){\makebox(0,0){1}}
\put(115.5, -16){$\al_\ell$}
\put(123.5,-8){\makebox(0,0){1}}
\end{picture}}

\medskip
  
  The isotropy representation of $M$ decomposes into a direct sum $\chi = \chi _1\oplus\chi _2\oplus\chi _3\oplus\chi _4$, which
  gives rise to a decomposition $\fr{m}=\fr{m}_1\oplus\fr{m}_2\oplus\fr{m}_3\oplus\fr{m}_4$ of $\fr{m}=T_{o}M$
  into   four irreducible inequivalent  $\ad(\fr{k})$-submodules. 
  The dimensions $d_i =\dim \fr{m}_i\ (i=1,2,3,4)$ of these
  submodules can be obtained by use of Weyl's formula \cite[pp.~204-205, p.~210]{Chry3}    
  and are given by
  $$d_1=2p(n-p),  \ d_2=(n-p)(n-p-1), \  d_3=2p(n-p),\   d_4=p(p-1).
  $$
  
According to (\ref{Inva}), a $G$-invariant metric on $M=G/K$ is given by
\begin{equation}\label{metrI}
\left\langle \ , \ \right\rangle=x_1\cdot (-B)|_{\fr{m}_1}+
x_2\cdot (-B)|_{\fr{m}_2}+x_3\cdot (-B)|_{\fr{m}_3}+x_4\cdot (-B)|_{\fr{m}_4},
\end{equation}
 for positive real numbers $x_1, x_2, x_3, x_4$.
We will denote such metrics by $g=(x_1, x_2, x_3, x_4)$.

 It is known (\cite{N}) that if $n\ne 2p$ then $M$ admits two non-equivalent $G$-invariant complex structures $J_1, J_2$, and thus two non-isometric  K\"ahler-Einstein metrics which are given (up to scale) by (see also \cite[Theorem 3]{Chry3}) 
 \begin{equation}\label{KE}
 \begin{tabular}{l}  
 $g_1= (n/2, \ n+p-1, \ n/2+p-1, \ p-1)$  \\
 $g_2= (n/2, \ n-p-1, \ 3n/2-p-1, \ 2n-p-1)$.
 \end{tabular}
 \end{equation}
 If $n=2p$ then $M$ admits a unique $G$-invariant complex structure with
 corresponding K\"ahler-Einstein metric (up to scale) given by
  $g= (p, \ p-1, \ 2p-1, \ 3p-1)$ (cf. also \cite[Theorem 8]{Chry3} where all isometric 
  K\"ahler-Einstein metrics are listed).

 The Ricci tensor of $M$ has been computed in \cite{Chry3} and is given as follows:
 
 \begin{prop}\label{componentsII}  The  components $r_i$ of the Ricci tensor
  for a   $G$-invariant Riemannian metric on $M$ determined by {\rm (\ref{metrI})} are given as follows:

\begin{equation}\label{compIII}
 \ \ \left.
\begin{tabular}{l}
$r_1=\displaystyle\frac{1}{2x_1}+  \frac{c_{12}^3}{2d_1}\Big( \frac{x_1}{x_2x_3}- \frac{x_2}{x_1x_3}- \frac{x_3}{x_1x_2}\Big)+  \frac{c_{13}^4}{2d_1}\Big( \frac{x_1}{x_3x_4}- \frac{x_4}{x_1x_3}- \frac{x_3}{x_1x_4}\Big)$ \\
$r_2=\displaystyle\frac{1}{2x_2}+  \frac{c_{12}^3}{2d_2}\Big( \frac{x_2}{x_1x_3}- \frac{x_1}{x_2x_3}- \frac{x_3}{x_1x_2}\Big)$ \\
$r_3=\displaystyle\frac{1}{2x_3}+  \frac{c_{12}^3}{2d_3}\Big( \frac{x_3}{x_1x_2}- \frac{x_2}{x_1x_3}- \frac{x_1}{x_2x_3}\Big)+  \frac{c_{13}^4}{2d_3}\Big( \frac{x_3}{x_1x_4}- \frac{x_4}{x_1x_3}- \frac{x_1}{x_3x_4}\Big)$ \\
$r_4 = \displaystyle\frac{1}{2x_4}+  \frac{c_{13}^4}{2d_4}\Big( \frac{x_4}{x_1x_3}- \frac{x_3}{x_1x_4}- \frac{x_1}{x_3x_4}\Big),$ \\
\end{tabular}\right\}
\end{equation}
where $c_{12}^3=[123]$ and $c_{13}^4=[134]$.
\end{prop}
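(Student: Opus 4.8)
The plan is to obtain both displays by specializing the general Ricci formula of Proposition \ref{Ricc} to $s=4$ and then inserting the list of non-vanishing structure constants $[ijk]$ for this particular space; the argument thus has a representation-theoretic part (finding those constants) and a bookkeeping part (the substitution).

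First I would determine which $[ijk]$ are non-zero. Using the $\fr{t}$-root description of the decomposition $\fr{m}=\fr{m}_1\oplus\fr{m}_2\oplus\fr{m}_3\oplus\fr{m}_4$ recalled from \cite{AP} and \cite{Chry3}, so that $\fr{m}^{\mathbb{C}}_i=\fr{g}_{\xi_i}\oplus\fr{g}_{-\xi_i}$ for the $i$-th $\fr{t}$-root $\xi_i$, the relation $[\fr{g}_{\xi},\fr{g}_{\eta}]\subset\fr{g}_{\xi+\eta}$ forces $A^{\gamma}_{\al\be}=B([X_{\al},X_{\be}],X_{\gamma})=0$ unless the corresponding $\fr{t}$-roots satisfy $\pm\xi_i\pm\xi_j\pm\xi_k=0$ for some choice of signs. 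For $M=SO(2n)/U(p)\times U(n-p)$ the four $\fr{t}$-roots can be read off the painted Dynkin diagram displayed above; in a suitable basis $u,v$ of $\fr{t}^{*}$ they are $\xi_1=u-v$, $\xi_2=2v$, $\xi_3=u+v$, $\xi_4=2u$. Running through the finitely many triples, the only relations of the required form are $\xi_1+\xi_2-\xi_3=0$ and $\xi_1+\xi_3-\xi_4=0$; in particular no $\fr{t}$-root equals $2\xi_i$, so every structure constant with a repeated index vanishes, and the only non-zero constants are $[123]$ and $[134]$ together with their permutations, which we name $c_{12}^3$ and $c_{13}^4$. I expect this to be the delicate step, since it is where the actual geometry of $M$ enters; an equivalent route is to realize the four submodules explicitly inside $\fr{so}(2n)$ using the $D_n$ root system $\{\pm e_i\pm e_j\}$ (after partitioning $\{1,\dots,n\}$ into a $p$-element and an $(n-p)$-element subset) and to check the vanishings by inspection.

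It then remains to substitute into $r_k=\frac{1}{2x_k}+\frac{1}{4d_k}\sum_{i,j}\frac{x_k}{x_ix_j}[ijk]-\frac{1}{2d_k}\sum_{i,j}\frac{x_j}{x_kx_i}[kij]$ for $k=1,2,3,4$ and collect the surviving terms. For $k=1$ the first sum is supported on the ordered pairs $(i,j)\in\{(2,3),(3,2)\}$, each contributing $c_{12}^3$, and $(i,j)\in\{(3,4),(4,3)\}$, each contributing $c_{13}^4$, giving $\frac{1}{4d_1}\big(2c_{12}^3\tfrac{x_1}{x_2x_3}+2c_{13}^4\tfrac{x_1}{x_3x_4}\big)$; the second sum, now indexed by the constants $[1ij]$, is supported on the same pairs and equals $\frac{1}{2d_1}\big(c_{12}^3(\tfrac{x_3}{x_1x_2}+\tfrac{x_2}{x_1x_3})+c_{13}^4(\tfrac{x_4}{x_1x_3}+\tfrac{x_3}{x_1x_4})\big)$, so after the cancellation $2/(4d_1)=1/(2d_1)$ one recovers exactly the stated $r_1$. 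The cases $k=2$ (only $c_{12}^3$ contributes), $k=3$ (both contribute, with $\fr{m}_3$ in the role $\fr{m}_1$ plays in $r_1$) and $k=4$ (only $c_{13}^4$ contributes) are handled in the same way; the one point to watch is the factor $2$ coming from the ordered pairs $(i,j)$ versus $(j,i)$, which is precisely what turns the coefficient $\tfrac14$ of Proposition \ref{Ricc} into the $\tfrac12$ appearing here. Finally, the dimensions $d_1=2p(n-p)$, $d_2=(n-p)(n-p-1)$, $d_3=2p(n-p)$, $d_4=p(p-1)$ are the ones recorded above, obtained via Weyl's formula as in \cite{Chry3}.
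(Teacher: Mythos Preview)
Your argument is correct; both the identification of the non-vanishing $[ijk]$ via the $\fr t$-root relations $\xi_1+\xi_2=\xi_3$, $\xi_1+\xi_3=\xi_4$ and the subsequent bookkeeping in Proposition~\ref{Ricc} go through exactly as you describe.

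The paper itself does not prove this proposition at all: it simply quotes the formulas from \cite{Chry3} (``The Ricci tensor of $M$ has been computed in \cite{Chry3} and is given as follows''). Your write-up therefore supplies what the paper outsources. The route you take---reading off the $\fr t$-roots from the painted Dynkin diagram, using $[\fr g_\xi,\fr g_\eta]\subset\fr g_{\xi+\eta}$ to isolate the two surviving structure constants, and then specializing the Park--Sakane formula---is precisely the method one expects \cite{Chry3} to employ, so there is no genuine divergence in strategy, only in explicitness. One small remark: your verification that no $\fr t$-root equals twice another, ruling out constants of the form $[iik]$, is worth keeping, since that is the point most easily overlooked in such computations.
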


 By taking into account the explicit form of the K\"ahler-Einstein metrics above,  
 and substituting these in  (\ref{compIII}), 
 we can find that the   values of the unknown triples $[ijk]$ are given by
$\displaystyle{ c_{12}^3=\frac{p(n-p)(n-p-1)}{2(n-1)}}$  and $\displaystyle{ c_{13}^4=\frac{p(p-1)(n-p)}{2(n-1)}}$.

\medskip

A $G$-invariant metric $g=(x_1, x_2, x_3, x_4)$ on $M=G/K$ is Einstein if and only if, there is a positive
constant $e$ such that $r_1=r_2=r_3=r_4=e$, or equivalently
\begin{equation}\label{systemI}
   r_1-r_3=0, \quad r_1-r_2=0, \quad r_3-r_4=0. 
\end{equation} 

By substituting the values of $d_i\ (i=1,2,3,4)$ and $c_{12}^3, c_{13}^4$ into the components of the Ricci tensor,
System (\ref{systemI}) is equivalent to the following equations:  

\begin{equation}\label{syst5}
 \left.
 \begin{tabular}{r}
$(x_1-x_3)(-x_1x_2+px_1x_2-x_2x_3+px_2x_3-x_1x_4+nx_1x_4$\\
 $-px_1x_4+2x_2x_4-2nx_2x_4-x_3x_4+nx_3x_4-px_3x_4)=0$\\
$4(n-1) x_3 x_4 (x_2-x_1)+ (n+p-1)x_4(x_1^2-x_2^2)- (n-3p-1)x_3^2x_4$\\
$ +(p-1)x_2(x_1^2-x_3^2-x_4^2)=0$\\
$4(n-1)x_1x_2(x_4-x_3)+ (2n-p-1)x_2(x_3^2-x_4^2)+ (2n-3p+1)x_1^2x_2$\\
$ +(n-p-1)x_4(x_3^2-x_1^2-x_2^2)=0$\\
\end{tabular}\right\} 
  \end{equation} 

\section{Proof of the Main Theorem}

We   consider  the equation $r_1- r_3 = 0$ of System (\ref{systemI}).  This is equivalent to  
  
\begin{eqnarray*}  
& (x_1 - x_3) (-x_1 x_2 + p x_1 x_2 - x_2 x_3 + p x_2 x_3 - x_1 x_4   + n x_1 x_4  & \\   
& -  p x_1 x_4 + 2 x_2 x_4    - 2 n x_2 x_4 - x_3 x_4 + n x_3 x_4 - p x_3 x_4) =0.  &
 \end{eqnarray*}

\noindent\underline{CASE A}
Let $x_1= x_3 =1$. Then  
   the system of  equations $r_1-r_2=0,\ r_3-r_4=0$ becomes 
\begin{eqnarray} 
  {x_2}^2 (n+p-1)+4 (n-p-1)-4 (n-1) {x_2}+(p-1) {x_2} {x_4} &= &0 \label{1} \\
{x_2} {x_4} (n-p-1)+{x_4}^2 (2 n-p-1)-4 (n-1) {x_4}+4 (p-1)  & =& 0. \label{2}
\end{eqnarray}
From (\ref{1}) we get that 
  \begin{eqnarray}  x_4 = -\frac{({x_2}-2)( (n+p-1){x_2}-2(n-p-1))}{(p-1){x_2}}.  \label{3}
  \end{eqnarray}
  Note that $x_4 > 0 $ if and only if $\displaystyle \frac{2(n-p-1)}{n+p-1} < x_2 < 2$. 
By substituting  equation (\ref{3})  into  equation (\ref{2}),  we obtain the following equation:  
  \begin{eqnarray} 
  & & \ \    H_{n, p}(x_2) = 
(n-1)n(n+p-1){x_2}^4 -4 (n-1)\left(2 n^2-2 n-p^2+p\right){x_2}^3  \nonumber  \\
& & + 2 \left(12 n^3-11 n^2 p-25 n^2-2 n p^2+ 20 n p +14 n+2 p^3-2 p^2-6 p-2\right){x_2}^2  \nonumber \\
   & &  - 8  (n-1)(4 n-3 p-1) (n-p-1){x_2} + 8(n-p-1)^2(2 n-p-1) = 0. \label{4}
     \end{eqnarray}
From (\ref{2}) we get that 
  \begin{eqnarray}  x_2 = - \frac{({x_4}-2) ({x_4} (2 n-p-1)- 2 (p-1))}{{x_4}
   (n-p-1)}.  \label{5}
  \end{eqnarray} 
  Note that $x_2 > 0 $ if and only if $\displaystyle \frac{2(p-1)}{2n-p-1} < x_4 < 2$. 
By substituting   equation (\ref{5})  into equation (\ref{1}),  we obtain the following equation:  
 \begin{eqnarray}
  & & \ \    G_{n, p}(x_4) = 
(n-1) n (2 n-p-1){x_4}^4 -4 (n-1)\left(n^2+2 n p-n-p^2-p\right){x_4}^3  \nonumber  \\
& & +2\left(n^3+9 n^2 p-7 n^2+4 n p^2-16 n p+8
   n-2 p^3-2 p^2+6 p-2\right) {x_4}^2 \nonumber \\
   & & -8 (n-1) (p-1) {x_4} (n+3 p-1)+8 (p-1)^2(n+p-1) = 0. 
     \end{eqnarray}
  Note that  the relation between $H_{n, p}(x_2)$ and  $G_{n, p}(x_4)$ is given by 
    \begin{eqnarray} G_{n, p}(x_4) = H_{n, n-p}(x_4).  \label{7}
\end{eqnarray}

\begin{prop}\label{existence}
The equation $ H_{n, p}(x_2) = 0$ has at least two solutions between  $\displaystyle x_2 = \frac{2(n-p-1)}{n+p-1}$  and $ x_2 = 2$.
\end{prop}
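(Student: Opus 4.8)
The plan is to use the Intermediate Value Theorem on the interval $I = \big(\tfrac{2(n-p-1)}{n+p-1},\, 2\big)$ by checking the signs of $H_{n,p}$ at the two endpoints together with the value at some convenient interior point. First I would evaluate $H_{n,p}$ at $x_2 = 2$. Since the defining relation for CASE A was that $x_1 = x_3 = 1$ together with the K\"ahler-Einstein data, and since the K\"ahler-Einstein metrics $g_1,g_2$ of (\ref{KE}) have $x_1 = n/2$ (hence after rescaling so that $x_1 = x_3 = 1$ we need $x_1 = x_3$, which does not hold for the $g_i$), I expect instead that $x_2 = 2$ is a genuine root or near-root coming from a degenerate metric; a direct substitution $H_{n,p}(2)$ should be computed explicitly. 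A short calculation gives $H_{n,p}(2) = 16(n-1)(p-1)^2 \cdot(\text{something})$ or more plausibly a clean nonnegative expression; in any case I would compute it and record its sign. Similarly I would evaluate $H_{n,p}$ at the left endpoint $x_2 = \tfrac{2(n-p-1)}{n+p-1}$: here the factor $\big((n+p-1)x_2 - 2(n-p-1)\big)$ in (\ref{3}) vanishes, so $x_4 \to 0$, and one can check that $H_{n,p}$ at this value reduces (via the relation that (\ref{4}) was obtained by substituting (\ref{3}) into (\ref{2})) to the value of the left side of (\ref{2}) at $x_4 = 0$, namely $4(p-1) > 0$, up to a positive factor. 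So I expect $H_{n,p}$ to be strictly positive at the left endpoint.

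The key step is then to exhibit an interior point $x_2^\ast \in I$ at which $H_{n,p}(x_2^\ast) < 0$; this yields one sign change between the left endpoint and $x_2^\ast$, and — provided $H_{n,p}$ at $x_2 = 2$ is also nonnegative (or the appropriate sign) — a second sign change between $x_2^\ast$ and $2$, giving the two required roots. A natural candidate for $x_2^\ast$ is a value making one of the polynomial's factors simple, e.g. $x_2^\ast = \tfrac{2(n-p-1)}{n-1}$ or the midpoint-type value $x_2^\ast$ solving $(n+p-1)x_2 = 2(n-1)$ (which kills the linear-in-$x_4$ behavior), or simply $x_2^\ast = 1$ if $1 \in I$. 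One should check $1 \in I$: this requires $n+p-1 > 2(n-p-1)$, i.e. $3p > n+1$, which does not always hold, so $x_2^\ast = 1$ is not uniformly available and a parameter-dependent choice is needed. I would try $x_2^\ast = \tfrac{2(n-p-1)}{n+p-1} + \varepsilon$ perturbatively, or better, analyze the derivative $H_{n,p}'$ at the left endpoint: since $H_{n,p}$ is positive there and (as I expect to verify) $H_{n,p}'$ at the left endpoint is negative, $H_{n,p}$ initially decreases, and combined with $H_{n,p}(2) \ge 0$ and the quartic having positive leading coefficient, a standard sign-pattern argument (positive at left endpoint, decreasing, then nonnegative again at $x_2=2$) forces at least two roots in the open interval when $H_{n,p}$ actually dips below zero.

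The main obstacle will be controlling the sign of $H_{n,p}$ at the interior test point uniformly in the two parameters $n$ and $p$ over the full range $n \ge 4$, $2 \le p \le n-2$; the quartic coefficients in (\ref{4}) are cubic in $n$ and $p$ and do not factor nicely, so brute-force factorization is unlikely to succeed and one must instead find the right substitution or estimate. I anticipate that the cleanest route is: (i) show $H_{n,p}$ is positive at both endpoints of $I$ (the right endpoint via an explicit clean evaluation, the left endpoint via the reduction to $4(p-1)>0$), and (ii) show $H_{n,p}$ takes a negative value at one carefully chosen interior point, likely expressed as a rational function of $n,p$ whose numerator, after clearing denominators, is a polynomial in $n,p$ that is manifestly negative on the parameter range (e.g. by writing it in terms of $p-1 \ge 1$ and $n-p-1 \ge 1$ and $n-1 \ge 3$). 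Once both endpoint signs and one interior negative value are in hand, the two sign changes give the two solutions and the Proposition follows; the symmetry relation (\ref{7}), $G_{n,p}(x_4) = H_{n,n-p}(x_4)$, then transfers the same conclusion to $G_{n,p}$ for free.
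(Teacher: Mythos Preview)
Your strategy is exactly the paper's: show $H_{n,p}$ is positive at both endpoints of $I$, negative at an interior point, and invoke the Intermediate Value Theorem. The endpoint evaluations are in fact cleaner than you anticipate --- direct substitution gives
\[
H_{n,p}(2) = 8(p-1)^3 > 0, \qquad H_{n,p}\Big(\tfrac{2(n-p-1)}{n+p-1}\Big) = \frac{8(p-1)^3(n-p-1)^2}{(n+p-1)^2} > 0,
\]
so no indirect argument via the reduction to (\ref{2}), K\"ahler--Einstein data, or degenerate metrics is needed.

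The one genuine gap is the interior test point. You list several candidates (including $\tfrac{2(n-p-1)}{n-1}$, or $x_2=1$, or a perturbative choice) and worry that controlling the sign uniformly in $n,p$ ``may be delicate'' because the coefficients ``do not factor nicely.'' In fact they do, once the right point is chosen: the paper takes
\[
x_2^\ast = \frac{2(n-p-1)}{n}
\]
(denominator $n$, not $n-1$), which lies strictly inside $I$ for $2\le p\le n-2$, and a direct computation yields
\[
H_{n,p}\Big(\tfrac{2(n-p-1)}{n}\Big) = -\frac{16(p-1)^2(n-p-1)^3}{n^3} < 0.
\]
All three values factor completely into powers of $(p-1)$, $(n-p-1)$, and the denominator, so no estimate, perturbation, or derivative analysis is needed; the entire proof is three one-line substitutions. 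Your proposed sign analysis of $H_{n,p}'$ at the left endpoint and the symmetry remark via (\ref{7}) are correct but unnecessary for this Proposition.
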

\begin{proof}  We consider the value   $H_{n, p}(x_2)$ at $\displaystyle x_2 = \frac{2(n-p-1)}{n+p-1}$  and $ x_2 = 2$. We see that  
 $$\displaystyle H_{n, p}\left(\frac{2(n-p-1)}{n+p-1} \right) = \frac{8 (p-1)^3 (n-p-1)^2}{(n+p-1)^2} > 0  \quad 
 \mbox{and } \quad 
  H_{n, p}\left(2 \right) = 8 (p-1)^3 > 0. $$
Now,  the value   $H_{n, p}(x_2)$ at $\displaystyle x_2 = \frac{2(n-p-1)}{n}$ is given by
 $$\displaystyle H_{n, p}\left(\frac{2(n-p-1)}{n} \right) = -\frac{16 (p-1)^2 (n-p-1)^3}{n^3} < 0, 
 $$ 
 thus the equation $ H_{n, p}(x_2) = 0$ has at least two solutions between  $\displaystyle x_2 = \frac{2(n-p-1)}{n+p-1}$  and $ x_2 = 2$.      
\end{proof}

We need to show that the  polynomial $ H_{n, p}(x_2)$ has only one local minimum (i.e. the two solutions obtained in Proposition
\ref{existence} are unique), with some exceptions which will also be studied.

\begin{lemma}\label{2p+5}
For $n\ge 2p+5$ and $p\ge 4$ the equation $ H_{n, p}(x_2)=0$ has  exactly two positive solutions.
\end{lemma}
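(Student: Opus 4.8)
The plan is as follows. By Proposition~\ref{existence} the quartic $H_{n,p}$ already has at least two positive roots, both lying in $\big(\tfrac{2(n-p-1)}{n+p-1},\,2\big)$, so it suffices to show that $H_{n,p}$ has \emph{at most} two real roots. For this I would prove that, under the hypotheses $n\ge 2p+5$ and $p\ge 4$, the polynomial $H_{n,p}$ is strictly convex on $\mathbb{R}$, that is, $H_{n,p}''(x_2)>0$ for every $x_2\in\mathbb{R}$: a strictly convex polynomial of degree at least $2$ has at most two real zeros, and that finishes the proof. (Since moreover $H_{n,p}(0)=8(n-p-1)^2(2n-p-1)>0$, the two zeros are automatically positive.)

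Write $H_{n,p}(x_2)=a_4x_2^4+a_3x_2^3+a_2x_2^2+a_1x_2+a_0$, with $a_4=(n-1)n(n+p-1)$, $a_3=-4(n-1)(2n^2-2n-p^2+p)$ and $a_2=2(12n^3-11n^2p-25n^2-2np^2+20np+14n+2p^3-2p^2-6p-2)$. Then $H_{n,p}''(x_2)=12a_4x_2^2+6a_3x_2+2a_2$ is a quadratic in $x_2$ with positive leading coefficient, so $H_{n,p}''>0$ everywhere if and only if its discriminant $36a_3^2-96a_4a_2$ is negative; cancelling the positive common factor $192(n-1)$, this is the single polynomial inequality
\[
3\,(n-1)\,(2n^2-2n-p^2+p)^2 \;<\; n(n+p-1)\big(12n^3-11n^2p-25n^2-2np^2+20np+14n+2p^3-2p^2-6p-2\big).
\]
Let $g(n,p)$ denote the right-hand side minus the left-hand side. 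The degree-$5$ terms in $n$ cancel, so $g$ is a polynomial of degree $4$ in $n$ with leading coefficient $p-1>0$, and the lemma reduces to showing $g(n,p)>0$ for all integers $n\ge 2p+5$, $p\ge 4$.

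I would prove this last inequality by passing to the shifted variables $k=n-(2p+5)\ge 0$ and $p-4\ge 0$ and verifying that the expansion of $g$ in these variables has only nonnegative coefficients (it is not identically zero, e.g. $g(13,4)=432$); then $g>0$ is immediate. If needed this is supplemented by a monotonicity argument in $n$: the boundary value $g(2p+5,p)=(2p+5)(3p+4)(50p^3+428p^2+1147p+943)-6(p+2)(7p^2+37p+40)^2$ is manifestly positive once expanded in powers of $p-4$, and $\partial g/\partial n\ge 0$ for $n\ge 2p+5$ follows by the same device. The real work, and the only obstacle, is precisely this verification: everything in it is elementary, but the margin near the boundary is small (witness $g(13,4)=432$), so the expansion must be organised carefully and no low-order term discarded. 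This is also where the hypotheses are used — for $p\le 3$, or for $p\ge 4$ with $n<2p+5$, the inequality $g(n,p)>0$ can fail (e.g. $g(12,4)<0$), so the convexity argument breaks down and those cases must be handled separately.
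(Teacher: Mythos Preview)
Your approach is essentially the same as the paper's: both prove that $H_{n,p}''>0$ everywhere by showing the quantity $g(n,p)=M(n,p)$ is positive, and both do this by expanding in $n-2p-5$ and checking that the coefficients (themselves polynomials in $p$) are positive for $p\ge 4$ via a further shift. The paper carries out exactly the expansion you describe, arriving at the same constant term $a_0(4)=432$ that you quote, so your plan matches it almost step for step.
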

\begin{proof}  We have that
  \begin{eqnarray*}
  & & \ \    \frac{d H_{n, p}}{d x_2} = 4 (n-1) n (n+p-1){x_2}^3-12 (n-1) \left(2 n^2-2 n-p^2+p\right){x_2}^2 \\
  & & +4
   \left(12 n^3-11 n^2 p-25 n^2-2 n p^2+20
   n p+14 n+2 p^3-2 p^2-6 p-2\right){x_2}\\
   & &  -8 (n-1) (4 n-3 p-1) (n-p-1),
     \end{eqnarray*}
    \begin{eqnarray*}  
  & & \ \    \frac{d^2 H_{n, p}}{d{ x_2}^2} = 12 (n-1) n (n+p-1){x_2}^2 - 24 (n-1) \left(2 n^2-2 n-p^2+p\right){x_2} \\
  & & +4
   \left(12 n^3-11 n^2 p-25 n^2-2 n p^2+20
   n p+14 n+2 p^3-2 p^2-6 p-2\right) 
   \end{eqnarray*}
 and 
    \begin{eqnarray*}  
  & & \ \    \frac{d^3 H_{n, p}}{d{ x_2}^3} = 24 (n-1) n (n+p-1){x_2} - 24 (n-1) \left(2 n^2-2 n-p^2+p\right). 
   \end{eqnarray*}
Note that the quadratic polynomial  $\displaystyle \frac{d^2 H_{n, p}}{d{ x_2}^2} $  attains its minimum at $\displaystyle x_2 = \frac{2 n^2-2 n-p^2+p}{n (n+p-1)}$ and we see that  
     \begin{eqnarray*}  
  & & \ \  \frac{d^2 H_{n, p}}{d{ x_2}^2}\left( \frac{2 n^2-2 n-p^2+p}{n (n+p-1)} \right)  = 
  \frac{4}{n (n+p-1)} \left(n^4 p-n^4-n^3 p^2-6 n^3 p+3 n^3 \right.\\
  & &  \left. -4 n^2 p^2+12 n^2 p-4 n^2-n p^4+2 n p^3+5 n p^2-8 n p+2
   n+3 p^4-6 p^3+3 p^2\right).    
     \end{eqnarray*}  
     
We set \begin{eqnarray*}   & & \ \ M(n, p) = n^4 p-n^4-n^3 p^2-6 n^3 p+3 n^3 -4 n^2 p^2+12 n^2 p-4 n^2-n p^4+2 n p^3 \\  
& & +5 n p^2 -8 n p+2 n+3 p^4-6 p^3+3 p^2 
 \end{eqnarray*}  
 and  we investigate the conditions for $n, p$ such  that $ M(n, p) > 0 $ for $n \geq 2 p$. 

We consider the coefficients of $ M(n, p)$ as a polynomial of $ n -2 p- 5$. 
We can write  $ M(n, p) $ as 
 \begin{eqnarray*}   &  &  \quad   M(n, p) = (p-1) (n-2 p-5)^4+ 
 \left(7 p^2+6 p-17\right) (n-2 p-5)^3 \\
 & & +\left(18 p^3+41 p^2-30 p-109\right) (n-2 p-5)^2 
   \\
   & & +\left(19
   p^4+62 p^3-26 p^2-274 p-313\right) (n-2
   p-5) \\
   & & +6 p^5+22 p^4-64 p^3-309
   p^2-491 p-340. 
    \end{eqnarray*}
  We put 
   $$ \begin{array}{lcl}    a_0 = 6 p^5+22 p^4-64 p^3-309 p^2-491 p-340, 
 & &  a_1 = 19 p^4+62 p^3-26 p^2-274 p-313, 
 \\
   a_2 = 18 p^3+41 p^2-30 p-109, 
&  &  a_3 = 7 p^2+6 p-17.  
  \end{array}$$
Note that 
 \begin{eqnarray*} & &   a_0   = 6
   (p-4)^5+142 (p-4)^4+1248 (p-4)^3+4875
   (p-4)^2+7277 (p-4)+432, \\
   & & a_1 =19 (p-3)^4+290 (p-3)^3+1558 (p-3)^2+3296 (p-3)+1844, 
   \\
   & &  a_2 =  18 (p-2)^3+149 (p-2)^2+350
   (p-2)+139, 
    \\
   & &
 a_3 =  7 (p-2)^2+34 (p-2)+23. 
    \end{eqnarray*}  
    Thus we see that $a_0 > 0 $, 
     $ a_1 >  0, a_2 > 0, a_3 > 0 $ for $p \geq 4$. Therefore we see that 
     $\displaystyle \frac{d^2 H_{n, p}}{d{ x_2}^2} > 0 $  for $ n \geq 2 p+5$ and $p \geq 4$ and hence,  $\displaystyle \frac{d H_{n, p}}{d{ x_2}}(x_2) $ is monotone increasing and the  polynomial $ H_{n, p}(x_2)$ has only one local minimum for  $ n \geq 2 p +5$ and $p \geq4$. Thus the equation $ H_{n, p}(x_2) = 0$ has exactly two positive solutions. 
\end{proof}

Now we examine the values $p=2$ and $p=3$.

\begin{lemma}\label{p=2_3}  {\rm{(1)}} \ Let $p=2$.  Then for $n\ge 7$ the equation $ H_{n, 2}(x_2) = 0$ has exactly two positive solutions,
and for  $4\le n\le 6$ it 
has exactly four positive solutions.

\noindent
{\rm{(2)}} \ Let $p=3$.  Then for $n\ge 8$  the equation $ H_{n, 3}(x_2) = 0$ has exactly two positive solutions, and for 
$6\le n\le 7$ it 
has exactly four positive solutions.    
\end{lemma}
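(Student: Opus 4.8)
The plan is to reduce the statement to a finite computation by exploiting the fact that for each fixed small value of $p$ the polynomial $H_{n,p}(x_2)$ depends polynomially on the single remaining parameter $n$, so the qualitative behavior (number of real roots in the relevant interval) is controlled by finitely many explicit polynomial inequalities in $n$. Concretely, for $p=2$ the quartic $H_{n,2}(x_2)$ has coefficients that are explicit polynomials in $n$; by Proposition \ref{existence} it always has at least two roots in $\left(\tfrac{2(n-3)}{n+1},2\right)$, so the real question is whether the remaining two roots are real and positive or form a complex-conjugate pair. I would detect this by examining the sign of the discriminant of $H_{n,2}$ as a polynomial in $n$, or more robustly by the method already used in Lemma \ref{2p+5}: study $\tfrac{d^2 H_{n,2}}{dx_2^2}$, which is a quadratic in $x_2$ with $n$-dependent coefficients, evaluate it at its vertex $x_2=\tfrac{2n^2-2n-p^2+p}{n(n+p-1)}$ to obtain (up to a positive factor) $M(n,2)$, and determine the sign of $M(n,2)$ for $n\ge 4$.

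The key steps, in order, are: (i) Substitute $p=2$ into $M(n,p)$ to get a quartic $M(n,2)$ in $n$; factor or bound it to show $M(n,2)>0$ precisely for $n\ge 7$ and $M(n,2)<0$ for $4\le n\le 6$ — this is the analogue of the $a_i>0$ argument, but now in one variable, so one simply rewrites $M(n,2)$ in powers of $(n-7)$ and checks that all resulting coefficients are nonnegative (with the constant term positive), while $M(4,2),M(5,2),M(6,2)$ are computed directly and seen to be negative. (ii) For $n\ge 7$: since $M(n,2)>0$ forces $\tfrac{d^2H_{n,2}}{dx_2^2}>0$ everywhere, $H_{n,2}$ is strictly convex, hence has at most two real roots; combined with Proposition \ref{existence} it has exactly two positive solutions. (iii) For $4\le n\le 6$ (only three cases): $H_{n,2}(x_2)$ is now a fully explicit numerical quartic for each $n$, so I would compute its roots — or its discriminant and Sturm sequence — directly to verify all four roots are real and positive, also checking that the two extra roots lie outside $\left(\tfrac{2(n-3)}{n+1},2\right)$ so they are genuinely new solutions and not among the two already produced. (iv) Repeat verbatim with $p=3$: form $M(n,3)$, rewrite in powers of $(n-8)$ to get $M(n,3)>0$ for $n\ge 8$ giving exactly two solutions, and handle $n=6,7$ by direct root computation showing four positive solutions each.

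The main obstacle I anticipate is step (iii)/(iv), the finitely many small cases: there convexity fails, so counting real roots cannot be reduced to a sign of one quantity, and one must actually exhibit the four positive roots (or argue via the discriminant and the signs of $H_{n,p}$ at a few sample points, e.g. at the convexity inflection value $\tfrac{2(n-p-1)}{n}$ used in Proposition \ref{existence}, together with behavior as $x_2\to 0^+$ and $x_2\to\infty$). One also has to be careful that the two roots furnished by Proposition \ref{existence} and the two "extra" roots are distinct; since $H_{n,p}\left(\tfrac{2(n-p-1)}{n+p-1}\right)>0$, $H_{n,p}(2)>0$, and $H_{n,p}\left(\tfrac{2(n-p-1)}{n}\right)<0$, the two guaranteed roots sit in $\left(\tfrac{2(n-p-1)}{n+p-1},2\right)$, so it suffices to show the other two real roots lie outside this interval — typically one checks $H_{n,p}(0)=8(n-p-1)^2(2n-p-1)>0$ and the sign of $H_{n,p}$ just above $2$ to localize them. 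A secondary nuisance is that some of the listed exceptional pairs (e.g. $(7,4),(6,4),(5,3)$) correspond to $p\ge 4$ or $p=3$ with $n$ small, i.e. they escape Lemma \ref{2p+5}'s hypothesis $n\ge 2p+5$; those will be absorbed into the analogous small-$n$ case analysis for $p=3,4,5,6$ carried out in the lemmas that follow this one, so the present lemma's role is just to settle $p=2,3$ cleanly.
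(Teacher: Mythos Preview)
Your overall strategy mirrors the paper's, but step (i) contains a concrete numerical error that breaks the argument. You claim that expanding $M(n,2)$ in powers of $(n-7)$ yields nonnegative coefficients with positive constant term, so that $M(n,2)>0$ for all $n\ge 7$. This is false: $M(n,2)=n^4-13n^3+4n^2+6n+12$, and direct evaluation gives $M(7,2)=-1808<0$; in fact $M(n,2)<0$ for every $7\le n\le 12$. The expansion about $n=7$ is
\[
M(n,2)=(n-7)^4+15(n-7)^3+25(n-7)^2-477(n-7)-1808,
\]
so two coefficients are negative. The same failure occurs for $p=3$: $M(8,3)=2(8^4-12\cdot 8^3-2\cdot 8^2-16+54)<0$, so $M(n,3)>0$ does not hold from $n=8$ on. The paper instead expands about $n=13$ and obtains $M(n,2)>0$ (and $M(n,3)>0$) only for $n\ge 13$.

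The consequence is that your step (ii) leaves an unhandled range: for $p=2$ and $7\le n\le 12$ (and for $p=3$ and $8\le n\le 12$) the quartic $H_{n,p}$ is \emph{not} convex, yet you must still show it has exactly two positive roots. There is no single sign to check here; one must examine each of these finitely many quartics individually (the paper does this case by case, supported by graphs). So your ``finite computation'' in step (iii) must be enlarged from three cases to nine for $p=2$ and from two to seven for $p=3$, and in the non-convex two-root cases you need a genuine argument (e.g.\ Sturm sequences, or locating the local extrema and checking the sign of $H_{n,p}$ there) rather than the convexity shortcut. A minor additional point: in the four-root cases the extra pair of roots must lie \emph{inside} the interval $\bigl(\tfrac{2(n-p-1)}{n+p-1},2\bigr)$, not outside it, since $x_4>0$ is required for an Einstein metric; your localization remark in (iii) has this backwards.
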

\begin{proof}  (1)\ For  $p = 2$ we have that
       \begin{eqnarray*} 
 \ \   & &   M(n, 2) = n^4-13 n^3+4 n^2+6 n+12 \\
 & & =  (n-13)^4+39 (n-13)^3+511 (n-13)^2+2307 (n-13)+766. 
     \end{eqnarray*}  
    Thus we see that  
    $\displaystyle \frac{d^2 H_{n, 2}}{d{ x_2}^2} > 0 $  for $ n \geq 13$,  and hence,  $\displaystyle \frac{d H_{n, 2}}{d{ x_2}}(x_2) $ is monotone increasing and the  polynomial $ H_{n, 2}(x_2)$ has only one local minimum for  $ n \geq 13$. Thus the equation $ H_{n, 2}(x_2) = 0$ has exactly two positive solutions for $ n \geq 13$. 
    For $ 4 \leq n \leq 12$, we consider   polynomials $ H_{n, 2}(x_2)$ one by one and we see that, for $ 7 \leq n \leq 12$ the equation $ H_{n, 2}(x_2) = 0$ has  two positive solutions, and for $ 4 \leq n \leq 6$ the equation $ H_{n, 2}(x_2) = 0$ has four positive solutions.

   \begin{figure}[htbp]
\begin{minipage}{0.32\linewidth}
\begin{center}
\includegraphics[width=0.99\linewidth]{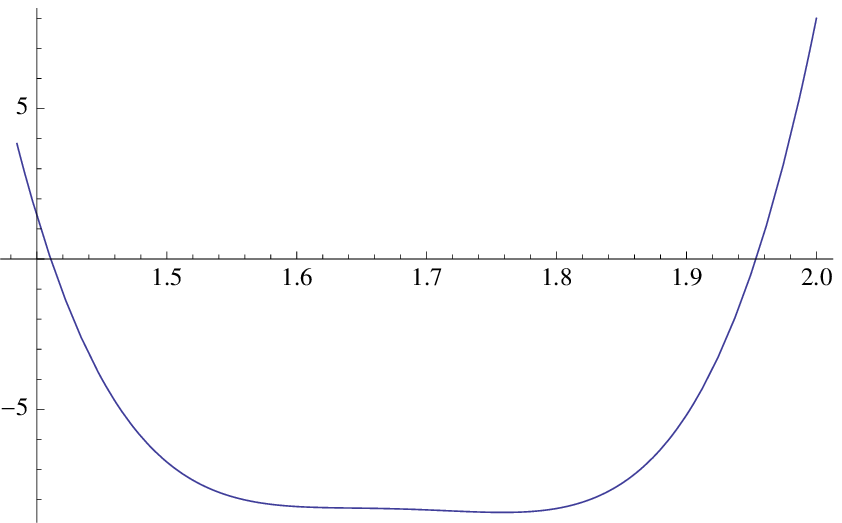}

\footnotesize $H_{12, 2}(x_2)$
\end{center}
\end{minipage}
\begin{minipage}{0.32\linewidth}
\begin{center}
\includegraphics[width=0.99\linewidth]{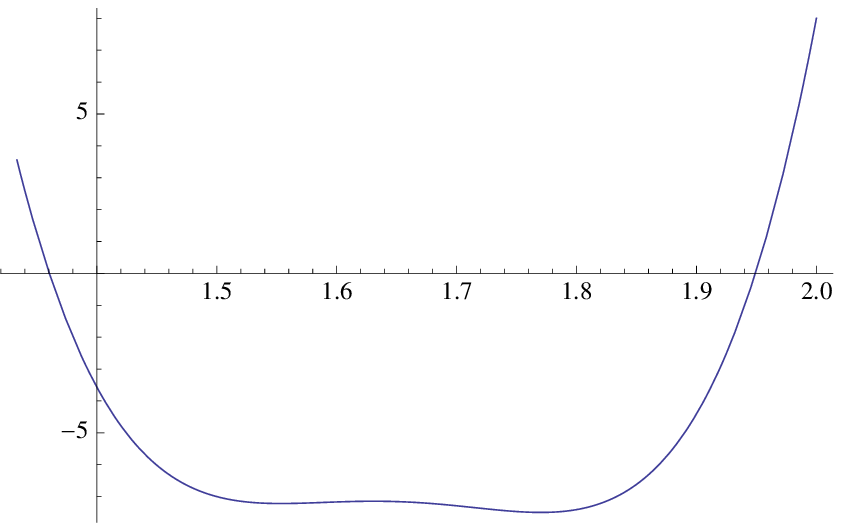}

\footnotesize $H_{11, 2}(x_2)$
\end{center}
\end{minipage}
\begin{minipage}{0.32\linewidth}
\begin{center}
\includegraphics[width=0.99\linewidth]{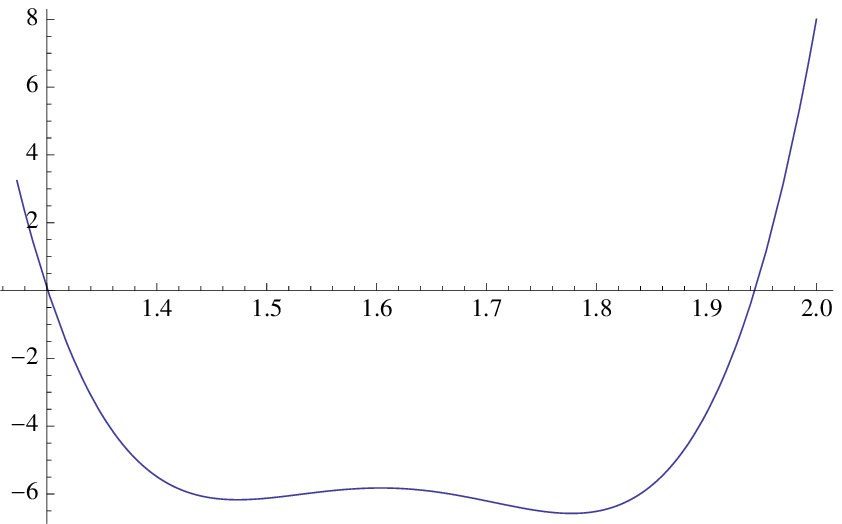}

\footnotesize $H_{10, 2}(x_2)$
\end{center}
\end{minipage}
\end{figure}
   \begin{figure}[htbp]
\begin{minipage}{0.32\linewidth}
\begin{center}
\includegraphics[width=0.99\linewidth]{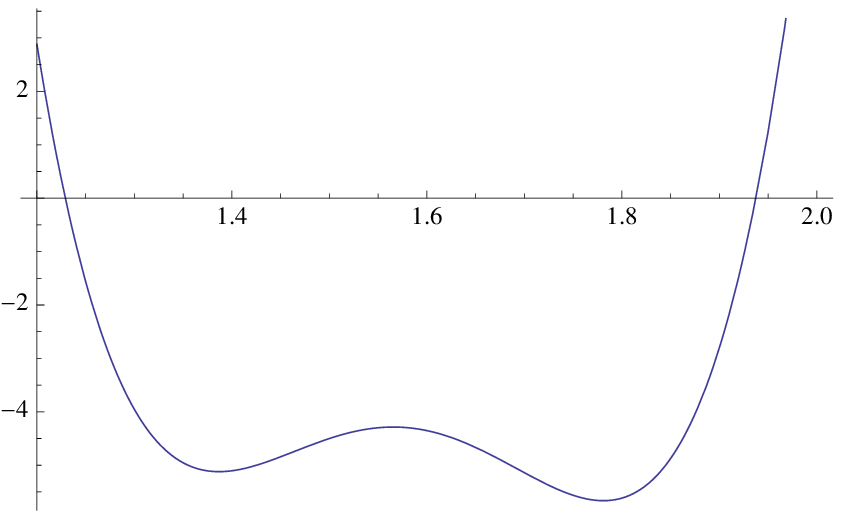}

\footnotesize $H_{9, 2}(x_2)$
\end{center}
\end{minipage}
\begin{minipage}{0.32\linewidth}
\begin{center}
\includegraphics[width=0.99\linewidth]{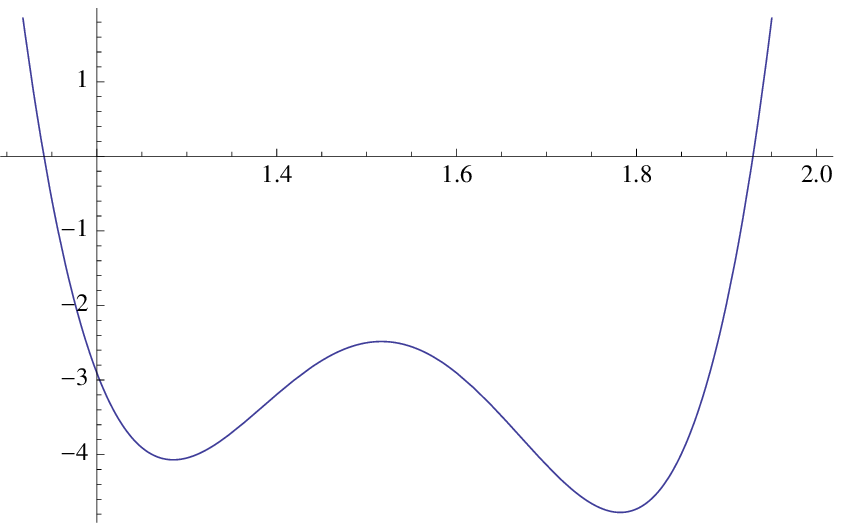}

\footnotesize $H_{8, 2}(x_2)$
\end{center}
\end{minipage}
\begin{minipage}{0.32\linewidth}
\begin{center}
\includegraphics[width=0.99\linewidth]{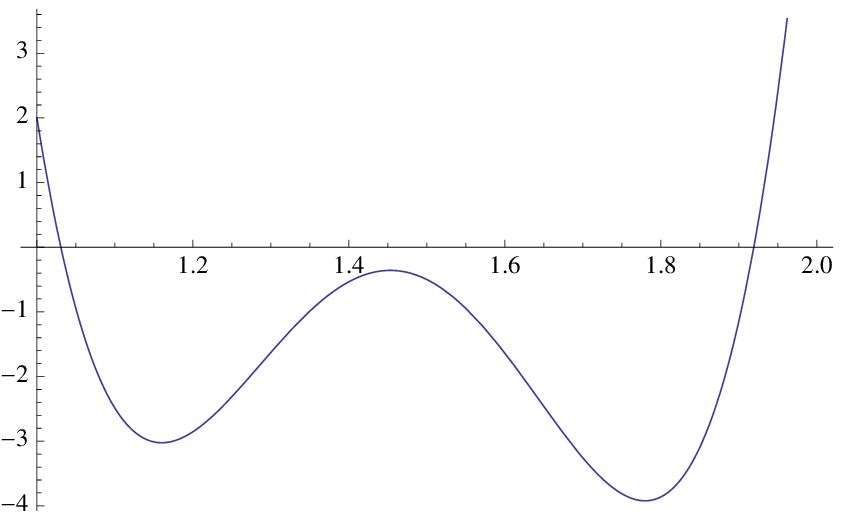}

\footnotesize $H_{7, 2}(x_2)$
\end{center}
\end{minipage}
\end{figure}
   \begin{figure}[htbp]
\begin{minipage}{0.32\linewidth}
\begin{center}
\includegraphics[width=0.99\linewidth]{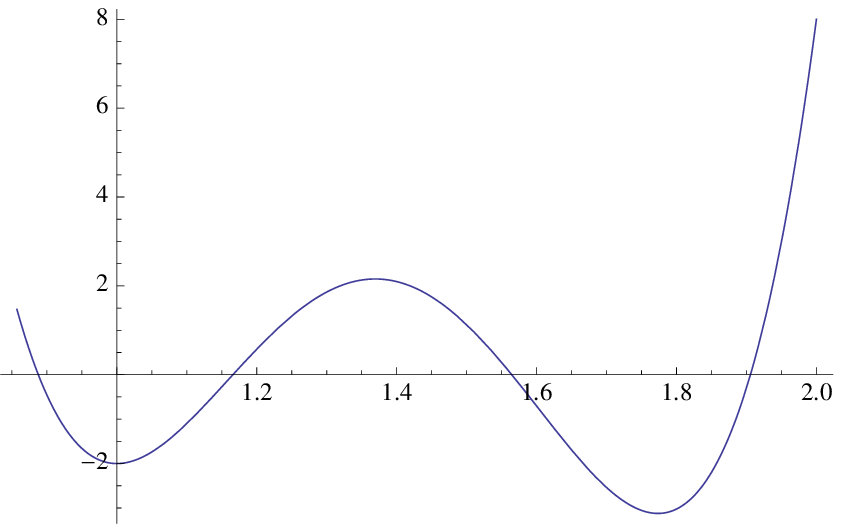}

\footnotesize $H_{6, 2}(x_2)$
\end{center}
\end{minipage}
\begin{minipage}{0.32\linewidth}
\begin{center}
\includegraphics[width=0.99\linewidth]{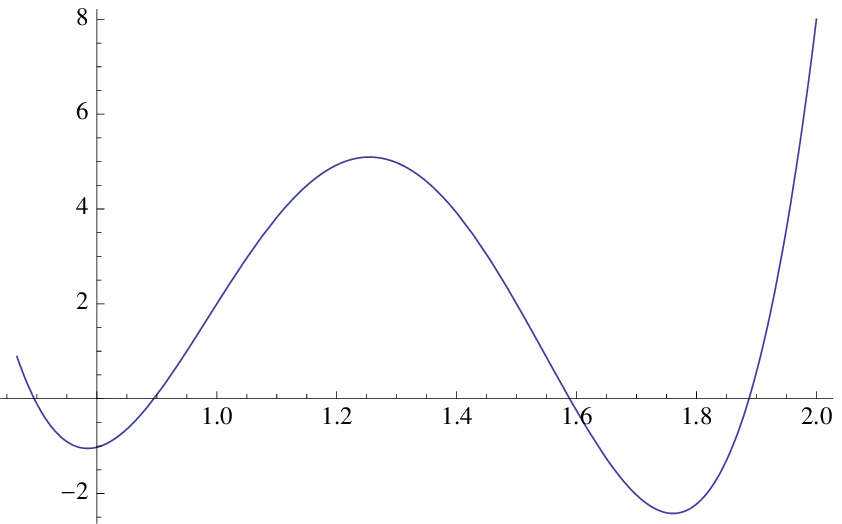}

\footnotesize $H_{5, 2}(x_2)$
\end{center}
\end{minipage}
\begin{minipage}{0.32\linewidth}
\begin{center}

\end{center}
\end{minipage}
\end{figure}

(2)\   For  $p =3$ we have that
       \begin{eqnarray*} 
 \ \   & &   M(n, 3) = 2 \left(n^4-12 n^3-2 n^2-2 n+54\right) \\
 & & =  (n-13)^4+40 (n-13)^3+544 (n-13)^2+2650 (n-13)+1887. 
     \end{eqnarray*}  
    Thus we see that  
    $\displaystyle \frac{d^2 H_{n, 3}}{d{ x_2}^2} > 0 $  for $ n \geq 13$,  and hence,  $\displaystyle \frac{d H_{n, 3}}{d{ x_2}}(x_2) $ is monotone increasing and the  polynomial $ H_{n, 3}(x_2)$ has only one local minimum for  $ n \geq 13$. Thus the equation $ H_{n, 3}(x_2) = 0$ has exactly two positive solutions for $ n \geq 13$. 
    For $ 6 \leq n \leq 12$, we consider   polynomials $ H_{n, 3}(x_2)$ one by one and we see that, for $ 8 \leq n \leq 12$ the equation $ H_{n, 3}(x_2) = 0$ has  two positive solutions, and for $ 6 \leq n \leq 7$ the equation $ H_{n, 3}(x_2) = 0$ has four positive solutions.  
    
\begin{minipage}{.32\linewidth}
\begin{center}
\includegraphics[width=0.95\linewidth]{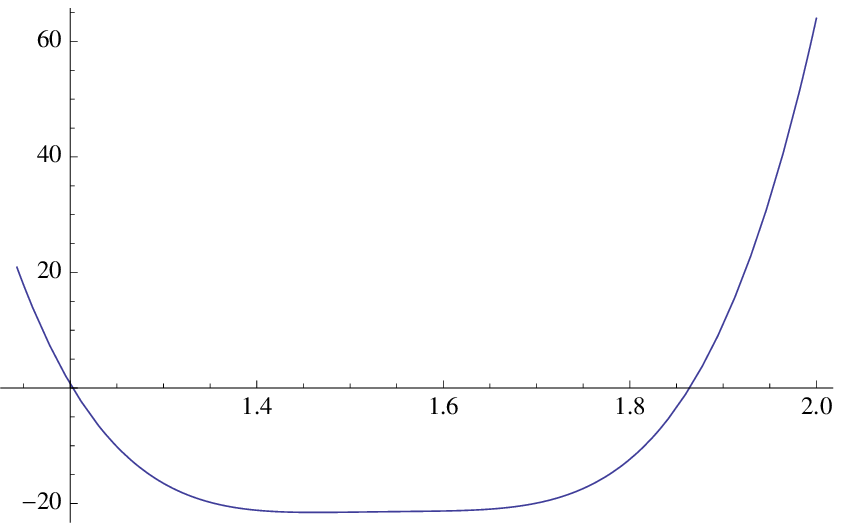}

\footnotesize $H_{12, 3}(x_2)$
\end{center}
\end{minipage}
\begin{minipage}{0.32\linewidth}
\begin{center}
\includegraphics[width=0.95\linewidth]{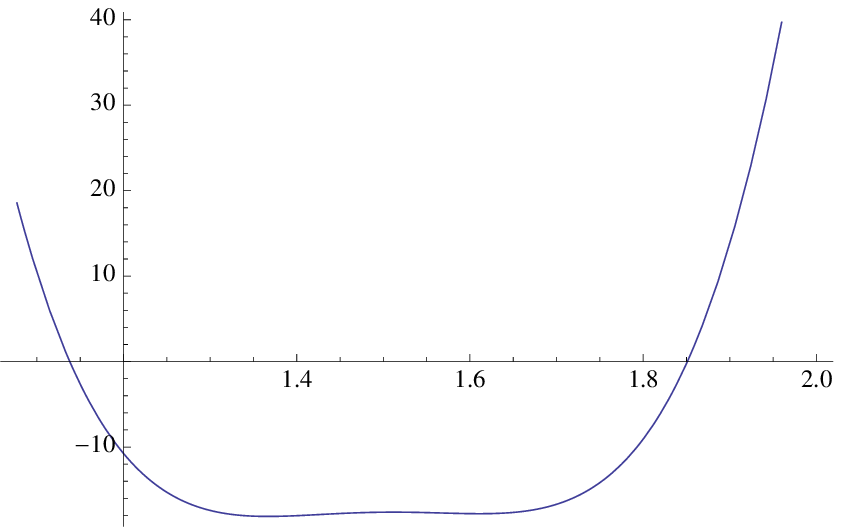}

\footnotesize $H_{11, 3}(x_2)$
\end{center}
\end{minipage}
\begin{minipage}{0.32\linewidth}
\begin{center}
\includegraphics[width=0.95\linewidth]{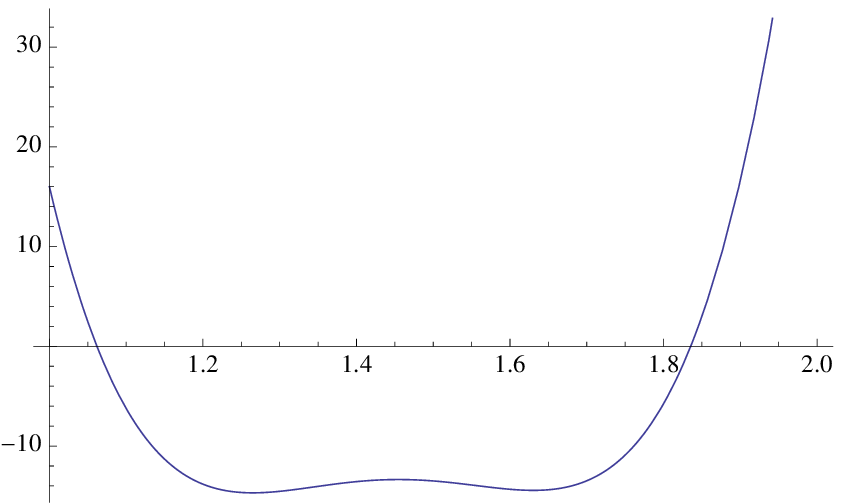}

\footnotesize $H_{10, 3}(x_2)$
\end{center}
\end{minipage}

\begin{minipage}{.32\linewidth}
\begin{center}
\includegraphics[width=0.95\linewidth]{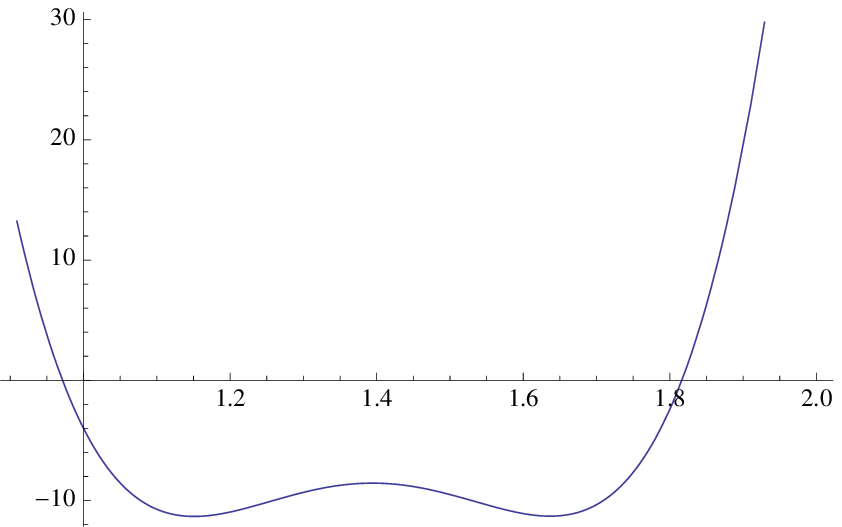}

\footnotesize $H_{9, 3}(x_2)$
\end{center}
\end{minipage}
\begin{minipage}{0.32\linewidth}
\begin{center}
\includegraphics[width=0.95\linewidth]{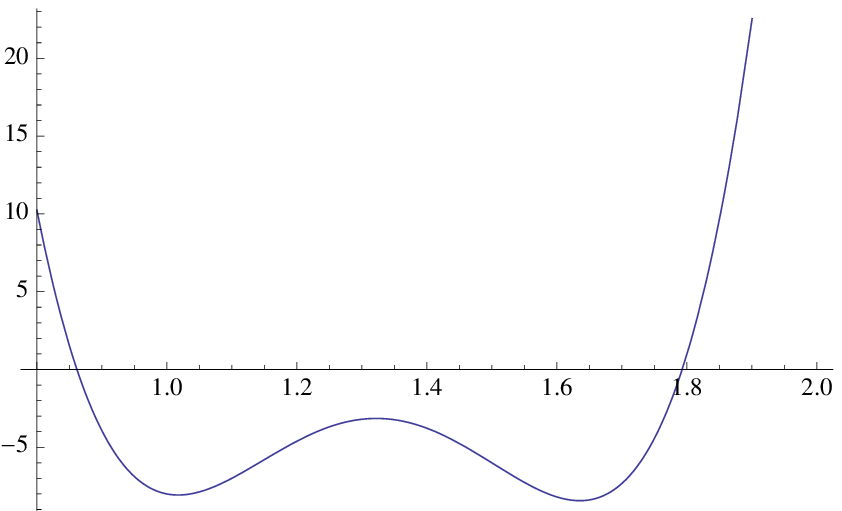}

\footnotesize $H_{8, 3}(x_2)$
\end{center}
\end{minipage}
\begin{minipage}{0.32\linewidth}
\begin{center}
\includegraphics[width=0.95\linewidth]{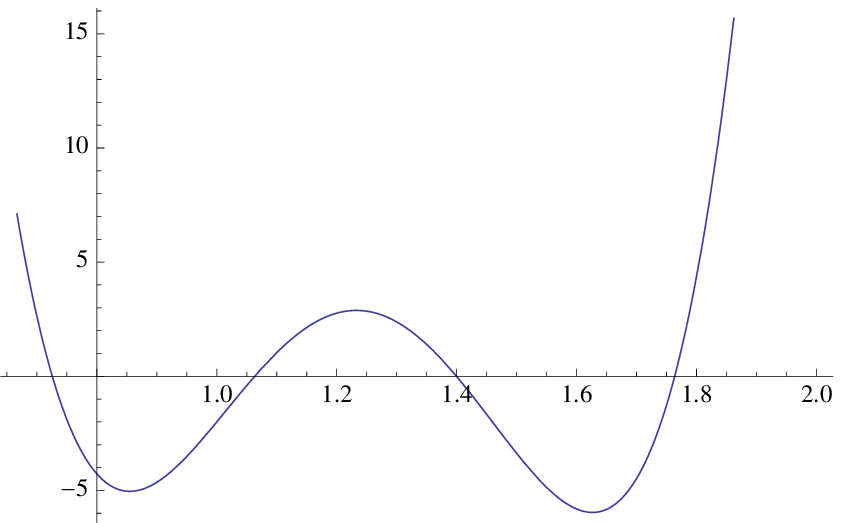}

\footnotesize $H_{7, 3}(x_2)$
\end{center}
\end{minipage}

\begin{minipage}{.32\linewidth}
\begin{center}
\includegraphics[width=0.95\linewidth]{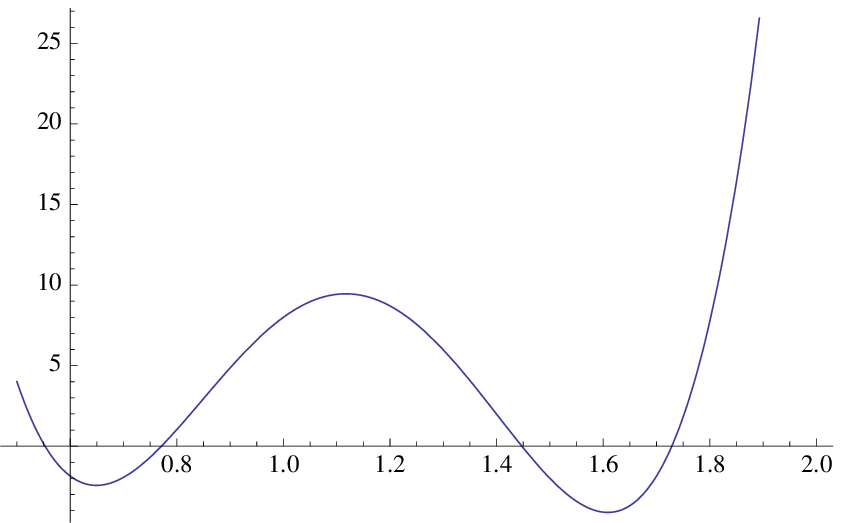}

\footnotesize $H_{6, 3}(x_2)$
\end{center}
\end{minipage}
\end{proof}

Next, we consider the case when $2p\le n\le 2p+4$.  We may assume that $p\ge 4$.

\begin{lemma}\label{n=2p}  Let $n=2p$.  Then the equation $ H_{2 p, p}(x_2) = 0$ has exactly two positive solutions  for $ p \geq 7$ and    
four positive solutions for $4 \leq p\leq 6$.  
\end{lemma}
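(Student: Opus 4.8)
The plan is to exploit the special symmetry that the system acquires when $n=2p$. Setting $n=2p$ in equations (\ref{1}) and (\ref{2}), they become
\[
(3p-1)x_2^2-4(2p-1)x_2+4(p-1)+(p-1)x_2x_4=0,\qquad
(3p-1)x_4^2-4(2p-1)x_4+4(p-1)+(p-1)x_2x_4=0,
\]
which are interchanged under $x_2\leftrightarrow x_4$. Subtracting them factors as
\[
(x_2-x_4)\bigl((3p-1)(x_2+x_4)-4(2p-1)\bigr)=0 .
\]
Since (\ref{1}) is linear in $x_4$ with nonzero coefficient $(p-1)x_2$, each value $x_2\ne 0$ determines $x_4$ uniquely via (\ref{3}); moreover $x_2=0$ never solves (\ref{1}) (it would force $4(p-1)=0$). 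Hence the projection $(x_2,x_4)\mapsto x_2$ identifies the solutions of the system with the roots of $H_{2p,p}$, and I would enumerate the former along the two branches above.

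On the branch $x_2=x_4$, substitution into (\ref{1}) gives $(2p-1)x_2^2-2(2p-1)x_2+2(p-1)=0$, with discriminant $4(2p-1)>0$, so this branch always contributes the two positive roots $x_2=1\pm 1/\sqrt{2p-1}$; a short computation shows that both lie in the interval $\bigl(2(p-1)/(3p-1),\,2\bigr)$, so these are exactly the two roots furnished by Proposition \ref{existence}. On the branch $x_2+x_4=\tfrac{4(2p-1)}{3p-1}$, substituting $x_4=\tfrac{4(2p-1)}{3p-1}-x_2$ into (\ref{1}) gives
\[
p(3p-1)x_2^2-4p(2p-1)x_2+2(p-1)(3p-1)=0 .
\]
A direct expansion (or a comparison of roots together with leading coefficients, using (\ref{4})) shows that $H_{2p,p}(x_2)$ equals $2$ times the product of these two quadratics, so all four roots of the quartic are thereby accounted for; and the roots of the second quadratic, when real, are both positive because their product $\tfrac{2(p-1)}{p}$ and their sum $\tfrac{4(2p-1)}{3p-1}$ are positive.

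It then remains only to decide when the second quadratic has real roots. Its discriminant is $8p\,\bigl(-p^3+7p^2-5p+1\bigr)$, so I would study $f(p)=-p^3+7p^2-5p+1$: one computes $f(4)=29$, $f(5)=26$, $f(6)=7$ (all positive) and $f(7)=-34<0$, while for $p\ge 7$ the cubic $p^3-7p^2+5p-1$ has positive derivative $3p^2-14p+5$ and value $34$ at $p=7$, hence stays positive, so $f(p)<0$ for every $p\ge 7$. Consequently, for $p\ge 7$ the second quadratic has no real root and $H_{2p,p}(x_2)=0$ has exactly the two positive solutions $1\pm 1/\sqrt{2p-1}$, whereas for $4\le p\le 6$ it contributes two more positive roots, which a case-by-case inspection of $p=4,5,6$ (equivalently, a resultant computation) shows to be distinct from $1\pm 1/\sqrt{2p-1}$; hence there are exactly four positive solutions. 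The only point requiring care is this last bookkeeping --- justifying the factorization into the two quadratics and the distinctness of their roots for $4\le p\le 6$ --- since the rest reduces to the elementary discriminant analysis above.
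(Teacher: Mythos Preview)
Your argument is correct and lands on exactly the same factorization
\[
H_{2p,p}(x_2)=2\bigl((2p-1)x_2^2-2(2p-1)x_2+2(p-1)\bigr)\bigl(p(3p-1)x_2^2-4p(2p-1)x_2+2(p-1)(3p-1)\bigr)
\]
and the same discriminant analysis of the second factor that the paper uses. The only difference is how the factorization is obtained: the paper simply states it (presumably via direct computation), whereas you derive it conceptually by observing that for $n=2p$ the system (\ref{1})--(\ref{2}) is symmetric under $x_2\leftrightarrow x_4$, so subtracting the equations splits the solution set into the diagonal branch $x_2=x_4$ and the anti-diagonal branch $x_2+x_4=\tfrac{4(2p-1)}{3p-1}$, each yielding one quadratic. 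This is a genuinely nicer explanation of \emph{why} the quartic factors, and it also explains the relation (\ref{7}) in the special case $n=2p$. Your additional check that the two quadratics share no root for $p=4,5,6$ is a point the paper leaves implicit; it is needed for the count ``exactly four'' and is easily confirmed, e.g.\ by noting that their resultant $8(2p-1)(p-1)(p^2-p+1)$ never vanishes.
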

\begin{proof}  We see that 
    \begin{eqnarray*} 
    & &  \ \   H_{2 p, p}(x_2) = 2 \left((2 p-1) {x_2}^2-2 (2 p-1) {x_2}+2
   (p-1)\right)\times \\
 & &    \left(p (3 p-1) {x_2}^2-4 p (2
   p-1) {x_2}+2 (p-1) (3 p-1)\right)
     \end{eqnarray*} 
     Thus,   the four solutions  of the equation   $ H_{2 p, p}(x_2) = 0$ are given by 
    \begin{equation}\label{400}
    (a) \ x_2 = \frac{2 p\pm\sqrt{2 p-1}-1}{2 p-1},  \quad  (b) \
    x_2 = \frac{2 p (2 p-1) \pm \sqrt{2} \sqrt{-p \left(p^3-7 p^2+5 p-1\right)}}{p (3 p-1)}.
   \end{equation}
 Since   $-p \left(p^3-7 p^2+5 p-1\right)$ is negative for $ p \geq 7$,  we  see that the equation $ H_{2 p, p}(x_2) = 0$ has exactly two positive solutions  $ p \geq 7$ and    four positive solutions for $4 \leq p\leq 6$.  
    \begin{figure}[htbp]
\begin{minipage}{.32\linewidth}
\begin{center}
\includegraphics[width=0.95\linewidth]{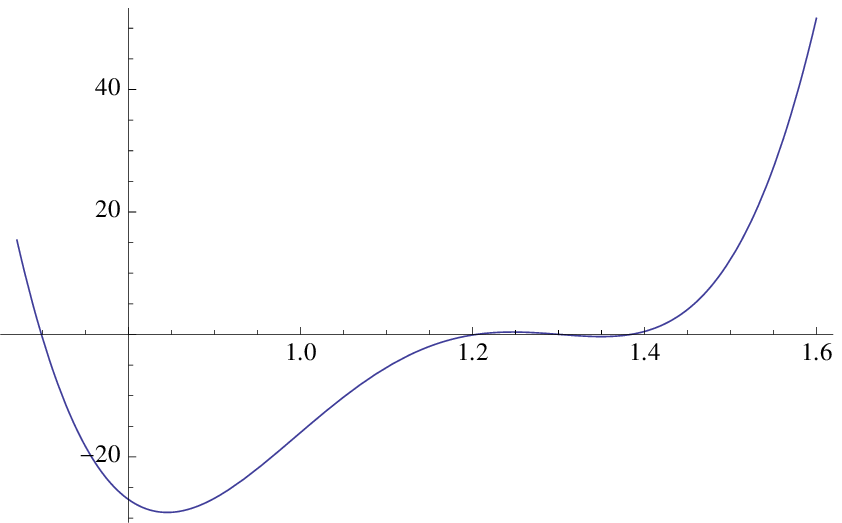}

\footnotesize $H_{12, 6}(x_2)$
\end{center}
\end{minipage}
\begin{minipage}{0.32\linewidth}
\begin{center}
\includegraphics[width=0.95\linewidth]{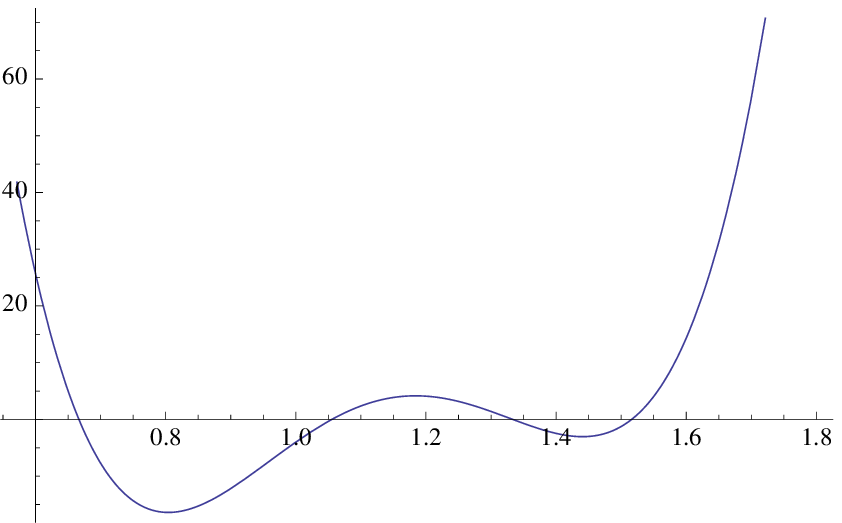}

\footnotesize $H_{10, 5}(x_2)$
\end{center}
\end{minipage}
\begin{minipage}{0.32\linewidth}
\begin{center}
\includegraphics[width=0.95\linewidth]{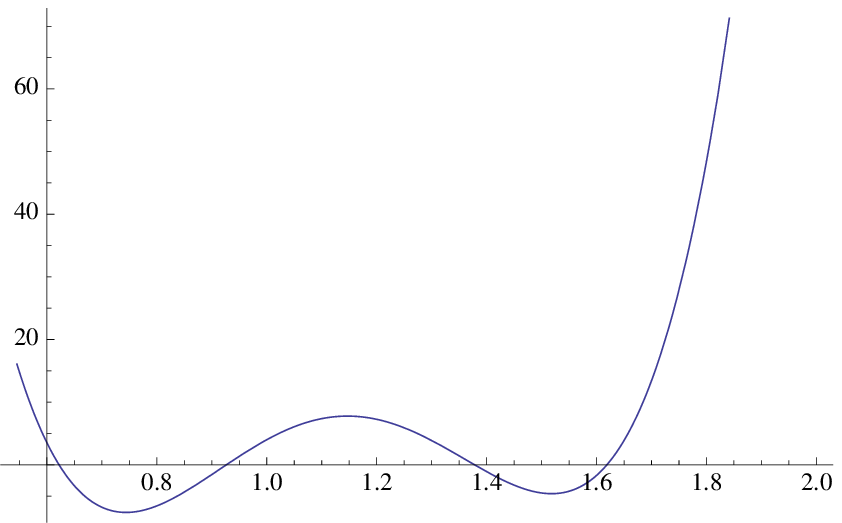}

\footnotesize $H_{8, 4}(x_2)$
\end{center}
\end{minipage}
\end{figure}

\end{proof}

\begin{lemma}\label{n=2p+1} Let $n=2p+1$.  Then for $p\ge 4$ the equation $ H_{2 p+1, p}(x_2) = 0$ has exactly two positive solutions.  
\end{lemma}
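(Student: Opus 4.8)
The plan is to treat the case $n=2p+1$ along the lines of Lemmas \ref{2p+5} and \ref{n=2p}: reduce, for all but finitely many $p$, to a convexity statement coming from the second derivative of $H_{2p+1,p}$, and then settle the remaining values of $p$ by direct inspection of the corresponding quartics.

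First I would substitute $n=2p+1$ into $H_{n,p}$, obtaining
\begin{eqnarray*}
H_{2p+1,p}(x_2)&=&6p^2(2p+1)\,x_2^4-8p^2(7p+5)\,x_2^3+2\left(50p^3+36p^2+3p-1\right)x_2^2\\
& &-\,16p^2(5p+3)\,x_2+8p^2(3p+1).
\end{eqnarray*}
Under $x_2\mapsto -x_2$ all five coefficients become positive (for $p\ge1$), so $H_{2p+1,p}$ has no root in $(-\infty,0]$; hence every real root is positive. A short computation also gives $H'_{2p+1,p}\!\left(\tfrac23\right)=-\tfrac{8}{9}(p-1)(8p^2+6p-3)<0$ and $H'_{2p+1,p}(2)=8(p-1)(4p^2-2p+1)>0$, while $\tfrac{d^2H_{2p+1,p}}{dx_2^2}$ is positive at $x_2=0$, at $x_2=\tfrac23$ and at $x_2=2$ and attains its minimum at $\tfrac{7p+5}{6p+3}\in\left(\tfrac23,2\right)$; consequently $H_{2p+1,p}$ is decreasing on $\left(0,\tfrac23\right]$ and increasing on $[2,\infty)$, and since $H_{2p+1,p}\!\left(\tfrac23\right)=\tfrac{8(p-1)^3}{9}>0$ and $H_{2p+1,p}(2)=8(p-1)^3>0$, all real roots lie in the open interval $\left(\tfrac23,2\right)$. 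By Proposition \ref{existence} there are at least two of them, so it remains only to prove there are at most two.

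For $p$ large this follows from the argument of Lemma \ref{2p+5}: specializing the minimum value of $\tfrac{d^2H_{n,p}}{dx_2^2}$ to $n=2p+1$ gives, up to the positive factor $\tfrac{4}{n(n+p-1)}$,
$$M(2p+1,p)=3p\left(2p^4-18p^3-8p^2+p-1\right),$$
and writing the quartic factor in powers of $p-10$,
$$2p^4-18p^3-8p^2+p-1=2(p-10)^4+62(p-10)^3+652(p-10)^2+2441(p-10)+1209,$$
shows $M(2p+1,p)>0$ for $p\ge10$. Thus for $p\ge10$ the function $\tfrac{d^2H_{2p+1,p}}{dx_2^2}$ is everywhere positive, $H_{2p+1,p}$ is convex, and has exactly two positive roots. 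For the six remaining pairs $(n,p)=(9,4),(11,5),(13,6),(15,7),(17,8),(19,9)$ I would argue exactly as in Lemmas \ref{p=2_3} and \ref{n=2p}, examining $H_{2p+1,p}(x_2)$ case by case — using the localization of the roots in $\left(\tfrac23,2\right)$ together with Sturm sequences (and the fact that $\tfrac{d^2H_{2p+1,p}}{dx_2^2}>0$ on $\left[\tfrac23,\tfrac{2p}{2p+1}\right]$, which confines one root to $\left(\tfrac23,\tfrac{2p}{2p+1}\right)$) — to conclude that in each case there are exactly two positive roots.

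The main obstacle is precisely this last step. In the range $4\le p\le9$ one has $M(2p+1,p)<0$, so $\tfrac{d^2H_{2p+1,p}}{dx_2^2}$ genuinely changes sign and the convexity argument is unavailable; moreover in this range $H_{2p+1,p}$ stays very close to zero over a whole subinterval of $\left(\tfrac23,2\right)$, so one must rule out extra sign changes, e.g. by verifying the relevant Sturm sequences or by checking the sign of $H_{2p+1,p}$ at its inflection points. Since there are only finitely many such $p$, this is a finite verification and completes the proof.
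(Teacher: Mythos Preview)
Your approach is correct and essentially the same as the paper's: both show convexity of $H_{2p+1,p}$ via $M(2p+1,p)>0$ for $p\ge 10$ and then treat $4\le p\le 9$ by finite inspection. For that finite range the paper is more concrete than your Sturm-sequence sketch: it writes down the larger root $\beta$ of $\frac{d^2H_{2p+1,p}}{dx_2^2}=0$ and evaluates $\frac{dH_{2p+1,p}}{dx_2}(\beta)$ directly, finding it positive for $6\le p\le 9$ (so the cubic $H'$ has a single real zero and $H_{2p+1,p}$ has exactly one local minimum), and negative only for $p=4,5$, where a direct look at the quartic on $(2/3,2)$ finishes the job. Your extra localization argument (no nonpositive roots, all roots confined to $(2/3,2)$) is a nice addition that the paper does not make explicit in this lemma.
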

\begin{proof}  We see that 
     \begin{eqnarray*}  
  & & \ \   H _{2 p+1,  p}(x_2) = 6 p^2 (2 p+1) {x_2}^4-8 p^2 (7 p+5)
   {x_2}^3+2 \left(50 p^3+36 p^2+3 p-1\right)
   {x_2}^2 \\
   & & -16 p^2 (5 p+3) {x_2}+8 p^2 (3
   p+1)
     \end{eqnarray*}
      and 
      \begin{eqnarray*}  
      & &  \frac{d^2 H_{2 p +1, p}}{d{ x_2}^2}\left( \frac{2 n^2-2 n-p^2+p}{n (n+p-1)} \right) = \frac{4 \left(2 p^4-18 p^3-8 p^2+p-1\right)}{2 p+1}\\
      & & 
=  \frac{4 \left(2 (p-9)^4+54 (p-9)^3+478 (p-9)^2+1315 (p-9)-640\right)}{2 p+1}. 
 \end{eqnarray*}
  Thus we see that 
     $\displaystyle \frac{d^2 H_{2 p +1, p}}{d{ x_2}^2} > 0 $  for  $p \geq 10$ and hence,  $\displaystyle \frac{d H_{2 p+1, p}}{d{ x_2}}(x_2) $ is monotone increasing and the  polynomial $ H_{2 p+1, p}(x_2)$ has only one local minimum for   $p \geq 10$. Thus the equation $ H_{2 p +1, p}(x_2) = 0$ has exactly two positive solutions. 

 For $4 \leq p \leq 9$,   we see that $\displaystyle \frac{d^2 H_{2 p+1, p}}{d{ x_2}^2}\left( \frac{2 n^2-2 n-p^2+p}{n (n+p-1)} \right)$ is negative and two real solutions $\alpha, \beta$ of the quadratic equation $\displaystyle \frac{d^2 H_{2 p +1, p}}{d{ x_2}^2}  = 0 $ are given by 
   \begin{eqnarray*} 
   &  &  \alpha = \frac{2 p^2 (7
   p+5)-\sqrt{2} \sqrt{-p^2 \left(2 p^4-18 p^3-8
   p^2+p-1\right)}}{6 \left(2
   p^3+p^2\right)}, \\
    &  & \beta = \frac{2 p^2 (7
   p+5) + \sqrt{2} \sqrt{-p^2 \left(2 p^4-18 p^3-8
   p^2+p-1\right)}}{6 \left(2
   p^3+p^2\right)}. 
      \end{eqnarray*}  
 Since the polynomial $\displaystyle \frac{d H_{2 p +1, p}}{d{ x_2}}(x_2) $ of degree 3 takes a local minimum at  $\displaystyle x_2 = \beta$, we consider the value $\displaystyle \frac{d H_{2 p +1, p}}{d{ x_2}}(\beta) $. We see that 
    \begin{eqnarray*} 
& & \ \ \  \frac{d H_{2 p +1, p}}{d{ x_2}}(\beta) = 
\frac{2}{9 p^4 (2 p+1)^2} \left(2 (p-1)^2 \left(8 p^3-14 p^2-36
   p-15\right) p^4 \right. \\
    & &  \left. +2 \sqrt{2} \left(2 p^4-18 p^3-8
   p^2+p-1\right) \sqrt{-p^2 \left(2 p^4-18 p^3-8
   p^2+p-1\right)} p^2\right). 
 \end{eqnarray*}  
       By evaluating the above expression for the integers $4 \leq p \leq 9$, 
       we see that  
$\displaystyle \frac{d H_{2 p+1, p}}{d{ x_2}}(\beta)  > 0 $ for $ 6 \leq p \leq 9$ and $\displaystyle \frac{d H_{2 p+1, p}}{d{ x_2}}(\beta)  <  0 $ for $ 4 \leq p \leq 5$.  Thus the  polynomial $ H_{2 p+1, p}(x_2)$ has only one local minimum for  $6 \leq p \leq 9$,  and        
  $ H_{2 p +1, p}(x_2)$ has two local minima  and one local maximum  for  $4 \leq p \leq 5$.   However, we see that 
  for $ p = 4, 5$ the equation $ H_{2 p+1, p}(x_2) = 0$ has exactly two  roots  between $\displaystyle  \frac{2(n-p-1)}{(n+p-1)} = \frac{2}{3}$ and $2$, and this completes the proof.  
    \begin{figure}[htbp]
    \begin{center}
\begin{minipage}{.32\linewidth}
\begin{center}
\includegraphics[width=0.95\linewidth]{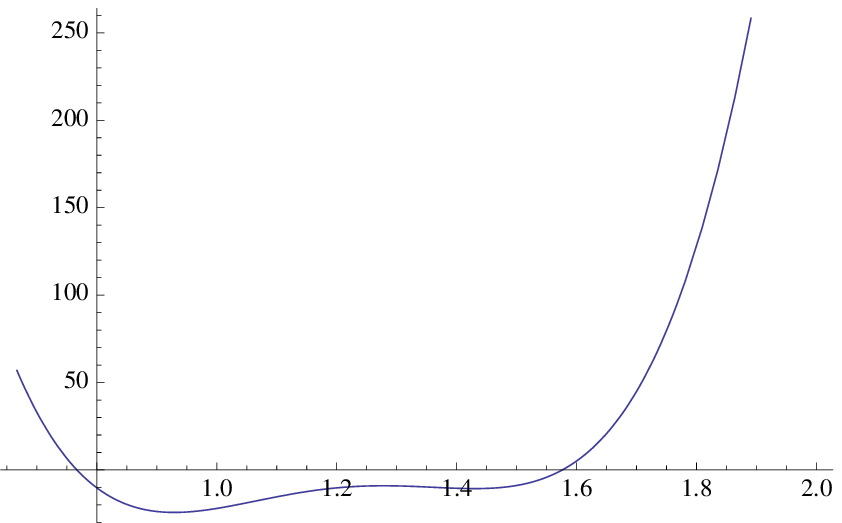}

\footnotesize $H_{11, 5}(x_2)$
\end{center}
\end{minipage}
\begin{minipage}{0.32\linewidth}
\begin{center}
\includegraphics[width=0.95\linewidth]{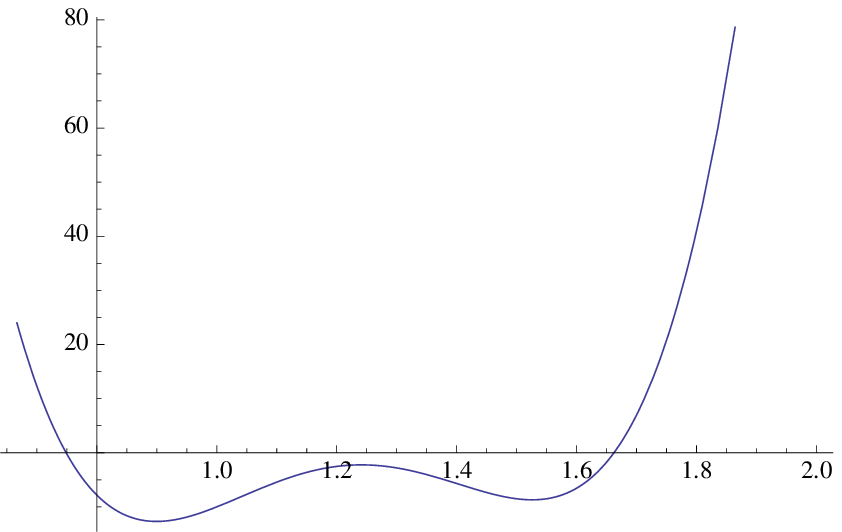}

\footnotesize $H_{9, 4}(x_2)$
\end{center}
\end{minipage}
\end{center}
\end{figure}
 
\end{proof}

\begin{lemma}\label{n=2p+2} Let $n=2p+2$.  Then for $p\ge 4$ the equation $ H_{2 p+2, p}(x_2) = 0$ has exactly two positive solutions.  
\end{lemma}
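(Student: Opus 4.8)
The plan is to imitate the proof of Lemma~\ref{n=2p+1}. First I would set $n=2p+2$ in equation~(\ref{4}) and record $H_{2p+2,p}(x_2)$ explicitly as a quartic in $x_2$ with coefficients polynomial in $p$; its leading coefficient $(n-1)n(n+p-1)\big|_{n=2p+2}=(2p+1)(2p+2)(3p+1)$ and its constant term $8(n-p-1)^{2}(2n-p-1)\big|_{n=2p+2}=24(p+1)^{3}$ are both positive. By Proposition~\ref{existence} the equation $H_{2p+2,p}(x_2)=0$ already has at least two roots in $\big(\frac{2(p+1)}{3p+1},\,2\big)$; since $H_{2p+2,p}(0)>0$ and $H_{2p+2,p}(x_2)\to+\infty$, such a quartic with positive leading term can have exactly $0$, $2$ or $4$ positive roots (counted with multiplicity), so it suffices to rule out a third and a fourth. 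As in Lemma~\ref{2p+5}, this is automatic once $\frac{d^{2}H_{2p+2,p}}{dx_2^{2}}>0$ on $(0,\infty)$, because then $\frac{dH_{2p+2,p}}{dx_2}$ is strictly increasing, so $H_{2p+2,p}$ has a unique critical point, which is a minimum. Since $\frac{d^{2}H_{2p+2,p}}{dx_2^{2}}$ is a quadratic in $x_2$ with positive leading coefficient, this holds precisely when its minimum value $\frac{4}{n(n+p-1)}M(n,p)$ is positive, i.e.\ when $M(2p+2,p)>0$.

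Next I would compute $M(2p+2,p)$ as a polynomial in $p$ and, by expanding it in powers of $p-8$, check that it is positive for all $p\ge 8$, which settles those cases. (The values $M(10,4)=-9348$, $M(12,5)=-15504$, $M(14,6)=-19672$ and $M(16,7)=-16020$ are negative, so the range $4\le p\le 7$ genuinely requires separate treatment.) For these four pairs the quadratic $\frac{d^{2}H_{2p+2,p}}{dx_2^{2}}$ has two positive zeros $\alpha<\beta$ — both positive because, for $n=2p+2$, both the sum $\frac{2(2n^{2}-2n-p^{2}+p)}{n(n+p-1)}$ and the product of the roots are positive — the cubic $\frac{dH_{2p+2,p}}{dx_2}$ has a local maximum at $\alpha$ and a local minimum at $\beta$, and I would evaluate $\frac{dH_{2p+2,p}}{dx_2}(\beta)$ exactly as in Lemma~\ref{n=2p+1}. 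When this value is positive the cubic has a single real zero, so $H_{2p+2,p}$ has a unique critical point on $(0,\infty)$; it must be a minimum, because $H_{2p+2,p}$ cannot be increasing on all of $(0,\infty)$ (by the proof of Proposition~\ref{existence}, $H_{2p+2,p}\big(\frac{2(n-p-1)}{n}\big)=H_{2p+2,p}(1)<0$), so again there are exactly two positive roots. For any of $p=4,5,6,7$ for which $\frac{dH_{2p+2,p}}{dx_2}(\beta)\le 0$, so that $H_{2p+2,p}$ may acquire two local minima, I would inspect the polynomials $H_{10,4}$, $H_{12,5}$, $H_{14,6}$, $H_{16,7}$ one by one, as in the proofs of Lemmas~\ref{p=2_3}, \ref{n=2p} and~\ref{n=2p+1}, and verify that only two of their real roots are positive.

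The step I expect to be the main obstacle is deciding the sign of $\frac{dH_{2p+2,p}}{dx_2}(\beta)$ for the intermediate values of $p$: as in Lemma~\ref{n=2p+1}, $\beta$ carries a surd $\sqrt{-p^{2}(\cdots)}$, so $\frac{dH_{2p+2,p}}{dx_2}(\beta)$ is a sum of a polynomial in $p$ and a polynomial multiple of that surd, and its sign has to be extracted by isolating the surd and squaring (with attention to signs) rather than by a clean monotonicity argument, after which the few small-$p$ graphs must still be checked directly. Had $M(2p+2,p)$ been positive for all $p\ge 4$, the lemma would follow at once from the first paragraph; the sample values above show it is not, so this finer analysis is unavoidable.
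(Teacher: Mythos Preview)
Your proposal is correct and follows essentially the same route as the paper: compute $M(2p+2,p)=6p^{5}-35p^{4}-88p^{3}-51p^{2}-20p-4$, expand it in powers of $p-8$ to settle $p\ge 8$, then for $4\le p\le 7$ evaluate $\frac{dH_{2p+2,p}}{dx_2}(\beta)$ (positive for $p=5,6,7$, negative only for $p=4$) and finish the case $p=4$ by direct inspection of $H_{10,4}$. Your extra remarks (positivity of $\alpha,\beta$ via sum/product, and using $H_{2p+2,p}(1)<0$ to confirm the unique critical point is a minimum) are small clarifications the paper leaves implicit, and the sign of $\frac{dH}{dx_2}(\beta)$ is handled simply by plugging in the four integer values rather than by isolating the surd.
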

\begin{proof} We see that 
     \begin{eqnarray*}  
  & & \ \   H _{2 p+2,  p}(x_2) = (2 p+1) (2 p+2) (3 p+1){x_2}^4-4 (2 p+1) \left(7 p^2+13 p+4\right) {x_2}^3 \\
  & & 
+4(p+1) \left(25 p^2+42 p+11\right){x_2}^2-8 (p+1) (2 p+1) (5 p+7)
   {x_2}+24 (p+1)^3 
        \end{eqnarray*}
      and 
      \begin{eqnarray*}  
      & &  \frac{d^2 H_{2 p +2, p}}{d{ x_2}^2}\left( \frac{2 n^2-2 n-p^2+p}{n (n+p-1)} \right) = \frac{2 ( 6 p^5-35 p^4-88 p^3-51 p^2-20 p-4)}{(3 p+1)(p+1)}\\
      & & 
=  \frac{2 \left(6 (p-8)^5+205 (p-8)^4+2632 (p-8)^3+15117
   (p-8)^2+33468 (p-8)+4764\right)}{(3 p+1)(p+1)}. 
 \end{eqnarray*}
  Thus we see that 
     $\displaystyle \frac{d^2 H_{2 p +2, p}}{d{ x_2}^2} > 0 $  for  $p \geq 8$ and hence,  $\displaystyle \frac{d H_{2 p+2, p}}{d{ x_2}}(x_2) $ is monotone increasing and the  polynomial $ H_{2 p+2, p}(x_2)$ has only one local minimum for   $p \geq 8$. Thus the equation $ H_{2 p+2, p}(x_2) = 0$ has exactly two positive solutions. 

For $4 \leq p \leq 7$,   we see that $\displaystyle \frac{d^2 H_{2 p+2, p}}{d{ x_2}^2}\left( \frac{2 n^2-2 n-p^2+p}{n (n+p-1)} \right)$ is negative and the two real solutions $\alpha, \beta$ of the quadratic equation $\displaystyle \frac{d^2 H_{2 p +2, p}}{d{ x_2}^2}  = 0 $ are given by 
   \begin{eqnarray*} 
   &  &  \alpha = \frac {3 (2 p + 1) \left(7 p^2 + 13 p +  4 \right) - \sqrt{3} \sqrt{(-2 p - 1) \left(6 p^5 -  35 p^4 - 88 p^3 - 51 p^2 - 20 p - 4 \right)}} {6 (p + 1) (2 p + 1) (3 p + 1)}, \\
    &  & \beta = \frac {3 (2 p + 1) \left(7 p^2 + 13 p + 4 \right) + \sqrt{3} \sqrt{(-2 p - 1) \left(6 p^5 -  35 p^4 - 88 p^3 - 51 p^2 - 20 p - 4 \right)}} {6 (p + 1) (2 p + 1) (3 p + 1)}. 
      \end{eqnarray*}  
 Since the polynomial $\displaystyle \frac{d H_{2 p +2, p}}{d{ x_2}}(x_2) $ of degree 3 takes local minimum at  $\displaystyle x_2 = \beta$, we consider the value $\displaystyle \frac{d H_{2 p +2, p}}{d{ x_2}}(\beta) $. We see that 
    \begin{eqnarray*} 
& & \ \ \  \frac{d H_{2 p +2, p}}{d{ x_2}}(\beta) = 
\frac{1}{9 (p+1)^2 (2 p+1) (3 p+1)^2} \left(18 (2 p+1) \left(4 p^2+7 p+2\right) \right. \times \\
&  &
   \left(p^3-p^2-6 p-2\right) (p-1)^2 
  + 2 \sqrt{3}
   \left(6 p^5-35 p^4-88 p^3-51 p^2-20 p-4\right)\times \\
&  & \left.  \sqrt{(-2 p-1) \left(6 p^5-35 p^4-88 p^3-51
   p^2-20 p-4\right)}\right). 
 \end{eqnarray*}  
       By substituting integer $4 \leq p \leq 7$, we see that  $\displaystyle \frac{d H_{2 p+2, p}}{d{ x_2}}(\beta)  > 0 $ for $ 5 \leq p \leq 7$ and $\displaystyle \frac{d H_{2 p+2, p}}{d{ x_2}}(\beta)  <  0 $ for $ p= 4$.  Thus the  polynomial $ H_{2 p+2, p}(x_2)$ has only one local minimum for  $5 \leq p \leq 7$,  and        
  $ H_{2 p +2, p}(x_2)$ has two local minima  and one local maximum  for  $ p = 4$.   However, we see that 
  for $ p = 4$ the equation $ H_{2 p+2, p}(x_2) = 0$ has exactly two  roots  between $\displaystyle  \frac{2(n-p-1)}{(n+p-1)} = \frac{2(p +1)}{3p +1}$ and $2$.  
 \begin{figure}[htbp]
    \begin{center}
\begin{minipage}{.32\linewidth}
\begin{center}
\includegraphics[width=0.95\linewidth]{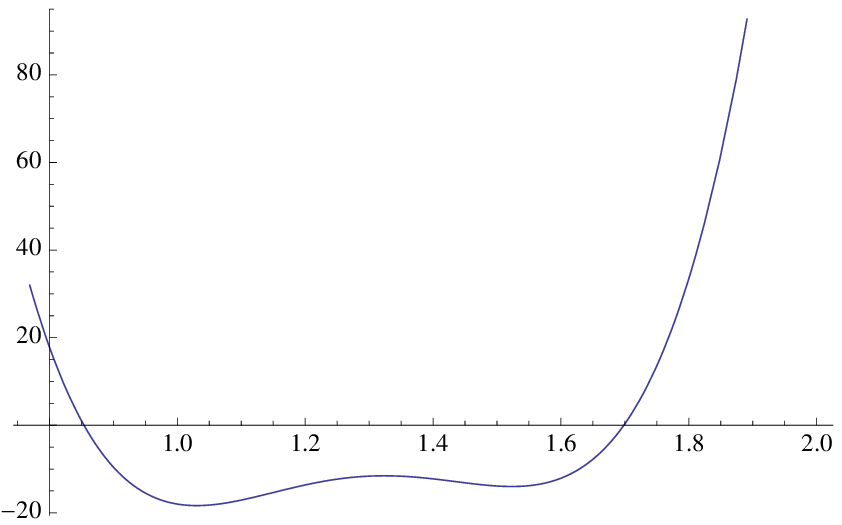}

\footnotesize $H_{10, 4}(x_2)$
\end{center}
\end{minipage}
\end{center}
\end{figure}

\end{proof}  

By a similar method we can prove the next two lemmas.

\begin{lemma}\label{n=2p+3} Let $n=2p+3$.  Then for $p\ge 4$ the equation $ H_{2 p+3, p}(x_2) = 0$ has exactly two positive solutions.  
\end{lemma}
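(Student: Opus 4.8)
The plan is to argue exactly as in the proofs of Lemmas \ref{n=2p+1} and \ref{n=2p+2}. First I would substitute $n=2p+3$ into $H_{n,p}(x_2)$ from (\ref{4}), obtaining an explicit quartic
\[
H_{2p+3,p}(x_2)=a_4(p)x_2^4+a_3(p)x_2^3+a_2(p)x_2^2+a_1(p)x_2+a_0(p),
\]
with $a_4(p)=(2p+2)(2p+3)(3p+2)>0$ and $a_0(p)=8(p+2)^2(3p+5)>0$, so that $H_{2p+3,p}(x_2)\to+\infty$ as $x_2\to\pm\infty$ and $H_{2p+3,p}(0)>0$. By Proposition \ref{existence} this equation already has at least two roots in $\big(\tfrac{2(p+2)}{3p+2},\,2\big)$, so it suffices to show that $H_{2p+3,p}$ has no further positive root, and for this I only need to control its number of local extrema.

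Next I would examine the quadratic $\displaystyle\frac{d^2H_{2p+3,p}}{dx_2^2}$, whose leading coefficient $12(n-1)n(n+p-1)$ is positive, so it is minimized at $x_2=\displaystyle\frac{2n^2-2n-p^2+p}{n(n+p-1)}$. Evaluating it there gives a rational function of $p$ whose numerator $N(p)$ is a one-variable polynomial; rewriting $N(p)$ in powers of $p-p_0$ for a suitable integer threshold $p_0$ (exactly as was done with $M(n,p)$ in the proof of Lemma \ref{2p+5}) makes all its coefficients positive, whence $N(p)>0$ and $\displaystyle\frac{d^2H_{2p+3,p}}{dx_2^2}>0$ on all of $\mathbb{R}$ for $p\ge p_0$. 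For such $p$ the cubic $\displaystyle\frac{dH_{2p+3,p}}{dx_2}$ is strictly increasing, so $H_{2p+3,p}$ has a single local minimum; together with $H_{2p+3,p}(0)>0$, $H_{2p+3,p}(2)>0$ and $H_{2p+3,p}\big(\tfrac{2(p+2)}{2p+3}\big)<0$ (the last three from Proposition \ref{existence} specialized to $n=2p+3$), this forces exactly two positive roots.

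It then remains to treat the finitely many integers $4\le p<p_0$. For each of them the discriminant of $\displaystyle\frac{d^2H_{2p+3,p}}{dx_2^2}$ is positive, so this quadratic has two real roots $\alpha<\beta$ and the cubic $\displaystyle\frac{dH_{2p+3,p}}{dx_2}$ attains its local minimum at $\beta$. I would compute $\displaystyle\frac{dH_{2p+3,p}}{dx_2}(\beta)$ in closed form — a rational expression containing one square root, just as in Lemmas \ref{n=2p+1} and \ref{n=2p+2} — and evaluate it at each remaining $p$. Where $\displaystyle\frac{dH_{2p+3,p}}{dx_2}(\beta)>0$, the derivative has no sign change, $H_{2p+3,p}$ again has a unique local minimum, and there are exactly two positive roots. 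Where $\displaystyle\frac{dH_{2p+3,p}}{dx_2}(\beta)<0$, the quartic has two local minima and one local maximum and hence a priori up to four positive roots; in those few cases I would work directly with the concrete integer-coefficient polynomial $H_{2p+3,p}(x_2)$ and count its positive roots (e.g.\ by a Sturm sequence, or by isolating sign changes), using $H_{2p+3,p}(0)>0$, $H_{2p+3,p}\big(\tfrac{2(p+2)}{3p+2}\big)>0$, $H_{2p+3,p}\big(\tfrac{2(p+2)}{2p+3}\big)<0$, $H_{2p+3,p}(2)>0$ and the behaviour at infinity, to conclude that there are still exactly two.

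The step I expect to be the main obstacle is this last one: in the cases with $\displaystyle\frac{dH_{2p+3,p}}{dx_2}(\beta)<0$ one must genuinely exclude the extra pair of positive roots, i.e.\ show that the value of $H_{2p+3,p}$ at its \emph{second} local minimum is positive; the coefficients of the quartic are bulkier here than in the $n=2p,\,2p+1,\,2p+2$ cases, which makes that sign estimate the delicate point, while everything else is bookkeeping parallel to the preceding lemmas.
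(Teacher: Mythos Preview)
Your proposal is correct and follows essentially the same approach the paper intends: the paper explicitly states that Lemmas \ref{n=2p+3} and \ref{n=2p+4} are proved ``by a similar method'' to Lemmas \ref{n=2p+1} and \ref{n=2p+2}, namely by substituting $n=2p+3$, showing $\frac{d^2H_{2p+3,p}}{dx_2^2}>0$ for $p$ beyond some threshold via a Taylor-shift positivity argument on the minimum value, and then handling the finitely many remaining $p$ by evaluating $\frac{dH_{2p+3,p}}{dx_2}(\beta)$ and, where necessary, checking the explicit quartic directly. One small wording slip: when $\frac{dH_{2p+3,p}}{dx_2}(\beta)>0$ the cubic derivative still has exactly one real root (not ``no sign change''), which is what gives the unique local minimum of $H_{2p+3,p}$; the conclusion you draw from it is nonetheless correct.
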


\begin{lemma}\label{n=2p+4} Let $n=2p+4$.  Then for $p\ge 4$ the equation $ H_{2 p+4, p}(x_2) = 0$ has exactly two positive solutions.  
\end{lemma}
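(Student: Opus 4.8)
\textbf{Proof proposal for Lemmas \ref{n=2p+3} and \ref{n=2p+4}.}
The plan is to repeat, for $k=3$ and $k=4$, the convexity argument used in Lemmas \ref{2p+5}, \ref{n=2p+1} and \ref{n=2p+2}. First I would substitute $n=2p+k$ into the quartic $H_{n,p}(x_2)$ of $(\ref{4})$ to obtain $H_{2p+k,p}(x_2)$, whose coefficients are explicit polynomials in $p$. The second derivative $\dfrac{d^{2}H_{2p+k,p}}{dx_2^{2}}$ is a quadratic in $x_2$ with positive leading coefficient, so it is enough to control its value at the vertex $x_2=\dfrac{2n^{2}-2n-p^{2}+p}{n(n+p-1)}$ with $n=2p+k$; this value equals a positive rational factor times $M(2p+k,p)$, where $M(n,p)$ is the polynomial introduced in the proof of Lemma \ref{2p+5}.

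Next I would expand the one-variable polynomial $M(2p+k,p)$ in powers of $p-p_0$, choosing the least integer $p_0\ge 4$ for which every coefficient is positive (the analogue of the thresholds $p\ge 10$ and $p\ge 8$ met in the cases $n=2p+1$ and $n=2p+2$). This certifies $\dfrac{d^{2}H_{2p+k,p}}{dx_2^{2}}>0$ for all $x_2$ whenever $p\ge p_0$, hence $\dfrac{dH_{2p+k,p}}{dx_2}$ is strictly increasing, $H_{2p+k,p}$ has a single critical point, which is a minimum, and combined with the sign pattern recorded in Proposition \ref{existence} (positive at $\tfrac{2(n-p-1)}{n+p-1}$ and at $2$, negative at $\tfrac{2(n-p-1)}{n}$) this forces exactly two positive roots and no others.

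For the finitely many remaining values $4\le p<p_0$ I would argue exactly as in Lemmas \ref{n=2p+1} and \ref{n=2p+2}: there the vertex value of $\dfrac{d^{2}H_{2p+k,p}}{dx_2^{2}}$ is negative, so $\dfrac{d^{2}H_{2p+k,p}}{dx_2^{2}}=0$ has two real roots $\alpha<\beta$, and the cubic $\dfrac{dH_{2p+k,p}}{dx_2}$ has a local maximum at $\alpha$ and a local minimum at $\beta$. I would then evaluate the explicit radical expression $\dfrac{dH_{2p+k,p}}{dx_2}(\beta)$ at each integer $p$ in the range. When it is nonnegative, $\dfrac{dH_{2p+k,p}}{dx_2}\ge 0$ on $(0,\infty)$, so $H_{2p+k,p}$ again has a single local minimum and exactly two positive roots. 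When it is negative, $H_{2p+k,p}$ has two local minima and one local maximum, and one then checks directly for that specific $p$ that $H_{2p+k,p}(x_2)=0$ still has exactly two roots inside the admissible interval $\bigl(\tfrac{2(n-p-1)}{n+p-1},\,2\bigr)$ guaranteed by Proposition \ref{existence}; that is, the second dip of $H_{2p+k,p}$ stays positive there, possibly visualised by plotting $H_{2p+3,p}$ and $H_{2p+4,p}$ for the handful of relevant $p$, as in the earlier lemmas.

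The main obstacle is the algebraic bookkeeping: pinning down $p_0$ and certifying positivity of the shifted coefficients of $M(2p+k,p)$, and, for the low-$p$ tail, evaluating $\dfrac{dH_{2p+k,p}}{dx_2}(\beta)$ and, in the cases where it is negative, performing the explicit root count of the corresponding quartic. In contrast with the genuinely exceptional cases $n=2p$ with $p\le 6$, $p=2$ with $n\le 6$, and $p=3$ with $n\le 7$, I expect that for $n=2p+3$ and $n=2p+4$ any spurious local minimum of $H_{2p+k,p}$ remains strictly positive on $\bigl(\tfrac{2(n-p-1)}{n+p-1},\,2\bigr)$, so that no additional non-K\"ahler Einstein metric is produced; confirming this uniformly is the point I expect to demand the most careful case-by-case checking.
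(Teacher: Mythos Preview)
Your proposal is correct and follows exactly the approach the paper intends: it explicitly states that Lemmas \ref{n=2p+3} and \ref{n=2p+4} are proved ``by a similar method'' to Lemmas \ref{n=2p+1} and \ref{n=2p+2}, i.e.\ by controlling the sign of $M(2p+k,p)$ to force convexity for large $p$ and then handling the finitely many small $p$ via the local minimum $\dfrac{dH_{2p+k,p}}{dx_2}(\beta)$ and, if necessary, a direct root count. One small imprecision: when $\dfrac{dH_{2p+k,p}}{dx_2}(\beta)\ge 0$ you should conclude only that the cubic $\dfrac{dH_{2p+k,p}}{dx_2}$ has a single real root (hence $H_{2p+k,p}$ has a unique local minimum), not that $\dfrac{dH_{2p+k,p}}{dx_2}\ge 0$ on all of $(0,\infty)$, since Proposition \ref{existence} already forces that derivative to change sign inside $(0,2)$.
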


Therefore we have obtained the following:

\begin{prop}\label{solutions}
{\rm{(1)}}\ If $x_1 = x_3$ and $n \geq 2p$, then $M$ admits exactly four  $SO(2n )$-invariant Einstein metrics for the pairs $ (n, p) = (12, 6)$, $ (10, 5)$, $ (8, 4)$, $ (7, 3)$, $ (6, 3)$,  $(6, 2)$, $(5, 2)$, $ (4, 2)$ and two  $SO(2n )$-invariant Einstein metrics for all other cases.  

\noindent
{\rm{(2)}}\  If $x_1 = x_3$ and $ n \leq 2 p$, then $M$ admits exactly four  $SO(2n )$-invariant Einstein metrics for the pairs $ (n, p) = (12, 6)$, $ (10, 5)$, $ (8, 4)$, $ (7, 4)$, $(6, 4)$, $ (6, 3)$,  $(5, 3)$,  $ (4, 2)$ and two  $SO(2n )$-invariant Einstein metrics for all other cases.
\end{prop}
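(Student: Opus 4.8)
\emph{Proof proposal.} The plan is to convert the assertion into a question about how many positive roots of $H_{n,p}$ fall into one fixed open interval, and then to assemble the lemmas already established. First I would record the dictionary coming from CASE A: with $x_1=x_3=1$ fixed, a solution of $r_1-r_2=0,\ r_3-r_4=0$ with $x_2,x_4>0$ is exactly a pair satisfying (\ref{1})--(\ref{2}); by (\ref{3}) such a pair is determined by its first coordinate $x_2$, which must then be a positive root of $H_{n,p}$, and the remark following (\ref{3}) identifies the condition $x_4>0$ with $x_2$ lying in $I_{n,p}:=\big(\tfrac{2(n-p-1)}{n+p-1},\,2\big)$ (note $I_{n,p}\subset(0,2)$ since $p\le n-2$). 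Hence the number of $SO(2n)$-invariant Einstein metrics with $x_1=x_3$ equals the number of roots of $H_{n,p}$ lying in $I_{n,p}$; dually, via (\ref{5}), it equals the number of roots of $G_{n,p}$ in $\big(\tfrac{2(p-1)}{2n-p-1},\,2\big)$.

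For part (1) assume $n\ge 2p$. By Proposition~\ref{existence} at least two roots of $H_{n,p}$ already lie in $I_{n,p}$, and $H_{n,p}$ does not vanish at either endpoint of $I_{n,p}$ (the two boundary values computed in the proof of Proposition~\ref{existence} are strictly positive). Therefore, whenever $H_{n,p}=0$ has exactly two positive roots, both lie in $I_{n,p}$ and there are exactly two such metrics. It remains to run through the exhaustive list of admissible pairs with $n\ge 2p$, namely $p=2$, $p=3$, and $p\ge 4$ with $n\in\{2p,\,2p+1,\,2p+2,\,2p+3,\,2p+4\}$ or $n\ge 2p+5$, invoking Lemmas~\ref{2p+5}, \ref{p=2_3}, \ref{n=2p}, \ref{n=2p+1}, \ref{n=2p+2}, \ref{n=2p+3} and \ref{n=2p+4}. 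Together these say that $H_{n,p}=0$ has exactly two positive roots except for the eight pairs $(12,6),(10,5),(8,4),(7,3),(6,3),(6,2),(5,2),(4,2)$, where it has four; for those pairs one must still check that all four positive roots lie in $I_{n,p}$ (so that the count of metrics is four, not three or two), which for $n=2p$ is immediate from the explicit roots (\ref{400}) and for the remaining small pairs is read off the plotted graphs of $H_{n,p}$. This gives (1).

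For part (2) assume $n\le 2p$ and pass to $G_{n,p}$, using $G_{n,p}(x_4)=H_{n,n-p}(x_4)$ from (\ref{7}). Put $p':=n-p$; then $n\le 2p$ is equivalent to $n\ge 2p'$, and $2\le p'\le n-2$, so $(n,p')$ is an admissible pair of the type handled in part (1). Moreover the interval $\big(\tfrac{2(p-1)}{2n-p-1},\,2\big)$ attached to $G_{n,p}$ coincides with $I_{n,p'}$, since $p-1=n-p'-1$ and $2n-p-1=n+p'-1$. Hence the number of Einstein metrics with $x_1=x_3$ for $(n,p)$ equals the number of roots of $H_{n,p'}$ in $I_{n,p'}$, which by part (1) is four precisely when $(n,p')$ is one of the eight pairs above and two otherwise. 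Substituting $p=n-p'$ rewrites that list as $(12,6),(10,5),(8,4),(7,4),(6,4),(6,3),(5,3),(4,2)$, which is exactly the exceptional list in the statement; in particular at $n=2p$ one has $p'=p$, so the two descriptions agree there.

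The algebra underlying each lemma is already done, so the remaining work is organizational, and the point I expect to need the most care is the control of the interval $I_{n,p}$: verifying that the case split for $n\ge 2p$ is genuinely exhaustive, confirming in every four-root case that no root escapes $I_{n,p}$ (otherwise the metric count could drop below four), and checking that the substitution $p\mapsto n-p$ used in part (2) is compatible with part (1) along the common boundary $n=2p$. The most delicate instances are the ``exceptional within the exceptional'' pairs arising in Lemmas~\ref{n=2p+1} and~\ref{n=2p+2}, where $H_{n,p}$ acquires two local minima yet still has only two roots inside $I_{n,p}$; it is precisely to settle these that the graphs of $H_{n,p}$ are displayed.
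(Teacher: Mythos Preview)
Your proposal is correct and follows essentially the same route as the paper: part (1) is assembled from Proposition~\ref{existence} together with Lemmas~\ref{2p+5}--\ref{n=2p+4}, and part (2) is reduced to part (1) via the symmetry $G_{n,p}=H_{n,n-p}$ of (\ref{7}) under $p\mapsto n-p$. Your write-up is in fact more careful than the paper's in making explicit the dictionary between Einstein metrics and roots of $H_{n,p}$ in the interval $I_{n,p}$, and in flagging that in the four-root cases one must verify all four roots actually lie in $I_{n,p}$.
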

\begin{proof} Part (1) is a consequence of Proposition \ref{existence} and Lemmas \ref{2p+5} -- \ref{n=2p+4}.
For (2), we consider the equation $ G_{n, p}(x_4) = 0$,  and the result follows from the relation (\ref{7}).  
\end{proof}
  \noindent\underline{CASE B}
Let  
\begin{eqnarray}  
&  & \ \  - \,  x_1 x_2 + p x_1 x_2 - x_2 x_3 + p x_2 x_3 - x_1 x_4   + n x_1 x_4  -  p x_1 x_4  \nonumber  \\   
&  &  + \,   2  x_2  x_4  - 2 n x_2 x_4 - x_3 x_4 + n x_3 x_4 - p x_3 x_4  =0,   \label{31}
 \end{eqnarray}
 and set $x_1=1$. From equation (\ref{31}) we obtain that 
  \begin{eqnarray} 
  x_3 =  \frac{2 ( n-1) {x_2} {x_4}-(n - p - 1) {x_4}-(p- 1) {x_2}}{( n - p -1){x_4}+( p - 1){x_2}}.  \label{41}
 \end{eqnarray}
 We need to show the following:
  \begin{prop}\label{kahler}  The system of equations  
 $ r_1 - r_2 =0$ 
 and $ r_3 - r_4 =0$ has no positive solutions, except K\"ahler-Einstein metrics.  
 \end{prop}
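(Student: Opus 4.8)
The plan is to imitate the elimination strategy of CASE~A. Normalizing $x_1 = 1$ and substituting the expression (\ref{41}) for $x_3$ into the two Einstein equations $r_1 - r_2 = 0$ and $r_3 - r_4 = 0$, one clears the positive quantities occurring as denominators, namely powers of $x_2, x_4$ coming from the Ricci components together with the numerator and the denominator of (\ref{41}); all of these are strictly positive on $\{x_2, x_3, x_4 > 0\}$, since $p - 1 \ge 1$ and $n - p - 1 \ge 1$ in the admissible range. This produces two polynomials $P(x_2, x_4),\, Q(x_2, x_4) \in \mathbb{Z}[n,p][x_2, x_4]$, and a triple with $x_2, x_4 > 0$ and $x_3 = x_3(x_2, x_4) > 0$ is a positive solution of the CASE~B system if and only if it is a common zero of $P$ and $Q$. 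The two K\"ahler-Einstein metrics of (\ref{KE}), rescaled so that $x_1 = 1$, are visibly among these common zeros, and the goal is to show they exhaust them (for $n = 2p$ they coincide, giving the single K\"ahler-Einstein metric recorded after (\ref{KE})).

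The next step is to eliminate one variable, say $x_4$, passing to $R(x_2) := \mathrm{Res}_{x_4}(P, Q) \in \mathbb{Z}[n,p][x_2]$, so that every positive solution of the CASE~B system projects to a positive root of $R(x_2)$. The expectation, to be confirmed by the computation, is a factorization of the form
\[
R(x_2) = c\, x_2^{a}\,\bigl(n x_2 - 2(n+p-1)\bigr)^{b_1}\bigl(n x_2 - 2(n-p-1)\bigr)^{b_2}\, S_{n,p}(x_2),
\]
with $c \in \mathbb{Z}[n,p]$, the two linear forms vanishing at the $x_2$-coordinates of the K\"ahler-Einstein metrics $g_1$ and $g_2$, and a single remaining factor $S_{n,p}(x_2)$. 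The proposition then reduces to showing that $S_{n,p}(x_2)$ has no positive real root for any admissible $(n,p)$: that the two linear roots lift back, through $Q = 0$ and (\ref{41}), to exactly the metrics $g_1, g_2$ with all coordinates positive is a routine substitution, which also pins down the Einstein constant.

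To control $S_{n,p}$ I would first use the symmetry of $M$ under interchanging its two unitary factors, which acts on metrics (up to scale) by $[x_1:x_2:x_3:x_4] \mapsto [x_3:x_4:x_1:x_2]$ together with $p \mapsto n-p$ and preserves the CASE~B constraint (\ref{31}); this is the symmetry responsible for the identity (\ref{7}) in CASE~A, and it lets us restrict to $2 \le p \le n/2$, and even to carry out only one of the two possible eliminations. Then, exactly as in the proofs of Lemmas \ref{2p+5}, \ref{n=2p}, \ref{n=2p+1} and \ref{n=2p+2}, I would re-expand the coefficients of $S_{n,p}(x_2)$ in powers of $n - 2p - c$ and of $p - c$ for suitably chosen small integers $c$, so that throughout the relevant region every coefficient of $S_{n,p}(x_2)$ becomes manifestly of one sign; this forces $S_{n,p}(x_2) \ne 0$ for $x_2 > 0$. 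The finitely many small pairs $(n,p)$ not reached by the sign argument are then handled by inspecting the corresponding one-variable polynomials one by one, in the manner of Lemma \ref{p=2_3}.

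The main obstacle is exactly this last step. As was already the case for $H_{n,p}$ in CASE~A, a crude estimate controls the relevant sign only for $n$ large relative to $p$, whereas the statement must hold on the whole triangular region $2 \le p \le n - 2$; extracting the extra factor $S_{n,p}$ from a bulky two-parameter resultant and proving its sign-definiteness uniformly, rather than merely asymptotically, is where the genuine difficulty lies, and it will presumably require both the shift-and-expand bookkeeping above and a short finite verification for small parameters.
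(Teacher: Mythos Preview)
Your elimination strategy is correct and matches the paper: substitute (\ref{41}), form two polynomials in $(x_2,x_4)$, take a resultant, and peel off the linear factors corresponding to the K\"ahler--Einstein metrics. Two small corrections: the paper eliminates $x_2$ first (getting a polynomial $T(x_4)$) and only invokes the $x_4$-elimination to exploit the $p\leftrightarrow n-p$ symmetry; and the resultant has \emph{four} linear factors, not two, so you must check for each one that the back-substitution produces only the known K\"ahler--Einstein solution (the paper does this by taking a further resultant in each of the four cases).

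The genuine gap is in your plan for the residual quartic $S_{n,p}$. You propose to re-expand its coefficients in shifted variables so that they all become manifestly of one sign, forcing $S_{n,p}(x_2)\neq 0$ for $x_2>0$. This cannot work: the quartic in question has coefficients of \emph{mixed} signs for every admissible $(n,p)$. Concretely, in the paper's normalization the leading coefficient of $T(x_4)$ is $(n-1)n^2(3n-4)(2n-p-1)>0$, the constant term is $8(p-1)^2(n-2p)^2(n+p-1)>0$, but the $x_4^3$-coefficient is $4(n-1)n(2n-p-1)(n^2-4np-2p^2+8p-2)$, and the inner factor is negative throughout $2\le p<n/2$ (it equals $-6p^2+O(p)$ at $n=2p$ and stays negative). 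No choice of base point for the $(n,p)$-expansion changes the actual numerical signs of the coefficients of the quartic in $x_4$. The paper therefore does something substantially different: it locates the rightmost local minimum $u_3$ of $T$, proves $T$ is concave up on an interval containing $u_3$, and then traps the point $(u_3,T(u_3))$ inside a triangle formed by two tangent lines to the graph of $T$, whose intersection is shown to lie above the $x$-axis. The positivity of that intersection height reduces in turn to the positivity of a huge auxiliary polynomial $A(n,p)$, and \emph{that} is where the shift-and-expand bookkeeping (in $n-2p$ and $p-3$) is deployed. Your shift-and-expand idea is the right tool, but it must be applied to $A(n,p)$, not to the coefficients of $S_{n,p}$ themselves.
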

 \begin{proof}  
  We  substitute equation (\ref{41}) and $x_1=1$ into the equations $ r_1 - r_2 =0$ and $ r_3 - r_4 = 0$, and we obtain the following equations :  
 {\footnotesize{\begin{eqnarray}
& &  F(x_2, x_4) = -(p-1) {x_2}^3 {x_4} \left(2 n^2-4 n+p^2+2
   p+1\right)-{x_2}^2 {x_4}^2 (3 n^3+5 n^2
   p-9 n^2  \nonumber \\
   & & -n p^2 -6 n p +7 n+p^3-3 p^2+3
   p-1)+(p-1)^2 {x_2}^4 (n+p-1)   \nonumber \\ 
  & &  +8 (n-1) (p-1)
   {x_2}^2 {x_4} (n+p-1) -4 (p-1)^2 {x_2}^2
   (n+p-1)  \nonumber \\ 
& &  +(p-1) {x_2} {x_4}^3 (n-p-1)^2   +8 (n-1) {x_2} {x_4}^2 (n-p-1) (n+p-1)   \nonumber \\
& &  -8 (p-1) {x_2} {x_4} (n-p-1) (n+p-1) -4 {x_4}^2  (n-p-1)^2 (n+p-1)  = 0,    \label{51}
 \\
 & & G(x_2, x_4) =   {x_2} {x_4}^3 (n-p-1) \left(3 n^2-2 n p-2
   n+p^2-2 p+1\right)   \nonumber  \\
& &  +{x_2}^2 {x_4}^2 (8
   n^3-6 n^2 p-18 n^2 
   +2 n p^2+12 n p+10 n-p^3-3 p^2-3 p-1) \nonumber  \\
 & &  -(p-1)^2 {x_2}^3 {x_4} (n-p-1) 
    -8 (n-1) (p-1) {x_2}^2 {x_4} (2 n-p-1)  \nonumber  \\
& & 
    +4 (p-1)^2 {x_2}^2 (2 n-p-1)-8 (n-1) {x_2}
   {x_4}^2 (n-p-1) (2 n-p-1)  \nonumber  \\
& &
   +8 (p-1) {x_2}
   {x_4} (n-p-1) (2 n-p-1)  -{x_4}^4
  (n-p-1)^2  (2 n-p-1)  \nonumber  \\
& & +4 {x_4}^2
   (n-p-1)^2 (2 n-p-1) =0.   \label{61}
    \end{eqnarray}}}
   
  We consider the resultant of the polynomials $ F(x_2, x_4)$ and  $ G(x_2, x_4)$ with respect to $x_2$,  which is a polynomial  of $x_4$, say 
  $Q(x_4)$.
  We factor $Q(x_4)$ as 
    \begin{eqnarray*} 
   & & 
Q(x_4) = 128 (n-1)^6 (p-1)^2 {x_4}^8 (n-p-1)^4 (n
   {x_4}-2 p+2) (n {x_4}-4 n+2 p+2)\times  \\
 & &   (3 n
   {x_4}-4 n-2 p {x_4}+2 p-2 {x_4}+2) (n
   {x_4}+2 p {x_4}-2 p-2 {x_4}+2)\times  \\
   & &  (6
   n^5 {x_4}^4+8 n^5 {x_4}^3+2 n^5
   {x_4}^2-3 n^4 p {x_4}^4-36 n^4 p
   {x_4}^3-38 n^4 p {x_4}^2-8 n^4 p
   {x_4}-17 n^4 {x_4}^4 \\
   & &
   -12 n^4 {x_4}^3+22
   n^4 {x_4}^2+8 n^4 {x_4}+72 n^3 p^2
   {x_4}^2+56 n^3 p^2 {x_4}+8 n^3 p^2+7 n^3 p
   {x_4}^4+116 n^3 p {x_4}^3  \\
   & & +36 n^3 p
   {x_4}^2-64 n^3 p {x_4}-16 n^3 p+15 n^3
   {x_4}^4-12 n^3 {x_4}^3-60 n^3 {x_4}^2+8
   n^3 {x_4}+8 n^3   \\
   & & +8 n^2 p^3 {x_4}^3+44 n^2 p^3
   {x_4}^2-48 n^2 p^3 {x_4}-24 n^2 p^3-24 n^2
   p^2 {x_4}^3-260 n^2 p^2 {x_4}^2-32 n^2 p^2
   {x_4}   \\
   & & +40 n^2 p^2-4 n^2 p {x_4}^4-104 n^2 p
   {x_4}^3+108 n^2 p {x_4}^2+112 n^2 p
   {x_4}-8 n^2 p-4 n^2 {x_4}^4+24 n^2
   {x_4}^3     \\
   & &  +44 n^2 {x_4}^2-32 n^2 {x_4}-8
   n^2-32 n p^4 {x_4}^2-80 n p^4 {x_4}-8 n p^3
   {x_4}^3-8 n p^3 {x_4}^2+256 n p^3
   {x_4}   \\
   & & +32 n p^3+24 n p^2 {x_4}^3+216 n p^2
   {x_4}^2-192 n p^2 {x_4}-64 n p^2+24 n p
   {x_4}^3-136 n p {x_4}^2+32 n p    \\
   & & -8 n
   {x_4}^3-8 n {x_4}^2+16 n {x_4}+32 p^5
   {x_4}+32 p^5+32 p^4 {x_4}^2-96 p^4-32 p^3
   {x_4}^2-128 p^3 {x_4}   \\
   & & +96 p^3-32 p^2
   {x_4}^2+128 p^2 {x_4}-32 p^2+32 p
   {x_4}^2-32 p {x_4}).   
   \end{eqnarray*} 
   
  We first consider the cases when 
    \begin{eqnarray*} 
 & & (n{x_4}-2( p-1)) (n {x_4}-2(2 n- p-1) \times \\
 & & \left((3 n - 2( p+1) ) {x_4}-2(2 n- p -1)\right) \left((n+2( p-1) ) {x_4}-2( p-1)\right) = 0, 
   \end{eqnarray*} 
  
  and we claim that we only get K\"ahler-Einstein metrics on $SO(2n )/ U(p) \times U( n -p)$. 
  
1)  Let $\displaystyle x_4 = \frac{2 (p-1)}{n}$. Then equations (\ref{51})  and  (\ref{61}) reduce to  
\begin{eqnarray*} 
  & &  \frac{(p-1)^2 (n \, {x_2} - 2( n+ p -1))}{n^3} 
  \left(n^2 (n+p-1) {x_2}^3 -2 (n-2) n (n-2 p) {x_2}^2   \right. \\
  & &  \left.   -4  (n-p-1) \left(n^2+2 n p - 4 n-p^2+1\right) {x_2}+8 n
   (n-p-1)^2  \right) =0,  \\
   & & \frac{2 (p-1)^2 (n-p-1)}{n^4} (n \,  {x_2} - 2 ( n + p -1) ) (n \,  {x_2} - 2 (n- p+1) )\times \\
   & &  \left(2 (n-p-1) (2 n-p-1)-n (p-1) {x_2}\right) = 0. 
      \end{eqnarray*} 
      If $ n \, {x_2}-2( n- p+1) \neq 0$, we have 
      \begin{eqnarray*} 
  & & 
\  \  \left(n^2 (n+p-1) {x_2}^3 -2 (n-2) n (n-2 p) {x_2}^2   \right. \\
  & &   \left.   -4  (n-p-1) \left(n^2+2 n p - 4 n-p^2+1\right) {x_2}+8 n
   (n-p-1)^2  \right) =0,  \\
   & & \  \ (n \,  {x_2} - 2 (n- p+1) )\left(2 (n-p-1) (2 n-p-1)-n (p-1) {x_2}\right) = 0. 
      \end{eqnarray*} 
 By taking   the resultant of these polynomials with respect to $x_2$, 
 we get 
 $$
 -2048 (n-1)^2 n^6 \left((n-p)^2+n-1\right) (n-p-1)^3
   (n-p),$$
  and we see that  the resultant is non-zero for $ 2 \leq p \leq n-2$. Thus we get only  $\displaystyle  {x_2} = \frac{2 ( n+ p-1)}{n} $ for a solution of equations (\ref{51})  and  (\ref{61}). From (\ref{41}), we see  $\displaystyle  {x_3} = \frac{  n+2 p- 2}{n} $. Thus we obtain a 
  K\"ahler-Einstein metric in this case.  
  
  Notice that this metric corresponds (up to scale) to the K\"ahler-Einstein metric $g_1$ of (\ref{KE}) 
   
2)  Let $\displaystyle x_4 = \frac{2 (2 n - p-1)}{n}$. Then equations  (\ref{51})  and  (\ref{61}) reduce to  
\begin{eqnarray*} 
  & & \ \  -\frac{(n\, {x_2}-2( n - p -1)) }{n^3}\left(-n^2
   (p-1)^2 (n+p-1) {x_2}^3  \right. \\
   & &  +2 n (p-1) 
   \left(4 n^3-3 n^2 p-9 n^2+2 n p^2+10 n p+4 n-4 p^2-4
   p\right){x_2}^2  \\
   & &  + 4 (2 n-p-1) \left(6 n^4+5 n^3
   p-19 n^3-13 n^2 p^2+21 n^2+5 n p^3+9 n p^2 \right. \\
   & & \left. \left. - 5 n p-9 n - p^4-2 p^3+2 p+1\right) {x_2} -8 n (n-p-1) (2 n-p-1)^2
   (n+p-1)\right)=0,  \\
   & & \ \  \frac{2 (2 n-p-1) (n\, {x_2}-2( n - p -1))}{n^4} \left(-n^2 (p-1)^2 
   (n-p-1){x_2}^2 \right. \\
   & &  +4 n \left(4 n^3-5 n^2 p-7 n^2+n
   p^2+8 n p+3 n-2 p^2-2 p\right) (2 n-p-1) {x_2} \\
   & & \left.+4 (n-p-1)^2 (3 n-p-1) (2 n-p-1)^2\right) = 0. 
      \end{eqnarray*} 
      If $ n \, {x_2}-2( n- p - 1) \neq 0$, we have 
  \begin{eqnarray*} 
  & & \ \ \left(-n^2
   (p-1)^2 (n+p-1) {x_2}^3    +2 n (p-1) 
   \left(4 n^3-3 n^2 p-9 n^2+2 n p^2+10 n p+4 n  \right. \right. \\
    & &  \left.-4 p^2-4
   p\right){x_2}^2
    + 4 (2 n-p-1) \left(6 n^4+5 n^3
   p-19 n^3-13 n^2 p^2+21 n^2+5 n p^3+9 n p^2 \right. \\
   & & \left. \left. -5 n p-9 n -p^4-2 p^3+2 p+1\right) {x_2} -8 n (n-p-1) (2 n-p-1)^2
   (n+p-1)\right)=0,  \\
   & & \ \   \left(-n^2 (p-1)^2 
   (n-p-1){x_2}^2 +4 n \left(4 n^3-5 n^2 p-7 n^2+n
   p^2+8 n p+3 n-2 p^2-2 p\right)\times \right. \\
   & & \left.   (2 n-p-1) {x_2}+4 (n-p-1)^2 (3 n-p-1) (2 n-p-1)^2\right) = 0. 
      \end{eqnarray*} 
 By taking   the resultant of these polynomials with respect to $x_2$, 
 we get 
  \begin{eqnarray*} 
  & & 
-2048 (n-1)^2 n^6 (p-1)^2 (n-p-1) (n-p) (2 n-p-1)^6  \left(p (n-p)+(n-1)^2\right)\times \\
& & 
   \left(26 n^5-48 n^4
   p-92 n^4+14 n^3 p^2+160 n^3 p+124 n^3+12 n^2 p^3-64
   n^2 p^2-180 n^2 p \right. \\ 
   & & \left. -80 n^2-4 n p^4-7 n p^3+63 n p^2+83
   n p+25 n+3 p^4-2 p^3-16 p^2-14 p-3\right). 
      \end{eqnarray*} 
Now we have    
 \begin{eqnarray*} & & 
 \ \    26 n^5-48 n^4
   p-92 n^4+14 n^3 p^2+160 n^3 p+124 n^3+12 n^2 p^3-64
   n^2 p^2-180 n^2 p  \\ 
   & &    -80 n^2-4 n p^4-7 n p^3+63 n p^2+83
   n p+25 n+3 p^4-2 p^3-16 p^2-14 p-3  \\
   & & 
 = 26 (n-p-1)^5+2 (41 p+19)(n-p-1)^4+2 \left(41 p^2+60 p+8\right) (n-p-1)^3 \\ 
& &  +2 p \left(13 p^2+55 p+30\right) (n-p-1)^2+\left(29 p^3+49 p^2+11
   p-1\right) (n-p-1)+8 p^2 (p+1)
      \end{eqnarray*} 
which is positive for $ 2 \leq p \leq n-2$. Thus  we see that  the resultant is non-zero and we only get   $\displaystyle  {x_2} = \frac{2 ( n- p-1)}{n} $ for a solution of equations (\ref{51})  and  (\ref{61}). From (\ref{41}), we see  $\displaystyle  {x_3} = \frac{  3 n - 2 p- 2}{n} $. Thus we obtain a K\"ahler-Einstein metric in this case. 

Notice that this metric corresponds (up to scale) to the K\"ahler-Einstein metric $g_2$ of (\ref{KE})
  
3)  Let $\displaystyle x_4 = \frac{2(2 n- p -1)}{3 n - 2( p+1) }$. By a similar method we obtain that 
for $ 2 \leq p \leq n-2$,
  $\displaystyle  {x_2} = \frac{2 (n-p-1)}{  3 n - 2 p- 2} $ is the only solution of equations 
  (\ref{51})  and  (\ref{61}), and  from (\ref{41}) we see that 
  $\displaystyle  {x_3} = \frac{n}{  3 n - 2 p- 2}$. Thus we obtain a K\"ahler-Einstein metric in this case.

4)  Let $\displaystyle x_4 = \frac{2 (p-1)}{n+2( p-1)}$. By a similar method we obtain that
for $ 2 \leq p \leq n-2$,
$\displaystyle  {x_2} = \frac{2 (n + p-1)}{n+2 p-2} $ is  the only positive solution of the equations (\ref{51})  and  (\ref{61}) 
for $\displaystyle  \frac{n}{2} \leq p \leq n - 2 $, and from
(\ref{41}) we see  that $\displaystyle  {x_3} = \frac{n}{  n+2 p-2}$. 

Therefore, we obtain a K\"ahler-Einstein metric in all four cases. 

  
  \medskip

  We now denote by $T(x_4)$ the  factor of degree $4$ in the factorization of $Q(x_4)$. Then 
  we can write 
     \begin{eqnarray*} 
   & & T(x_4) =  (n-1) n^2 (3 n-4)(2 n-p-1) {x_4}^4 \\
   & &  + 4 (n-1) n(2 n-p-1) \left(n^2-4 n p-2 p^2+8
   p-2\right){x_4}^3  \\ 
    & &  +2
  (n^5-19 n^4 p+11 n^4+36 n^3 p^2+18 n^3 p-30
   n^3+22 n^2 p^3-130 n^2 p^2+54 n^2 p \\
   & &   +22 n^2-16 n
   p^4-4 n p^3+108 n p^2-68 n p-4 n+16 p^4-16 p^3-16
   p^2+16 p ) {x_4}^2 \\
    & &  -8 (p-1) (n-2 p) (n+p-1)
   \left(n^2-6 n p+2 n+2 p^2+4 p-2\right) {x_4}  \\
   & & 
   +8 (p-1)^2 (n-2 p)^2 (n+p-1). 
    \end{eqnarray*}
  
   The case $n = 2 p$ has been studied in \cite{Chry3}.
   \medskip
   
   We now proceed in two steps.
   \bigskip
   
   \underline{STEP 1.}
   \medskip
  { \bf   { \boldmath We will show that  for $n \geq 4$ and $ 2 \leq p < n/2$ the equation $T(x_4) = 0 $ has no positive solutions.}} 
  
  \medskip
  Note that $T(0) = 8 (p-1)^2 (n-2 p)^2 (n+p-1) > 0$ for $ 2 \leq p < n/2$. 
  
  We have   that
 \begin{eqnarray*}
 & & \frac{d T}{d x_4 }(x_4) =  4 (n-1) n^2 (3 n-4) (2 n-p-1){x_4}^3 \\
 & &  +
12 (n-1) n (2 n-p-1) \left(n^2-4 n p-2
   p^2+8 p-2\right) {x_4}^2 \\ 
   & &+4  \left(n^5-19 n^4
   p+11 n^4+36 n^3 p^2+18 n^3 p-30 n^3+22 n^2 p^3-130
   n^2 p^2+54 n^2 p \right.\\ 
   & & \left.  + 22 n^2-16 n p^4-4 n p^3+108 n
   p^2-68 n p-4 n+16 p^4-16 p^3-16 p^2+16 p \right){x_4} \\
   & &   - 8 (p-1) (n-2 p) (n+p-1) \left(n^2-6
   n p+2 n+2 p^2+4 p-2\right). 
     \end{eqnarray*} 
   Note that the coefficient of ${x_4}^3$ is $ 4 (n-1) n^2 (3 n-4) (2 n-p-1) > 0$.

The polynomial $T(x_4)$ of degree $4$ attains a local minimum at $x_4=u_1$, a local maximum at $x_4=u_2$, and a local minimum
at $x_4=u_3$.

  By evaluating $\displaystyle \frac{d T}{d x_4 }(x_4)$ at the point $\displaystyle \alpha= - \frac{n - 2p}{2 n}<0$, we have that
  
    \begin{eqnarray*}
 & & \frac{d T}{d x_4 }\left( - \frac{n - 2p}{2 n} \right) = 
 \frac{(n-2 p)}{2 n} \left(2 n^5+9 n^4 p-29 n^4+8
   n^3 p^2-33 n^3 p+103 n^3-24 n^2 p^2 \right.\\
  & & \left. -16 n^2 p-112 n^2+8 n p^4+48 n p^3-32 n
   p^2+92 n p+36 n-40 p^4+8 p^3+8 p^2-40 p \right).  
   \end{eqnarray*} 
Since we can write 
  \begin{eqnarray*}  & &  2 n^5+9 n^4 p-29 n^4+8
   n^3 p^2-33 n^3 p+103 n^3-24 n^2 p^2  -16 n^2 p-112 n^2+8 n p^4 \\
  & & +48 n p^3-32 n
   p^2+92 n p+36 n-40 p^4+8 p^3+8 p^2-40 p  \\
   &   & =  2 (n-2 p)^5+(29 p-29) (n-2 p)^4+\left(160 p^2-265 p+103\right) (n-2 p)^3 \\    
    &   &+\left(424 p^3-918 p^2+602  p-112\right) (n-2 p)^2  +\left(552  p^4-1372 p^3+1140 p^2-356 p+36\right)\times \\
 & &   (n-2 p) +288 p^5-768 p^4+704 p^3-256 p^2+32 p
       \end{eqnarray*} 
   we see  that $\displaystyle  \frac{d T}{d x_4 }\left(\alpha  \right) > 0$, thus $u_1<\alpha$

   Also, by evaluating $\displaystyle \frac{d T}{d x_4 }(x_4)$ at the point $\displaystyle x_4=  \beta=\frac{2(p-1)}{n}>0$, we have that
   
    \begin{eqnarray*}
 & & \frac{d T}{d x_4 }\left( \frac{2(p-1)}{n} \right) = 
 -\frac{16 (p-1) (n-p-1)}{n} \left(n^3+3 n^2 p-7 n^2-8 n p^2+4 n p + 8 n \right. \\ 
 & & 
 \left. +2 p^3+6 p^2-6 p-2\right). 
   \end{eqnarray*} 
Since we can write 
  \begin{eqnarray*}  & & n^3+3 n^2 p-7 n^2-8 n p^2+4 n p + 8 n +2 p^3+6 p^2-6 p-2 
  \\  
  &  = & 
 (n-2 p)^3+(9 p-7) (n-2 p)^2+8 (p-1) (2
   p-1) (n-2 p)+2 (p-1)^2 (3 p-1)) > 0, 
    \end{eqnarray*} 
   we see  that $\displaystyle  \frac{d T}{d x_4 }\left(\beta \right) < 0$, thus $\beta <u_3$

   Therefore, the three real solutions $u_1$, $u_2$, $u_3$ of the polynomial $\displaystyle  \frac{d T}{d x_4 }(x_4)$ of degree 3 
  satisfy 
   
   $$\displaystyle u_1 <  \alpha < u_2 < \beta < u_3. $$

    \begin{figure}[htbp]
\begin{minipage}{.48\linewidth}
$T(x_4 )$ \hspace{4.3cm} $\displaystyle \frac{d T}{d x_4 }(x_4)$ 
\vspace{-15pt}
\begin{center}
\includegraphics[width=0.9\linewidth]{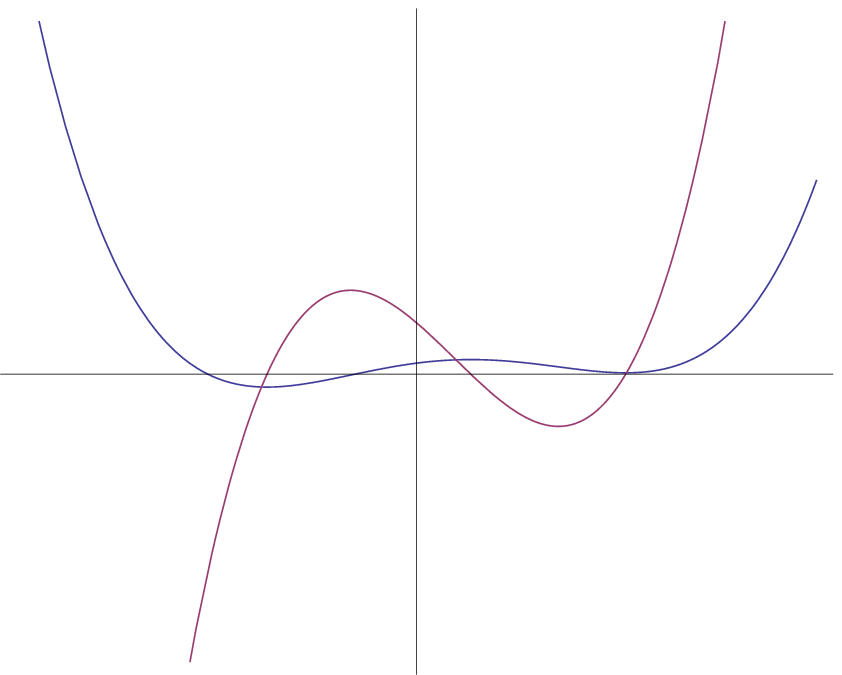}
\end{center}
\end{minipage}
\quad 
\begin{minipage}{0.48\linewidth}
\hspace{4.6cm}$\displaystyle \frac{d T}{d x_4 }(x_4)$ \hspace{0.4cm} $T(x_4 )$ 
\vspace{-30pt}
\begin{center}
\includegraphics[width=0.9\linewidth]{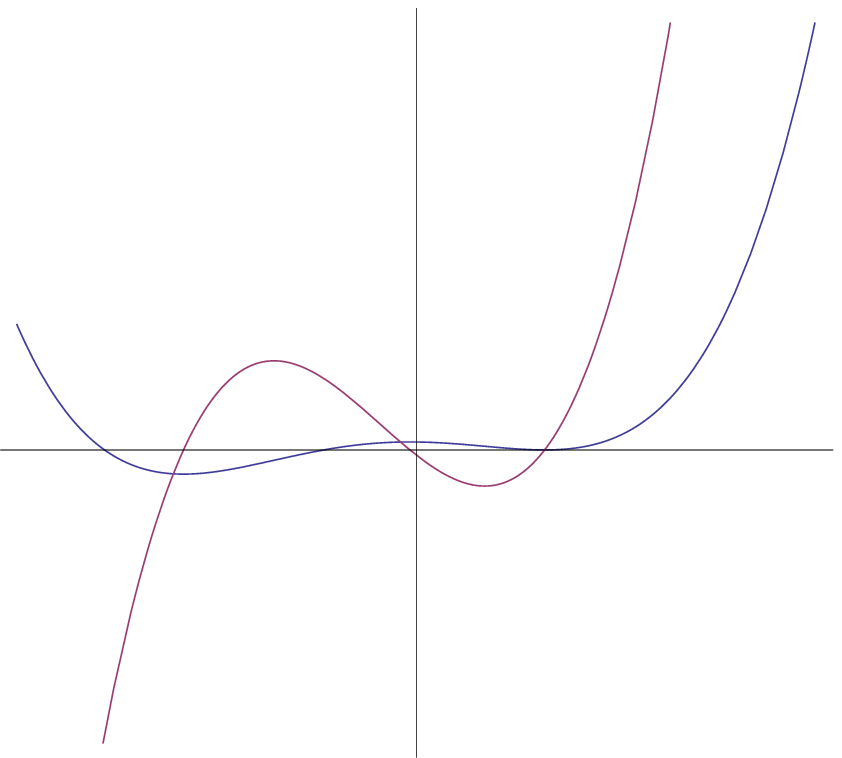}
\end{center}
\end{minipage}
\end{figure}

Since $T(0) > 0$, in order to show that $ T(x_4) > 0 $ for $x_4 >0$,  we need to prove
the following:

{\bf \underline{Claim.}  The local minimum \boldmath{$T(u_3)$} is positive.  }

     
  We show our claim  by dividing  into two cases, namely $ p = 2$ and $p \geq 3$.

  \bigskip
  
{\bf Case 1. \boldmath{$ p = 2$}  }

 The polynomial $T(x_4)$ is given by 
  \begin{eqnarray*} 
  & & T(x_4) =  (n-1) n^2 (2 n-3) (3 n-4) {x_4}^4+4 (n-1) n (2 n-3)
   \left(n^2-8 n+6\right) {x_4}^3 \\
   & & +2 \left(n^5-27 n^4+150 n^3-214 n^2+4
   n+96\right) {x_4}^2 - 8 (n-4) (n+1) \left(n^2-10
   n+14\right) {x_4} \\
   &  & +8 (n-4)^2 (n+1). 
       \end{eqnarray*}
 
Then the local minimum of $T(x_4)$ at $x_4 = u_3$ satisfies 
$2/n < u_3 < 2/n +(2/n)^2$.  

Indeed, it is 
  \begin{eqnarray*}
 & & \frac{d T}{d x_4 }(x_4) = 4 (n-1) n^2 (2 n-3) (3 n-4) {x_4}^3+12 (n-1) n (2 n-3)
   \left(n^2-8 n+6\right) {x_4}^2 \\ 
   & & +4 \left(n^5-27 n^4+150 n^3-214 n^2+4 n+96\right)
  {x_4} - 8 (n-4) (n+1) \left(n^2-10 n+14\right). 
  \end{eqnarray*} 
 Then 
 \begin{eqnarray*}
  & & \frac{d T}{d x_4 }(2/n)= -\frac{16 (n-3) \left(n^3-n^2-16 n+26\right)}{n} \\
   & &  = -\frac{16 (n-3) \left( (n-4)^3+11 (n-4)^2+24 (n-4)+10\right)}{n} < 0 
   \end{eqnarray*} 
 and 
 \begin{eqnarray*}
  & & \frac{d T}{d x_4 }(2/n + (2/n)^2)= \frac{16 \left(n^6+7 n^5-12 n^4-14 n^3-152 n^2+392
   n-192\right)}{n^4} \\
   & &  = \frac{16}{n^4}\left((n-4)^6+31 (n-4)^5+368 (n-4)^4+2194 (n-4)^3 +6848 (n-4)^2 \right. 
   \\
& &   \left.+10536 (n-4)+6240\right) > 0.  
   \end{eqnarray*} 
 
 Also, we have that  
   \begin{eqnarray*}
 & & \frac{d^2 T}{d{ x_4 }^2}(x_4) = 12 (n-1) n^2 (2 n-3) (3 n-4) {x_4}^2+24 (n-1) n (2 n-3)
   \left(n^2-8 n+6\right) {x_4} \\ 
   & & +4 \left(n^5-27 n^4+150 n^3-214 n^2+4 n+96\right) \\
   & & =  12 (n-1) n^2 (2 n-3) (3 n-4)\left( x_4 +  \frac{n^2-8 n+6}{n (3 n-4)} \right)^2+4 (n^5-27 n^4+150 n^3 \\ 
  & &    -214 n^2+4 n+96) - \frac{12 (n-1)  (2 n-3) (n^2-8 n+6)^2}{ (3 n-4)}. 
     \end{eqnarray*} 
 Note that 
 $$\frac{2}{n} - (-  \frac{n^2-8 n+6}{n (3 n-4)}) = \frac{n^2-2 n-2}{n (3 n-4)} = \frac{(n-3)^2+4 (n-3)+1}{n (3 n-4)} > 0 
 $$
 and
   \begin{eqnarray*}
  & &   \frac{d^2 T}{d{ x_4 }^2}(2/n)  = 4 (n-3) (n-2) \left(n^3+2 n^2-26 n+28\right)
  \\
  & & =
4 (n-3) (n-2) \left( (n-4)^3+14 (n-4)^2+38 (n-4)+20\right) > 0. 
    \end{eqnarray*} 
 Hence, the function $T(x_4)$ is concave up for   $x_4 \geq 2/n$, so the local minimum $x_4=u_3$ satisfies
 $2/n <u_3<2/n +(2/n)^2$.
 
 \medskip

We consider the tangent lines of the curve $T(x_4)$ at $x_4 = 2/n$ and $x_4 =2/n + (2/n)^2$, given by the equations 
 \begin{eqnarray*}
 & &  z_1(t) = 
\frac{16 (n-3)^2 (3 n+8)}{n^2}-\frac{16 (n-3) \left(n^3-n^2-16
   n+26\right) }{n} ( t - 2/n ) \\
   & & 
 = -\frac{16 (n-3) \left(\left(n^3-n^2-16 n+26\right) n \  t  -2 n^3-n^2+33 n-28\right)}{n^2}
 \end{eqnarray*}  
and
   \begin{eqnarray*}
   & & z_2(t) = \frac{16 \left(n^6+7 n^5-12 n^4-14 n^3-152 n^2+392 n-192\right)}{n^4}\left(t -\frac{4}{n^2}-\frac{2}{n}\right) \\
   & & +\frac{16 \left(n^7+3 n^5-28 n^4-40 n^3+32 n^2+368 n-192\right)}{n^6}
   \\
   & & 
= \frac{16}{n^6}
 \left(\left(n^6+7 n^5-12 n^4-14 n^3-152 n^2+392 n-192\right) n^2 \ t  -n^7-18 n^6 \right. \ \  \quad  \quad \quad \\ 
   & & \left.-n^5+48 n^4+320 n^3-144 n^2- 816 n+576\right)
 \end{eqnarray*}  
 respectively.  These are shown in the figure. 
   \bigskip
  
\begin{center}
\includegraphics[width=0.4\linewidth]{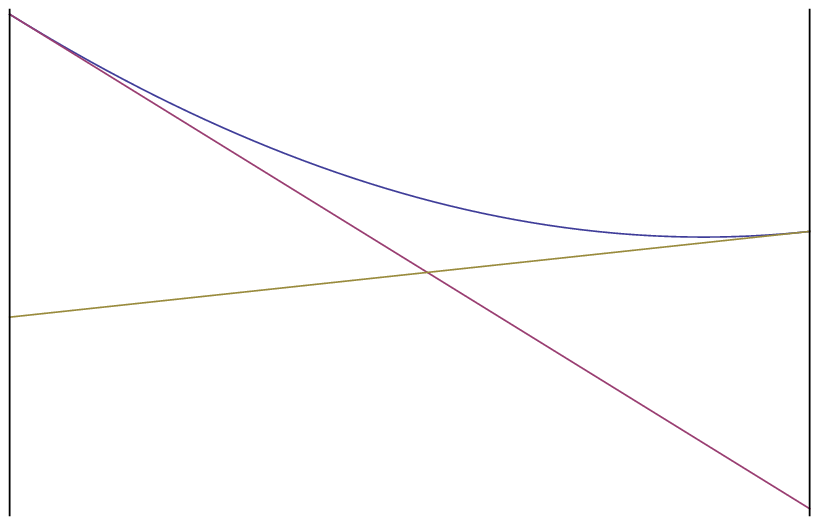} 
\end{center}
\begin{center} \quad \quad \quad $x_4 =2/n$  \hspace{120pt} $x_4 =2/n + (2/n)^2$ 
\end{center}
 
 Let $ (x_0, y_0)$ be their point of intersection given by
 
 \begin{eqnarray*}
   & &  x_0 =  \frac{2 \left(n^8-2 n^7-9 n^6+64 n^5-66 n^4-160 n^3+72 n^2+408
   n-288\right)}{n^2 \left(n^7-3 n^6-6 n^5+62 n^4-92 n^3-152
   n^2+392 n-192\right)}, \\
    & &  y_0 =  
 \frac{16 (n-3)}{n^3 \left(n^7-3
   n^6-6 n^5+62 n^4-92 n^3-152 n^2+392 n-192\right)}\times
   \\
   & &  \left(n^{10}-2 n^9-17 n^8+68 n^7+94 n^6-500 n^5+88
   n^4-5368 n^3+26048 n^2-35808 n+14976\right). 
 \end{eqnarray*} 
 Note that 
  \begin{eqnarray*}
   & & 
 n^{10}-2 n^9-17 n^8+68 n^7+94 n^6-500 n^5+88
   n^4-5368 n^3+26048 n^2-35808 n+14976 \\
   & &  = (n-3)^{10}+28 (n-3)^9+334 (n-3)^8+2252 (n-3)^7+9712 (n-3)^6+29164
   (n-3)^5 \\
   & & +65002 (n-3)^4+102860 (n-3)^3+99479 (n-3)^2+47904 (n-3)+8064 > 0 
  \end{eqnarray*}  for $n \geq 3$.  
  Therefore, the local minimal $T(u_3) $  is greater than $ y_0$, and the claim has been proved.
\medskip

{\bf   Case 2. \boldmath{$ 3\le p < n/2$}.  }

Note that $ n - p \geq p$ and 
\begin{eqnarray*}
T(2(p-1)/n) = \frac{16 (p-1)^2 (n-p-1)^2 \left(n p+n+4( p-1) p\right)}{n^2} > 0. 
 \end{eqnarray*} 
Now we have   
 \begin{eqnarray*}
 & & \frac{d T}{d x_4 }(x_4) =  4 (n-1) n^2 (3 n-4) (2 n-p-1){x_4}^3 \\
 & &  +
12 (n-1) n (2 n-p-1) \left(n^2-4 n p-2
   p^2+8 p-2\right) {x_4}^2 \\ 
   & &+4  \left(n^5-19 n^4
   p+11 n^4+36 n^3 p^2+18 n^3 p-30 n^3+22 n^2 p^3-130
   n^2 p^2+54 n^2 p \right.\\ 
   & & \left.  + 22 n^2-16 n p^4-4 n p^3+108 n
   p^2-68 n p-4 n+16 p^4-16 p^3-16 p^2+16 p \right){x_4} \\
   & &   - 8 (p-1) (n-2 p) (n+p-1) \left(n^2-6
   n p+2 n+2 p^2+4 p-2\right)  
  \end{eqnarray*} 
  and 
   \begin{eqnarray*}
 & & \frac{d T}{d x_4 }( 2(p-1)/n ) = 
 -\frac{16 (p-1) (n-p-1)}{n} \left(n^3+3 n^2 p-7 n^2-8 n p^2+4 n p + 8 n \right. \\ 
 & & 
 \left. +2 p^3+6 p^2-6 p-2\right). 
   \end{eqnarray*} 
Note that 
  \begin{eqnarray*}  & & n^3+3 n^2 p-7 n^2-8 n p^2+4 n p + 8 n +2 p^3+6 p^2-6 p-2 
  \\  
  &  = & 
 (n-2 p)^3+(9 p-7) (n-2 p)^2+8 (p-1) (2
   p-1) (n-2 p)+2 (p-1)^2 (3 p-1)) > 0,
    \end{eqnarray*} 
  thus we see  that $\displaystyle  \frac{d T}{d x_4 }( \beta ) < 0$. 
  
  Let $z_1(t)$ be the  tangent line  of the curve $T(x_4)$ at $x_4 =  \beta$.  This is given by     
   \begin{eqnarray*}
 & &  z_1(t) = 
-\frac{16 (p-1) (n-p-1)}{n} \left(n^3+3 n^2 p-7 n^2-8 n p^2+4 n p+8 n+2 p^3+6 p^2 \right. \\
& & \left. -6 p-2\right) \left(t-\frac{2 (p-1)}{n}\right)+ \frac{16 (p-1)^2 (n-p-1)^2 \left(n p+n+4( p-1) p\right)}{n^2}. 
 \end{eqnarray*}  
  We consider the point $ t_0$  such that  $ z_1(t_0) = 0$. Then we see that 
  \begin{eqnarray*}
 & &  t_0 = \frac{(p-1) \left(2 n^3+7 n^2 p-13 n^2-13 n p^2+2 n p+15 n+12 p^2-8
   p-4\right)}{n \left(n^3+3 n^2 p-7 n^2-8 n p^2+4 n p+8 n+2 p^3+6 p^2-6 p-2\right)}. 
  \end{eqnarray*} 
We will show that $\displaystyle  \frac{d T}{d x_4 }(t_0) > 0$ for  $ 3 \leq p \leq n/2$. 
Indeed, we have   
 \begin{eqnarray*}
 & & \frac{d T}{d x_4 }(t_0) = \frac{4 (p-1) (n-p-1)A(n, p)}{n \left(n^3+3 n^2 p-7 n^2-8 n p^2+4 n p+8 n+2 p^3+6 p^2-6 p-2\right)^3}, 
   \end{eqnarray*}
   where 
   \begin{eqnarray*}
 & &  A(n, p) = 
  n^{12} p-3 n^{12}+15 n^{11} p^2-68 n^{11}
   p+85 n^{11}+73 n^{10} p^3-447 n^{10} p^2 \\
   & & 
   +1135 n^{10} p-985 n^{10}+68
   n^9 p^4-730 n^9 p^3+3590 n^9 p^2-8134 n^9 p+6102 n^9 \\ 
   & & -388 n^8 p^5+1743 n^8 p^4-3118 n^8 p^3-6724 n^8 p^2+28594 n^8 p-22347 n^8-590 n^7 p^6 \\ 
   & & 
   +3284 n^7 p^5-13140 n^7 p^4+38772 n^7 p^3-26930 n^7 p^2-48968 n^7
   p+51156 n^7\\
   & &+1180 n^6 p^7  -3852 n^6 p^6+17728 n^6 p^5-22616 n^6
   p^4-72692 n^6 p^3+123252 n^6 p^2 \\ 
   & & +29464 n^6 p-76048 n^6+961 n^5
   p^8+148 n^5 p^7-20352 n^5 p^6-18940 n^5 p^5 \\
   & & +140330 n^5 p^4-14084 n^5
   p^3-190872 n^5 p^2+29804 n^5 p+75053 n^5-2356 n^4 p^9\\ 
   & & -6225 n^4
   p^8+24308 n^4 p^7+60596 n^4 p^6-94876 n^4 p^5-165630 n^4 p^4+164044
   n^4 p^3 \\ 
   & & +136388 n^4 p^2-67312 n^4 p-49449 n^4+1068 n^3 p^{10}+11644
   n^3 p^9-9136 n^3 p^8 \\
   & &  -59672 n^3 p^7-27216 n^3 p^6+194496 n^3 p^5+28056
   n^3 p^4-177768 n^3 p^3-35740 n^3 p^2 \\
           & & +52804 n^3 p+21464 n^3+32 n^2
   p^{11}-6532 n^2 p^{10}-5992 n^2 p^9+24252 n^2 p^8+53120 n^2 p^7\\ 
   & & -41016 n^2 p^6-130288 n^2 p^5+63304 n^2 p^4+77920 n^2 p^3-7492 n^2 p^2-21416 n^2 p \\ 
   & &  -5892 n^2-64 n p^{12}+1024 n p^{11}+5664 n p^{10}-5120 n
   p^9-11616 n p^8-23168 n p^7\\ 
   & & +44864 n p^6+27264 n p^5-37376 n p^4-12672
   n p^3+5792 n p^2+4480 n p+928 n\\ 
   & & -768 p^{11}-832 p^{10}+2688 p^9+960
   p^8+4608 p^7-12672 p^6+1792 p^5+5248 p^4\\ 
   & & +256 p^3-832 p^2-384 p-64. 
     \end{eqnarray*}
     
We shall show that $ A(n, p) > 0$  for $ 3 \leq p \leq n/2$.  
We can write  $ A(n, p) $  as a polynomial of $y = n -2 p$ of the form  
  $$ A(n,  p) = (p-3) y^{12}+ a_{11} y^{11} + a_{10} y^{10}+ a_{9} y^9+a_{8} y^{8} + a_{7} y^{7}+ a_{6} y^6+a_{5} y^{5} + a_{4} y^{4}+ a_{3} y^3+a_{2} y^{2} + a_{1} y + a_{0}, $$ 
  where   $a_j\ (j=0,\dots ,11)$ can be written as follows: 
  \begin{eqnarray*}
  & & a_{11} = 39 (p-3)^2+94 (p-3)+16 \\
    & & a_{10} = 667 (p-3)^3+3268 (p-3)^2+4604 (p-3)+1424 \\
      & & a_{9} = 6588 (p-3)^4+49146 (p-3)^3+131552 (p-3)^2+146040 (p-3)+53568 \\
     & & a_{8} =41696 (p-3)^5+420063 (p-3)^4+1666118 (p-3)^3+3239144 (p-3)^2 \\ & & +3068032 (p-3)+1121664 \\
    & & a_{7} = 177618 (p-3)^6+2258984 (p-3)^5+11898662 (p-3)^4+33203396
   (p-3)^3\\ & & +51731888 (p-3)^2+42627136 (p-3)+14495232 \\
 & & a_{6} =   521336 (p-3)^7+8010196 (p-3)^6+52618296 (p-3)^5+191551956
   (p-3)^4 \\ 
   & &+417348472 (p-3)^3+544194848 (p-3)^2+393195520
   (p-3)+121432064 \\
     & & a_{5} = 1062393 (p-3)^8+19117036 (p-3)^7+150329840 (p-3)^6+674768512
   (p-3)^5 \\
    & & +1890947640 (p-3)^4+3387906256 (p-3)^3+3789854976
   (p-3)^2 \\  
   & & +2420175872 (p-3)+675510272 \\
    & & a_{4} = 1493910 (p-3)^9+30767865 (p-3)^8+281434708 (p-3)^7+1500619596
   (p-3)^6 \\ 
   & &+5140194384 (p-3)^5+11730160160 (p-3)^4+17833993024
   (p-3)^3 \\ 
   & & +17419011328 (p-3)^2+9918241792 (p-3)+2508337152 \\
     & & a_{3} = 1416852 (p-3)^{10}+32818860 (p-3)^9+341869872 (p-3)^8+2109020632
   (p-3)^7 \\
   & & +8532907744 (p-3)^6+23658308832 (p-3)^5+45523459968
   (p-3)^4 \\ 
   & & +60028498688 (p-3)^3+51913028096 (p-3)^2+26587561984
   (p-3)+6123782144 \\
    & & a_{2} = 862488 (p-3)^{11}+22162788 (p-3)^{10}+258695208 (p-3)^9+1810579704
   (p-3)^8 \\ 
   & &  +8442449008 (p-3)^7+27537781712 (p-3)^6+64116833984
   (p-3)^5 \\ 
   & & +106560650432 (p-3)^4+123887801600 (p-3)^3+95957073920
   (p-3)^2 \\
   & & +44563972096 (p-3)+9401008128\\
 & & a_{1} = 303264 (p-3)^{12}+8551008 (p-3)^{11}+110430432 (p-3)^{10}+863710128
   (p-3)^9 \\ 
   & & +4556601456 (p-3)^8+17082048928 (p-3)^7+46660844352
   (p-3)^6 \\
    & & +93574409856 (p-3)^5+136732708864 (p-3)^4+141973649408
   (p-3)^3 \\ & & +99432382464 (p-3)^2+42173857792 (p-3)+8192524288 \\
    & & a_{0} = 46656 (p-3)^{13}+1430784 (p-3)^{12}+20235744 (p-3)^{11}+174764304
   (p-3)^{10} \\
   & & +1028302272 (p-3)^9+4352962512 (p-3)^8+13638809216
   (p-3)^7\\
   & & +32024909952 (p-3)^6+56352955904 (p-3)^5+73394750720
   (p-3)^4 \\
   & & +68769538048 (p-3)^3+43897815040 (p-3)^2+17110138880
   (p-3)+3075473408.
      \end{eqnarray*}
  We see that  the  coefficients $a_{j}$  ($j =0, \dots ,11$)  are positive for $p \geq 3$, which means that $ A(n, p) > 0$  for $ 3 \leq p <  n/2$.  
  Therefore, $\displaystyle  \frac{d T}{d x_4 }(t_0) > 0$ for  $ 3 \leq p \leq n/2$. 
  \medskip

 Now we compute $\displaystyle  \frac{d^2T}{d {x_4 }^2}(x_4)$. We see that 
  \begin{eqnarray*}
  & & 
  \frac{d^2T}{d {x_4 }^2}(x_4) = 12 (n-1) n^2 (3 n-4)  (2 n-p-1){x_4}^2 \\
  & &  +
 24 (n-1) n (2 n-p-1) \left(n^2-4
   n p-2 p^2+8 p-2\right)  {x_4}  \\
   & & +4
   \left(n^5-19 n^4 p+11 n^4+36 n^3 p^2+18
   n^3 p-30 n^3+22 n^2 p^3-130 n^2 p^2+54
   n^2 p+22 n^2  \right.\\
   & &  \left.-16 n p^4-4 n p^3+108 n
   p^2-68 n p-4 n+16 p^4-16 p^3-16 p^2+16
   p\right)\\
   & & = 12 (n-1) n^2 (3 n-4)  (2 n-p-1)\left(x_4 + \frac{n^2-4 n p-2 p^2+8 p-2}{n (3 n-4)} \right)^2\\
   & & +4
   \left(n^5-19 n^4 p+11 n^4+36 n^3 p^2+18
   n^3 p-30 n^3+22 n^2 p^3-130 n^2 p^2+54
   n^2 p+22 n^2  \right.\\
   & &  \left.-16 n p^4-4 n p^3+108 n
   p^2-68 n p-4 n+16 p^4-16 p^3-16 p^2+16
   p\right)\\
   & & - 12 (n-1)  (2 n-p-1) \frac{(n^2-4 n p-2 p^2+8 p-2)^2}{ (3 n-4)}. 
    \end{eqnarray*}
    
  Note that 
   \begin{eqnarray*}
   & & 
 \beta - (- \frac{n^2-4 n p-2 p^2+8 p-2}
 {n (3 n-4)}) = \frac{n^2+2 n p-6 n-2 p^2+6}{n (3 n-4)} \\
   & &  =  \frac{(n-2 p)^2+6 (p-1) (n-2 p)+6 (p-1)^2}{n (3 n-4)} > 0,  
    \end{eqnarray*}
    
    and 
    \begin{eqnarray*}
 & &     \frac{d^2T}{d {x_4 }^2}\left( \beta \right)  = 4 (n-p-1) \left(n^4+6 n^3 p-12 n^3+6 n^2
   p^2-60 n^2 p+66 n^2-8 n p^3 \right.\\
   & & \left. +32 n p^2+48
   n p-80 n+8 p^3-40 p^2+8 p+24\right)\\
   & & = 4 (n-p-1)\left((n-2 p)^4+2 (7 p-6) (n-2 p)^3+66
   (p-1)^2 (n-2 p)^2 \right. \\
   & & \left. +8 (p-1) \left(15 p^2-29 p+10\right) (n-2
   p)+8 (p-1) (3 p-1)
   \left(3 p^2-7 p+3\right) \right) > 0. 
         \end{eqnarray*}
Therefore,  the function $T(x_4)$ is concave up for   $x_4 \geq 2(p-1)/n$. 

\bigskip

\begin{minipage}{.38\linewidth}
\begin{center}
\includegraphics[width=0.99\linewidth]{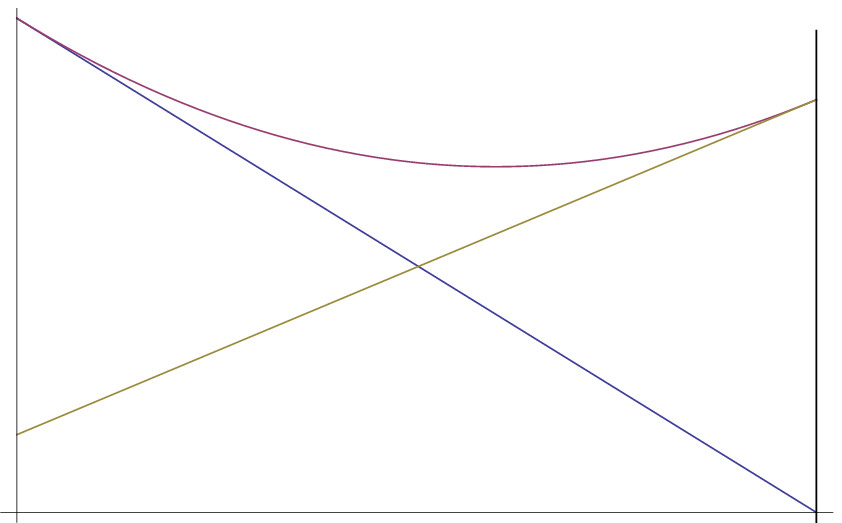}
\end{center}
$x_4 = \beta$ \hspace{85pt} $x_4 = t_0$ 
\end{minipage}
\quad 
\begin{minipage}{0.6\linewidth}
Consider the tangent line $l_1$ of the curve $T(x_4)$ at $x_4 = \beta$, which intersects $x$-axis at a point $t_0 $,
and  the tangent line $l_2$ of the curve $T(x_4)$ at $x_4 = t_0$.
Since  $\displaystyle \frac{d T}{d x_4 }(\beta) < 0$ and $\displaystyle \frac{d T}{d x_4 }(t_0) > 0$, 
the tangent lines $l_1, l_2$  intersect at a point $ (x_0, y_0)$ with $ y_0 > 0$. 
Since $T(x_4)$ is concave up,   we see that the curve $ ( x_4, T(x_4) ) $ $(\beta \leq x_4 \leq t_0 )$ lies inside the triangle given by the
three points $( \beta, T( \beta)$, $ (x_0, y_0)$ and $( t_0, T(t_0) )$. \end{minipage}

\medskip
 Since the point $(u_3, T(u_3) )$ is inside of this  triangle, it follows that the local minimum  $ T(u_3)$  is  greater than $y_0 > 0$,
 and the claim has also been shown in this case.

\bigskip

\underline{STEP 2.}
{ \bf   { \boldmath We consider the case that  $n \geq 4$ and $n-2 \geq p > n/2$. }}

\medskip

This reduces to case STEP 1 as follows.

We consider   the resultant of $ F(x_2, x_4)$ and  $ G(x_2, x_4)$ with respect to $x_4$,  which is a polynomial of $x_2$  (instead of $x_4$),   and  we denote  this resultant by $R(x_2)$.  By factorizing $R(x_2)$ we have that 
  \begin{eqnarray*} 
   & & 
R(x_2) = 128 (n-1)^6 (p-1)^4 (n-p-1)^2{x_2}^8 (n{x_2}-2 n-2 p+2) (n{x_2}-2 n+2 p+2)\times \\
 & &    (3 n{x_2}-2 n-2 p{x_2}+2 p-2{x_2}+2) (n
  {x_2}-2 n+2 p{x_2}-2 p-2{x_2}+2) \\
  & & \left(3 n^5 {x_2}^4-20 n^5{x_2}^3+48 n^5{x_2}^2-48 n^5{x_2}+16
   n^5+3 n^4 p{x_2}^4+12 n^4 p{x_2}^3-110 n^4 p
  {x_2}^2 \right.\\ 
& &  +200 n^4 p{x_2}-104 n^4 p-10 n^4{x_2}^4+72 n^4
  {x_2}^3-178 n^4{x_2}^2+168 n^4{x_2}-40 n^4+24 n^3 p^2
  {x_2}^3 \\
  & &  +12 n^3 p^2{x_2}^2-248 n^3 p^2{x_2}+256 n^3
   p^2-7 n^3 p{x_2}^4-44 n^3 p{x_2}^3+380 n^3 p
  {x_2}^2-640 n^3 p{x_2} \\
  & &  +224 n^3 p+11 n^3{x_2}^4-92 n^3
  {x_2}^3+232 n^3{x_2}^2-200 n^3{x_2}+32 n^3-8 n^2 p^3
  {x_2}^3+84 n^2 p^3{x_2}^2 \\
 & &   +48 n^2 p^3{x_2}-296 n^2
   p^3-48 n^2 p^2{x_2}^3-92 n^2 p^2{x_2}^2+736 n^2 p^2
  {x_2}-440 n^2 p^2+4 n^2 p{x_2}^4 \\
 & &   +56 n^2 p{x_2}^3-444
   n^2 p{x_2}^2+656 n^2 p{x_2}-152 n^2 p-4 n^2{x_2}^4+48
   n^2{x_2}^3-124 n^2{x_2}^2 \\ 
   & &  +96 n^2{x_2}-8 n^2-32 n p^4
  {x_2}^2+80 n p^4{x_2}+160 n p^4+8 n p^3{x_2}^3-120 n
   p^3{x_2}^2-256 n p^3{x_2} \\
   & &  +352 n p^3+24 n p^2
  {x_2}^3+120 n p^2{x_2}^2-576 n p^2{x_2}+224 n p^2-24 n
   p{x_2}^3+200 n p{x_2}^2\\
   & &   -256 n p{x_2}+32 n p-8 n
  {x_2}^3+24 n{x_2}^2-16 n{x_2}-32 p^5{x_2}-32
   p^5+32 p^4{x_2}^2-96 p^4\\
   & &  \left.+32 p^3{x_2}^2+128 p^3{x_2}-96
   p^3-32 p^2{x_2}^2+128 p^2{x_2}-32 p^2-32 p{x_2}^2+32 p
  {x_2}\right).
    \end{eqnarray*} 
    
    We denote by $S(x_2)$ the factor  of degree $4$ in the above factorization. Then 
  we can write 
     \begin{eqnarray*} 
   & & S(x_2) =  (n-1) n^2 (3 n-4)(n+p-1){x_2}^4 \\
   & & -4 (n-1) n (n+p-1) \left(5 n^2-8 n p-8 n+2 p^2+8
   p+2\right){x_2}^3 \\
   & & +2 \left(24 n^5-55 n^4 p-89 n^4+6 n^3 p^2+190 n^3 p+116
   n^3+42 n^2 p^3-46 n^2 p^2-222 n^2 p \right. \\
   & & \left. -62 n^2-16 n p^4-60 n p^3+60 n
   p^2+100 n p+12 n+16 p^4+16 p^3-16 p^2-16 p\right){x_2}^2
   \\
   & & -8(n-2 p) (n-p-1) (2 n-p-1) \left(3 n^2-2 n p-6
   n-2 p^2+4 p+2\right){x_2}\\
   & &  +8 (n-2 p)^2
   (n-p-1)^2 (2 n-p-1). 
    \end{eqnarray*}
    
    If we  replace  $p$ with $ n-p$ in the polynomial $S(x_2)$,  we get exactly  the same polynomial as $T(x_2)$, and thus we see that the equation $S(x_2) = 0 $ has no positive solutions for  $n-2 \geq p > n/2$. 
  
 \end{proof}

\noindent
The Main Theorem now follows from Propositions \ref{solutions} and \ref{kahler}.

\section{The isometry problem}
In this section we  study  the isometry problem for the new homogeneous Einstein metrics of $M=SO(2n)/U(p)\times U(n-p)$, corresponding to the pairs $(n, p)$ which are presented in the Main Theorem.  Recall that  when   $n=2p$, it was proved in \cite{Chry3} that the  non-K\"ahler   homogeneous Einstein metrics of the form $g=(1, x_2, 1, x_2)$, where $x_2$  is  given by  part $(a)$ of (\ref{400}),   are   not isometric.  However for the special case of $2\leq p\leq 6$, the isometry problem for the remaining two new Einstein metrics $g=(1, x_2, 1, x_4)$, where $x_2$ and $x_4$ are determined by part $(b)$ of   (\ref{400}), and $(\ref{3})$  respectively, has not been studied yet.\footnote{Note that  the first two   non-K\"ahler  Einstein metrics on $M=SO(4p)/U(p)\times U(p)$, were obtained in \cite[Theorem.~8]{Chry3} with respect to the normalization $g=(x_1, 1, x_1, 1)$. For the special case   $2\leq p\leq 6$ the  new  Einstein metrics are given with respect to the normalization $g=(x_1, 1, x_1, x_4)$.}   

    Let us recall the method used in \cite{Chry3}.  For any $G$-invariant Einstein metric $g=(x_1, x_2, x_3, x_4)$ on  $M=SO(2n)/U(p)\times U(n-p)$ (with  $2\leq p\leq n-2$)
 we determine a (normalized) scale invariant given by $H_{g}=V_{g}^{1/d}S_g$, where
  $S_g$ is the scalar curvature of the given metric $g$, 
   $V_{g}=\prod_{i=1}^{4}x_{i}^{d_i}$ is the volume of
   $g$ and $d=\sum_{i=1}^{4}d_{i}=\dim M$. In particular,
 the   scalar curvature of $g$ is given by 
\[
S_g=\frac{1}{2}\sum_{i=1}^{4}\frac{d_i}{x_i} - \frac{[123]}{2}(\frac{x_1}{x_2x_3} + \frac{x_2}{x_1x_3} + \frac{x_3}{x_1x_2}) - 
 \frac{[134]}{2}(\frac{x_1}{x_3x_4} + \frac{x_3}{x_1x_4}+\frac{x_4}{x_1x_3})
 \]
where $d_{i}$ and $[123]$, $[234]$ are given in Section 2. Note that $S_{g}$  is a homogeneous  polynomial of degree $-1$  on the variables $x_{i}$, and the volume $V_{g}$ is a  monomial of degree $d$. 
 Thus  $H_{g}=V_{g}^{1/d}S_g$ is a homogeneous polynomial of degree 0, and it is invariant under a common scaling of the variables $x_i$.  If two metrics are isometric then they have the same scale invariant, so if
  the scale invariants $H_{g}$ and $H_{g'}$ 
 are different, then the metrics $g$ and $g'$ can not  be isometric. 
If $H_{g}=H'_{g}$
 we can not draw an immediate decision and 
  conclude if the metrics $g$ and $g'$ are isometric or not. 
  Finally, K\"ahler-Einstein metrics which correspond to 
   equivalent invariant complex structures on $M$ are isometric (cf. \cite{Chry3}).

In order to detect which pairs of Einstein metrics in the Main Theorem are isometric or not, first we need to give their approximate values.  Note that the non-K\"ahler Einstein metrics   are of the form $g=(1, x_2, 1, x_4)$, where $x_2$ is obtained by solving    equation $H_{n,p}(x_{2})=0$ (see $(\ref{4})$),  if we first substitute the corresponding values of $n$ and $p$.  Next, $x_4$ is easily obtained from (\ref{3}). In the following table we present   the case of $n\neq 2p$.  

\medskip
{\footnotesize
\begin{center}
{\bf Table 1} {\small Approximate values of Einstein metrics on $M$ for   pairs $(n, p)$ with $n\neq 2p$}
\end{center}
\smallskip
  \begin{center}
\begin{tabular}{lllll}
 Pair & Einstein metrics \\
\hline 
\smallskip
$(n, p)$ & $g_1=(1, x_2, 1, x_4)$ & $g_2=(1, x_2, 1, x_4)$ & $g_3=(1, x_2, 1, x_4)$ & $g_4=(1, x_2, 1, x_4)$ \\
 \thickline 
$(7, 4)$ & $(1, 0.4661, 1, 0.7256)$ & $(1, 0.6614, 1, 1.7636)$ & $(1, 1.4144, 1, 1.3999)$ & $(1, 1.5722, 1, 1.0631)$ \\ 
$(7, 3)$ & $(1, 0.7256, 1, 0.4661)$ & $(1, 1.7636, 1, 0.6614)$ & $(1, 1.3999, 1, 1.4144)$ & $(1, 1.0631, 1, 1.5722)$\\
$(6, 4)$ & $(1, 0.2680, 1, 0.8876)$ & $(1, 0.3631, 1, 1.9057)$ & $(1, 1.3782, 1, 1.5645)$ & $(1, 1.5461, 1, 1.1658)$\\
$(6, 2)$ & $(1, 0.8876, 1, 0.2680)$ & $(1, 1.9057, 1, 0.3631)$ & $(1, 1.5645, 1, 1.3782)$ & $(1, 1.1658, 1, 1.5461)$\\
$(5, 3)$ & $(1, 0.3241, 1, 0.6954)$ & $(1, 0.4361, 1, 1.8876)$ & $(1, 1.4331, 1, 1.5883)$ & $(1, 1.6922, 1, 0.8952)$\\
$(5, 2)$ & $(1, 0.6954, 1, 0.3241)$ & $(1, 1.8876, 1, 0.4361)$ & $(1, 1.5883, 1, 1.4331)$ & $(1, 0.8952, 1, 1.6922)$\\
\hline
\end{tabular}
\end{center}}

\medskip
Note that  the values $(7, 4)$ and  $(7, 3)$, $(6, 4)$ and $(6, 2)$, $(5, 3)$ and $(5, 2)$, determine the quotients
{\footnotesize{
\[
 \begin{tabular}{ll}
 $M_1=SO(14)/U(4)\times U(3)$, & $M^{1}=SO(14)/U(3)\times U(4)$, \\
$M_2= SO(12)/U(4)\times U(2)$, & $M^{2}= SO(12)/U(2)\times U(4)$, \\ 
$M_3= SO(10)/U(3)\times U(2)$, & $M^{3}= SO(10)/U(2)\times U(3)$,
\end{tabular}
\]
}}

\smallskip
\noindent respectively. In particular, as we can see from Table 1, the Einstein metrics on  $M^{i}$ are obtained from the Einstein metrics on    $M_{i}$, by a permutation of the components $x_2, x_4$, for any $i=1, 2, 3$, and conversely.\footnote{In general, the flag manifolds  $SO(2n)/U(n-p)\times U(p)$ and $SO(2n)/U(p)\times U(n-p)$ are
isometric via an element of the Weyl group of $G$.}  Thus  we obtain the isometries $M_1\cong M^{1}$, $M_{2}\cong M^{2}$ and $M_{3}\cong M^{3}$. This result is also obtained from  Table 2, where we give the values of the corresponding scale invariants for the Einstein metrics $g_1, g_2, g_3$ and $g_4$.

 Also, from Table 2 we  easily conclude that all non K\"ahler  invariant Einstein metrics on   $M_1\cong M^{1}$, $M_2\cong M^2$ and $M_3\cong M^{3}$  are   not isometric, since for any case it is $H_{g_1}\neq H_{g_2}\neq H_{g_3}\neq H_{g_4}$.  This  completes the examination of the case $n\neq 2p$.

 \medskip
{\footnotesize
\begin{center}
{\bf Table 2} {\small The values of the corresponding scale invariants}
\end{center}
 \smallskip
  \begin{center}
\begin{tabular}{lllllll}
   \hline 
\smallskip
Scale invariants  & $(7, 4)$ & $(7, 3)$ & $(6, 4)$ & $(6, 2)$ & $(5, 3)$  & $(5, 2)$\\
  \thickline
$H_{g_1}$ & $25.2814$ & $25.2814$ & $17.9698$ & $17.9698$  & $12.4373$ & $12.4373$\\
$H_{g_2}$ & $25.5264$ & $25.5264$ & $18.1243$ & $18.1243$ & $12.6088$ & $12.6088$\\
$H_{g_3}$ & $25.6020$ & $25.6020$ & $18.2540$ & $18.2540$ & $12.7050$ & $12.7050$\\
$H_{g_4}$ & $25.5943$ & $25.5943$ & $18.2446$ & $18.2446$ & $12.6700$ & $12.6700$\\
\hline
\end{tabular}
\end{center}}

\medskip
For the special case $n=2p$ with $2\leq p\leq 6$,   the scale invariants corresponding to the   new non-K\"ahler Einstein metrics on $M=SO(4p)/U(p)\times U(p)$ given by $g=(1, x_2, 1, x_4)$, where $x_2$ and $x_4$ are determined by part $(b)$ of   (\ref{400}), and $(\ref{3})$, respectively, are equal.  However, for
  $$       x_2 = \frac{2 p (2 p-1)-\sqrt{2} \sqrt{-p \left(p^3-7 p^2+5 p-1\right)}}{p (3 p-1)},$$   $x_4$ is given by 
       $$ x_4 = \frac{2 p (2 p-1) + \sqrt{2} \sqrt{-p \left(p^3-7 p^2+5 p-1\right)}}{p (3 p-1)},$$ 
       and for
       $$       x_2 = \frac{2 p (2 p-1)+\sqrt{2} \sqrt{-p \left(p^3-7 p^2+5 p-1\right)}}{p (3 p-1)},$$   $x_4$ is given by 
       $$ x_4 = \frac{2 p (2 p-1) - \sqrt{2} \sqrt{-p \left(p^3-7 p^2+5 p-1\right)}}{p (3 p-1)}.$$
        Thus these two Einstein metrics on $M$  are isometric.    

\end{document}